\DeclareSymbolFont{cyrletters}{OT2}{wncyr}{m}{n}
\DeclareMathSymbol{\Sha}{\mathalpha}{cyrletters}{"58}
\DeclareMathSymbol{\Bcyr}{\mathalpha}{cyrletters}{"42}
\DeclareMathOperator{\Spec}{\mathrm{Spec}}
\theoremstyle{definition}
\newtheorem{thm}{Theorem}[section]
\newtheorem*{defi*}{Definition}
\newtheorem{prop}[thm]{Proposition}
\newtheorem*{prop*}{Proposition}
\newtheorem{lemma}[thm]{Lemma}
\newtheorem{cor}[thm]{Corollary}
\newtheorem*{cor*}{Corollary}
\newtheorem{rmk}[thm]{Remark}
\newtheorem{thmalpha}{Theorem}
\title{Kato-Kuzumaki's properties for function fields over higher local fields}
\author{ Felipe Gambardella \\ \vspace*{-1ex} \small \textit{Centre de Mathematiques Laurent Schwartz - Ecole Polytechnique} \vspace*{1ex}\\ \texttt{felipe.gambardella@polytechnique.edu}}
\date{}
\begin{document}
\maketitle

\begin{abstract}
    Let $k$ be a $d$-local field such that the corresponding $1$-local field $k^{(d-1)}$ is a $p$-adic field and $C$ a curve over $k$. Let $K$ be the function field of $C$. We prove that for each $n,m \in \mathbf{N}$, and hypersurface $Z$ of $\mathbf{P}^n_K$ with degree $m$ such that $m^{d+1} \leq n$, the $(d+1)$-th Milnor $\mathrm{K}$-theory group is generated by the images norms of finite extension $L$ of $K$ such that $Z$ admits an $L$-point. Let $j \in \{1,\cdots , d\}$. When $C$ admits a point in an extension $l/k$ that is not $i$-ramified for every $i \in \{1, \cdots, d-j\}$ we generalise this result to hypersurfaces $Z$ of $\mathbf{P}_K^n$ with degree $m$ such that $m^{j+1} \leq n$. \par
    In order to prove these results we give a description of the Tate-Shafarevich group $\Sha^{d+2}(K,\mathbf{Q}/\mathbf{Z}(d+1))$ in terms of the combinatorics of the special fibre of certain models of the curve $C$.
\end{abstract}

\section{Introduction}
        In \cite{KK1986DimensionOfFields} Kato and Kuzumaki proposed a set of conjectures that had as objective giving a Diophantine chatacterisation of the cohomological dimension of a field. These conjectures are stated using a variant of the $C_i$ property involving Milnor $\mathrm{K}$-theory and hypersurfaces of small degree. They expected that these properties would chatacterise the fields of small cohomological dimension. \par 
        More precisely, fix a pair of integers $i,q \geq 0$ and a field $k$. For any finite extension $l/k$ Milnor $\mathrm{K}$-theory admits a norm map $\mathrm{N}_{l/k}: \mathrm{K}_q(l) \to \mathrm{K}_q(k)$, see \cite[\S 7.3]{GilleSzamuely2017CentralSimple}. For a $k$-variety $Z$ Kato and Kuzumaki introduced its norm group $\mathrm{N}_q(Z/k)$ generated by $\mathrm{N}_{l/k}(\mathrm{K}_q(l))$ where $l/k$ ranges over all finite extensions such that $Z(l) \neq \emptyset$. They introduced the $C_i^q$ property: for every pair of integers $m, n\geq 1$, finite extension $k'/k$, and hypersurface $Z \subseteq \mathbf{P}^n_{k'}$ of degree $m$ with $m^i \leq n$ we have $\mathrm{K}_q(k') = \mathrm{N}_q(Z/k')$. For example, a field $k$ satisfies $C_i^0$ if and only if for every pair of integers $m,n \geq 1$, field extension $k'/k$, and hypersurface $Z$ in $\mathbf{P}^n_{k'}$ of degree $m$ such that $m^i \leq n$ the variety $Z$ admits a $0$-cycle of degree $1$. In the other extreme, a field $k$ satisfies $C_0^q$ if and only if for every tower $k''/k'/k$ of finite extensions the norm map $\mathrm{N}_{k''/k'}: \mathrm{K}_{q}(k'') \to \mathrm{K}_q(k')$ is surjective. \par
        Kato-Kuzumaki's conjecture states that for every triple of non-negative integers $n,i$ and $q$ such that $i+q =n$, a field $k$ has cohomological dimension at most $n$ if and only if it satisfies $C_i^q$.  In one direction this conjecture generalises Serre's question about whether the cohomological dimension of a $C_i$ field is at most $i$ \cite{Serre1965CohomologieGaloisienne}. In the original paper \cite{KK1986DimensionOfFields} the authors already pointed out that Bloch-Kato's conjecture (proved by Rost and Voevodsky \cite{Riou2014BlochKato}) implies that a field $k$ has cohomological dimension at most $q$ if and only if it satisfies $C^q_0$.\par 
        Unfortunately, there are counter-examples to the conjectures. In \cite{Merkurjev1991NotC20} a field of cohomological dimension $2$ that does not satisfy $C_2^0$ is constructed, and in \cite{CTMadore2004NotC10} we can find a characteristic zero field of cohomological dimension $1$ that does not satisfy $C_1^0$. Since both counterexamples are constructed through transfinite induction, it is reasonable to consider that Kato-Kuzumaki's conjecture is still open for fields naturally arising in arithmetic geometry. \par
        To the knowledge of the author the only known cases of the full Kato-Kuzumaki's conjecture are the finite field extensions of the fields $\mathbf{C}(x_1, \cdots, x_n)$ \cite[Theorem 2.2]{Diego2018ConjectureKatoKuzumaki}, $\mathbf{C}(x_1, \cdots, x_n)(\!(t)\!)$ \cite[Theorem 3.9]{Diego2018ConjectureKatoKuzumaki}, or $\mathbf{C}(\!(x)\!)(\!(y)\!)$ \cite[Example 4.8]{Wittenberg2015KKQp}. Some other particular cases are known, for example Wittenberg proved that $\mathbf{Q}_p$ and totally imaginary number fields satisfy $C_1^1$ in \cite[Corollaire 5.5 Théorème 6.1]{Wittenberg2015KKQp}. Moreover, a stronger variant of the $C_1^{d}$ property is known for $d$-local fields (such as $\mathbf{Q}_p(\!(t_2)\!) \cdots (\!(t_d)\!)$ or $\mathbf{F}_p(\!(t_1)\!) \cdots (\!(t_d)\!)$) if we ignore the residue characteristic, see \cite[Théorème 4.2 and Corollaire 4.7]{Wittenberg2015KKQp}. \par
        In \cite[\S 2]{Diego2018ConjectureKatoKuzumaki} we can find a different proof of the fact that totally imaginary number fields satisfy the $C_1^1$ property. The proof in \cite[\S 2]{Diego2018ConjectureKatoKuzumaki} relies on local-global principles for certain torsors of normic tori. This local-global approach was also fruitful for function fields of curves over $p$-adic fields. In \cite{DiegoLuco2024KKp-adicFunction} the authors exploit the arithmetic duality theorems for motivic cohomology over such fields (see \cite[Théorème 0.1]{Diego2015LGPrincipleHigherLocal}) to prove that function fields of $p$-adic curves satisfy the $C_2^2$ property \cite[Main Theorem 1]{DiegoLuco2024KKp-adicFunction}. Moreover, some partial results towards the $C_1^2$ property are also obtained, see \cite[Main Theorem 2]{DiegoLuco2024KKp-adicFunction}. Those arithmetic duality theorems are also valid over function fields of curves over fields of the form $k_0(\!(t_2)\!) \cdots (\!(t_d)\!)$ where $k_0$ is a $p$-adic field. In this article we extend the methods of \cite{DiegoLuco2024KKp-adicFunction} to such function fields. We obtain the following result which is analogous to \cite[Main Theorem 1]{DiegoLuco2024KKp-adicFunction}.
        \begin{thmalpha}\label{thm A}
            The field $\mathbf{Q}_p(\!(t_2)\!) \cdots (\!(t_d)\!)(x)$ (and its finite extensions) satisfies $C_{d+1}^{d+1}$.
        \end{thmalpha}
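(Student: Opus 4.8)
The plan is to deduce Theorem A as a special case ($j = d$) of the more general statement announced in the abstract, by establishing the relevant $C^{d+1}_{d+1}$-type property for the function field $K = \mathbf{Q}_p(\!(t_2)\!)\cdots(\!(t_d)\!)(x)$, which is the function field of the curve $C = \mathbf{P}^1_k$ over the $d$-local field $k = \mathbf{Q}_p(\!(t_2)\!)\cdots(\!(t_d)\!)$. Here $k^{(d-1)} = \mathbf{Q}_p$ is indeed a $p$-adic field, so the hypotheses of the abstract's main result apply. Following the strategy of \cite{DiegoLuco2024KKp-adicFunction}, the core is a local-global principle: for a hypersurface $Z \subseteq \mathbf{P}^n_K$ of degree $m$ with $m^{d+1} \le n$, I would show that the norm group $\mathrm{N}_{d+1}(Z/K)$ equals all of $\mathrm{K}_{d+1}(K)$. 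The first step is to translate the generation of the Milnor $\mathrm{K}$-group into a cohomological statement via the Galois symbol $\mathrm{K}_{d+1}(K)/m \to H^{d+1}(K, \mu_m^{\otimes(d+1)})$, using Bloch-Kato, so that surjectivity of norms becomes a question about vanishing of an obstruction class in a suitable cohomology group with coefficients $\mathbf{Q}/\mathbf{Z}(d+1)$.

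The second, and decisive, step is to invoke arithmetic duality over $K$. Since $K$ is a function field of a curve over a field of the form $k_0(\!(t_2)\!)\cdots(\!(t_d)\!)$ with $k_0$ $p$-adic, the duality theorems of \cite{Diego2015LGPrincipleHigherLocal} (Théorème 0.1) are available: there is a perfect pairing between motivic cohomology groups of $K$ and its completions, and the failure of the local-global principle is measured precisely by $\Sha^{d+2}(K, \mathbf{Q}/\mathbf{Z}(d+1))$. I would use the promised combinatorial description of this Tate-Shafarevich group in terms of the special fibre of a regular model $\mathcal{C}$ of $C$. The idea is that $\Sha^{d+2}(K,\mathbf{Q}/\mathbf{Z}(d+1))$ is controlled by the dual graph of the special fibre, and one shows that a class living in it, after base change to the finite extensions $L$ over which $Z$ acquires a point, becomes trivial. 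The numerical hypothesis $m^{d+1} \le n$ is what guarantees, via the $C_i$-property of the residue/local fields appearing in the combinatorial analysis (ultimately reducing through the tower of Laurent series fields down to $\mathbf{Q}_p$ and its residue field $\mathbf{F}_p$), that $Z$ has points in extensions of controlled degree, making the local norm groups surjective at each place.

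The main structural steps, in order, would be: (1) reduce from the field $K$ and general hypersurface to the vanishing of a class in $\Sha^{d+2}(K, \mathbf{Q}/\mathbf{Z}(d+1))$ paired against the relevant Milnor $\mathrm{K}$-symbols, using Bloch-Kato and the duality pairing; (2) establish the local statement at each completion $K_v$ of $K$ — these are themselves higher-local or quasi-local fields where a $C_{d+1}$-type result, analogous to Wittenberg's \cite[Théorème 4.2]{Wittenberg2015KKQp} for $(d+1)$-local fields, gives surjectivity of the local norm maps when $m^{d+1} \le n$; (3) assemble the local solutions into a global one by showing the obstruction in $\Sha^{d+2}$ vanishes, which is where the combinatorial model of the special fibre enters to compute or kill this group. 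The finite-extension stability in Theorem A follows formally, since any finite extension of $K$ is again a function field of a curve over $k$ (after replacing $k$ by a finite extension, which remains a $d$-local field with $p$-adic $(d-1)$-local field), so the same argument applies verbatim.

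The hard part will be step (3): controlling $\Sha^{d+2}(K, \mathbf{Q}/\mathbf{Z}(d+1))$ through the combinatorics of the special fibre. Unlike the classical $p$-adic curve case where the residue field is finite and the reciprocity obstruction is a single copy of $\mathbf{Q}/\mathbf{Z}$, here the higher-local structure introduces a genuinely multi-layered residue tower, so the dual-graph computation must track how cohomology classes propagate through each successive residue field. I expect the key technical obstacle to be proving that the combinatorially-defined group vanishes (or is annihilated by the norm maps from the extensions $L$) under the degree hypothesis — this requires a careful compatibility between the splitting behaviour of $Z$ at each component of the special fibre and the global reciprocity law governing $\Sha$, and is precisely the point where the passage from $j = d$ to smaller $j$ in the abstract's general statement demands the non-ramification hypotheses on the point of $C$.
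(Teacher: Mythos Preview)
Your broad architecture—translate into motivic cohomology, solve locally, and control the global defect via Poitou-Tate duality over $K$—matches the paper's strategy. However, there are two genuine gaps that would prevent your outline from closing.

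First, you have misidentified the obstruction group. The class whose vanishing is equivalent to $\mathrm{K}_{d+1}(K) = \langle \mathrm{N}_{d+1}(L/K), \mathrm{N}_{d+1}(Z/K)\rangle$ does \emph{not} live in $\Sha^{d+2}(K,\mathbf{Q}/\mathbf{Z}(d+1))$; it lives in $\Sha^{d+2}(K,T)$ where $T = \check{T}\otimes^{\mathbf{L}}\mathbf{Z}(d+1)$ and $\check{T}$ is the cocharacter module of the normic torus attached to an étale algebra $E = L \times K_1 \times \cdots \times K_r$ built from carefully chosen extensions $K_i$ in which $Z$ acquires a point (see Lemma~\ref{lemma norms Ktheory and motivic}). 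The Poitou-Tate pairing is between $\overline{\Sha^2(K,\hat{T})}$ and $\Sha^{d+2}(K,T)$, so one must bound the torsion of $\overline{\Sha^2(K,\hat{T})}$; this is done via a delicate restriction-corestriction argument (Proposition~\ref{prop horrible}) whose hypotheses are verified using the description of $\Sha^{d+2}(K,\mathbf{Q}/\mathbf{Z}(d+1))$. Crucially, the latter group is \emph{not} trivial: Theorem~\ref{thm computation of Sha} gives $\Sha^{d+2}(K,\mathbf{Q}/\mathbf{Z}(d+1)) \simeq (\mathbf{Q}/\mathbf{Z})^{r(C)}$, and what matters is its \emph{stability under purely unramified base change} (Remark~\ref{rmk invariance rC under tot nr} and Lemma~\ref{lemma torsion of kernel and surj shas}), not its vanishing.

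Second, you are missing the entire $\chi(Z,E)$-torsion framework. The local statement (Proposition~\ref{prop local solution 1}) does not give surjectivity of local norm maps outright; it gives that the local quotient is $\chi_K(Z,E)$-torsion. The global theorem (Theorem~\ref{thm chi2 torsion}) then shows the global quotient is $\chi_K(Z,E)^2$-torsion, and only \emph{after} this does the hypersurface hypothesis $m \le n$ enter, via $\chi_K(Z,\mathcal{O}_Z)=1$. The condition $m^{d+1}\le n$ is used not in the combinatorics of the special fibre but to find, by the $C_{d+1}$ property of $l^{pnr}K$, a single purely unramified extension $L'$ with $Z(L')\neq\emptyset$, so that $\mathrm{N}_{d+1}(L'/K)\subseteq \mathrm{N}_{d+1}(Z/K)$ and Theorem~\ref{thm chi2 torsion} applies. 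Your step~(3), as written, conflates the role of $\Sha^{d+2}(K,\mathbf{Q}/\mathbf{Z}(d+1))$ with that of $\Sha^{d+2}(K,T)$ and omits the globalisation of local extensions via Hilbertianity (Proposition~\ref{prop globalising local extensions}), which is what allows one to build the $K_i$ with the linear-disjointness properties needed for Proposition~\ref{prop horrible}.
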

        \noindent The proof follows a similar structure as the proof of \cite[Main Theorem 1]{DiegoLuco2024KKp-adicFunction}. The main technical addition is the computation of a certain Tate-Shafarevich group. More precisely, let $k$ be a field of the form $k_0(\!(t_2)\!) \cdots (\!(t_d)\!)$ where $k_0$ is a $p$-adic field and $C$ a smooth projective geometrically integral curve over $k$. Denote by $K$ the function field of $C$. We obtain the following generalisation of \cite[Corollary 2.9]{Kato1986Hasse2dim}
        \begin{thmalpha}\label{thm B}
         There is an isomorphism $\Sha^{d+2}(K,\mathbf{Q}/\mathbf{Z}(d+1)) \simeq \left(\mathbf{Q}/\mathbf{Z}\right)^{r(C)}$ for some integer $r(C)$ that can be explicitly described in terms of $C$ (see Section \ref{sec preliminary} for a precise definition). Moreover, this isomorphism is stable under purely unramified extension of the base field, i.e. extensions of the form $l_0K/K$ where $l_0/k_0$ is a finite unramified extension.
        \end{thmalpha}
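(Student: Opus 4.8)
The plan is to transport Kato's computation for two-dimensional global fields \cite{Kato1986Hasse2dim} to the higher-local setting, reducing $\Sha^{d+2}(K,\mathbf{Q}/\mathbf{Z}(d+1))$ to the Kato--Gersten complex of a regular model and reading off the answer from the dual graph of the special fibre. First I would let $\mathcal{O}$ be the rank-one (i.e. $t_d$-adic) valuation ring of $k$, whose residue field $k^{(1)} = k_0(\!(t_2)\!)\cdots(\!(t_{d-1})\!)$ is a $(d-1)$-local field, and choose a regular proper flat model $\mathcal{C}\to\Spec\mathcal{O}$ of $C$. After embedded resolution in dimension two I may assume the special fibre $Y=\mathcal{C}_s$ is a reduced strict normal crossings divisor; its irreducible components $\eta_i$ and their intersection points (nodes) form the dual graph $\Gamma$, and $r(C)$ is the first Betti number $\dim_{\mathbf{Q}}H_1(\Gamma,\mathbf{Q})$ of $\Gamma$ (equivalently, the total number of independent loops).

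The heart of the argument is the Kato complex of the two-dimensional regular scheme $\mathcal{C}$ with coefficients $\mathbf{Q}/\mathbf{Z}(d+1)$, namely $H^{d+2}(K,\mathbf{Q}/\mathbf{Z}(d+1)) \to \bigoplus_{x\in\mathcal{C}^{(1)}} H^{d+1}(\kappa(x),\mathbf{Q}/\mathbf{Z}(d)) \to \bigoplus_{x\in\mathcal{C}^{(2)}} H^{d}(\kappa(x),\mathbf{Q}/\mathbf{Z}(d-1))$, whose exactness at the relevant spots is provided by the duality of \cite{Diego2015LGPrincipleHigherLocal}. The codimension-two points are the nodes of $Y$, whose residue fields $\kappa(x)$ are finite over $k^{(1)}$, hence $(d-1)$-local, so that higher local class field theory furnishes an invariant isomorphism $H^{d}(\kappa(x),\mathbf{Q}/\mathbf{Z}(d-1))\simeq\mathbf{Q}/\mathbf{Z}$. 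The local triviality conditions defining $\Sha$ force the residues along the horizontal divisors to vanish and constrain the vertical residues along the $\eta_i$; passing to the second residue at each node I would show that a class of $\Sha^{d+2}(K,\mathbf{Q}/\mathbf{Z}(d+1))$ is encoded exactly by $\mathbf{Q}/\mathbf{Z}$-valued data attached to the nodes of $\Gamma$, compatible with the reciprocity relation (vanishing of the total residue) at each component.

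Unwinding these relations identifies $\Sha^{d+2}(K,\mathbf{Q}/\mathbf{Z}(d+1))$ with the first (co)homology of the reduction graph $\Gamma$ with $\mathbf{Q}/\mathbf{Z}$ coefficients, which for a finite graph of first Betti number $r(C)$ is $(\mathbf{Q}/\mathbf{Z})^{r(C)}$. I would organise the verification by induction on $d$, the $p$-adic case $d=1$ being Kato's \cite[Corollary 2.9]{Kato1986Hasse2dim}; at each stage the invariant map of the $(d-1)$-local residue fields supplies the $\mathbf{Q}/\mathbf{Z}$ coefficients and collapses the non-combinatorial (arithmetic) terms, while the cited duality keeps the complex exact. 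For the stability statement, an unramified extension $l_0/k_0$ yields an étale base change $\mathcal{O}_{l_0k}/\mathcal{O}$, so $\mathcal{C}$ base-changes to a regular model of the curve over $l_0K$ with an \emph{unchanged} special-fibre dual graph $\Gamma$; since the invariant maps are compatible with unramified corestriction, $r(C)$ and the isomorphism are preserved.

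The step I expect to be the main obstacle is showing that the local conditions annihilate precisely the non-graph (arithmetic) contributions and nothing more. Two points require care: controlling the $p$-primary part in the residue characteristic $p$, where $\mathbf{Q}/\mathbf{Z}(d+1)$ must be handled through logarithmic de Rham--Witt sheaves and Kato's theory rather than naive Tate twists; and verifying that the finite pieces stemming from the genera of the components and from the arithmetic of the $(d-1)$-local residue fields do not survive into the divisible group $(\mathbf{Q}/\mathbf{Z})^{r(C)}$. Establishing exactness of the Kato complex uniformly in $d$ over these higher-local bases is exactly where \cite{Diego2015LGPrincipleHigherLocal} does the decisive work.
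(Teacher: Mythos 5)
Your overall skeleton matches the paper's: take a regular proper flat model $\mathcal{C}$ of $C$ over the rank-one valuation ring $\mathcal{O}_k^{(1)}$ with strict normal crossing special fibre $Y$, identify $\Sha^{d+2}(K,\mathbf{Z}/m(d+1))$ with a kernel inside the Kato complex of $\mathcal{C}$, use the reciprocity/invariant isomorphisms at the codimension-two points to turn the residue conditions into combinatorics of the reduction graph, induct on $d$ with Kato's \cite[Corollary 2.9]{Kato1986Hasse2dim} as the base case, and deduce stability under purely unramified extensions from the invariance of the model under the étale base change. (One technical difference: the paper proves surjectivity onto the kernel via the comparison with the Bloch--Ogus complex and the localisation sequence, and injectivity via the Harbater--Hartmann--Krashen patching theorem, rather than via the duality of \cite{Diego2015LGPrincipleHigherLocal}.)

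However, there is a genuine error in your identification of the answer. You assert that $r(C)$ is the first Betti number of the dual graph $\Gamma$ and that the ``arithmetic'' contributions coming from the irreducible components of $Y$ ``do not survive into the divisible group.'' This is false for $d\geq 2$, and it is not a verification you could complete: each generic point $v$ of $Y$ has residue field $\kappa(v)$ a one-variable function field over the $(d-1)$-local field $k^{(1)}$, and the subgroup $\bigoplus_{v\in Y^{(0)}}\Sha^{d+1}(\kappa(v),\mathbf{Z}/m(d))$ of $\bigoplus_v \mathrm{H}^{d+1}(\kappa(v),\mathbf{Z}/m(d))$ automatically lies in the kernel of the residue maps to the nodes, hence injects into $\Sha^{d+2}(K,\mathbf{Z}/m(d+1))$. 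By the inductive hypothesis these groups are $(\mathbf{Z}/m)^{r(D_i)}$ and are nonzero in general; only the \emph{quotient} by them is computed by the graph and equals $(\mathbf{Z}/m)^{g(C)}$. Since the resulting extension of $\mathbf{Z}/m$-modules splits, the correct answer is $(\mathbf{Z}/m)^{r(C)}$ with the recursive definition $r(C)=g(C)+\sum_i r(D_i)$ from Section \ref{sec preliminary}, not the Betti number of $\Gamma$ alone (these agree only when $d=1$, where the components live over finite fields and contribute $r=0$). Correspondingly, the stability statement requires the recursive invariance of $r$ under purely unramified base change (Remark \ref{rmk invariance rC under tot nr}), not merely the invariance of the dual graph. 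A smaller point: your worry about logarithmic de Rham--Witt sheaves at the top level is unnecessary, since the residue field of $\mathcal{O}_k^{(1)}$ has characteristic zero, so every $m$ is invertible on $\mathcal{C}$; the residue characteristic only intervenes at the $d=1$ base case already handled by Kato.
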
 
        \noindent With Kato-Kuzumaki's conjecture in mind, one expects the field $K$ to satisfy $C_1^{d+1}$. In this direction we prove the following result.
        \begin{thmalpha}\label{thm C}
            Let $j \in \{0 ,\cdots, d\}$. Let $l/k$ be a finite extension such that for every $i \in \{1, \cdots d-j\}$ it is not $j$-ramified (see Section \ref{sec preliminary}) and $C(l) \neq \emptyset$. Then for every pair of integers $m,n \geq 1$ and hypersurface $Z$ in $\mathbf{P}^n_K$ of degree $m$ such that $m^{j+1} \leq n$ we have $\mathrm{N}_{d+1}(Z/K) = \mathrm{K}_{d+1}(K)$.
        \end{thmalpha}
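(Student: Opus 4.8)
The plan is to follow the duality-based strategy used for $p$-adic curves in \cite{DiegoLuco2024KKp-adicFunction}, feeding in Theorem~\ref{thm B} as the global input. I would begin with the usual preliminary reductions. Since intersecting $Z$ with a linear subspace $\mathbf{P}^{m^{j+1}}_K \subseteq \mathbf{P}^n_K$ produces a hypersurface $Z'$ of the same degree with $Z'(L)\neq\emptyset \Rightarrow Z(L)\neq\emptyset$, and hence $\mathrm{N}_{d+1}(Z'/K)\subseteq \mathrm{N}_{d+1}(Z/K)$, it suffices to treat the extremal case $n=m^{j+1}$. Since $\mathrm{N}_{d+1}(Z/K)$ is a subgroup of $\mathrm{K}_{d+1}(K)$, the goal becomes showing that the quotient $Q:=\mathrm{K}_{d+1}(K)/\mathrm{N}_{d+1}(Z/K)$ vanishes, which I would do by proving separately that $Q$ is divisible and that its divisible part is killed by the hypotheses on $C$.

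For divisibility, the key is local solubility over the completions $K_v$ of $K$ at the closed points of a regular model of $C$. These completions are again of higher-local type, and the degree hypothesis $m^{j+1}\le n$ together with their $C_i'$-type properties should give that $Z$ acquires a rational point over $K_v$, or at least over an extension of degree prime to any prescribed $\ell$; in particular $\mathrm{N}_{d+1}(Z_{K_v}/K_v)=\mathrm{K}_{d+1}(K_v)$ after completion. Using the Bloch–Kato isomorphism $\mathrm{K}_{d+1}(K)/n \xrightarrow{\sim} H^{d+1}(K,\mathbf{Z}/n(d+1))$ (\cite{Riou2014BlochKato}) and the compatibility of the Milnor norm with corestriction, this local information translates into the statement $Q/n=0$ for every $n$, i.e. $Q$ is divisible.

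To kill the resulting divisible group I would invoke the arithmetic duality theorems of \cite{Diego2015LGPrincipleHigherLocal}: the associated Poitou–Tate-type exact sequence identifies the obstruction to an element of $\mathrm{K}_{d+1}(K)$ that is everywhere a local norm being a global norm with (a subquotient of the dual of) $\Sha^{d+2}(K,\mathbf{Q}/\mathbf{Z}(d+1))$. By Theorem~\ref{thm B} this group is $(\mathbf{Q}/\mathbf{Z})^{r(C)}$, and the isomorphism is stable under the unramified base change $l_0K/K$. The hypothesis that $C$ admits a point over an extension $l/k$ that is not $i$-ramified for every $i\in\{1,\dots,d-j\}$ is precisely what is needed to force the combinatorial invariant $r(C)$ to vanish after such a base change: the corresponding closed point of (a model of) $C$ yields a multiplicity-one split component of the special fibre, so the explicit description of $r(C)$ in Section~\ref{sec preliminary} gives $r=0$, the duality obstruction disappears, and $Q=0$. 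Throughout one must track that the $p$-primary and prime-to-$p$ parts are handled compatibly, using that $Q$ is divisible to reassemble the conclusion over $K$ itself from the computation over $l_0K$.

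The main obstacle I anticipate is the local step: establishing the precise local norm-surjectivity over the completions $K_v$ in the regime $m^{j+1}\le n$, and in particular controlling the $p$-primary part, where the residue characteristic $p$ obstructs the naive application of Bloch–Kato and of the $C_i'$-property. A secondary difficulty is the bookkeeping between $K$ and $l_0K$: one must verify that the non-$i$-ramified hypotheses on $l/k$ translate exactly into the vanishing of $r(C)$ dictated by Theorem~\ref{thm B}, and that both $Q$ and the relevant $\Sha$ behave functorially under the base change so that triviality over the auxiliary extension descends to the desired equality $\mathrm{N}_{d+1}(Z/K)=\mathrm{K}_{d+1}(K)$.
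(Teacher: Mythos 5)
Your proposal diverges from the paper at two load-bearing points, and both are genuine gaps rather than alternative routes. First, the passage ``this local information translates into the statement $Q/n=0$'' is unjustified: knowing $\mathrm{N}_{d+1}(Z_{K_v}/K_v)=\mathrm{K}_{d+1}(K_v)$ for all $v$ only says that the image of $Q$ in the product of the local quotients vanishes; to get $Q/n=0$ you need \emph{injectivity} of $Q/n$ into that product, i.e.\ a local--global principle, whose obstruction is exactly the Tate--Shafarevich group you were planning to invoke only afterwards. Moreover the local input is not available in the form you want: the completions $K_v$ are higher-dimensional fields containing a $p$-adic layer and satisfy no $C_i$ property, so one cannot expect $Z(K_v)\neq\emptyset$ nor local norm surjectivity. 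The paper proves only a torsion statement locally --- the quotient of $\mathrm{K}_{d+1}(K_v)$ by the norms from a purely unramified extension together with $\mathrm{N}_{d+1}(Z/K_v)$ is $\chi(Z,E)$-torsion (Proposition~\ref{prop local solution 1}), by induction on $d$ via residues and models over $\mathcal{O}_k$ --- and the degree hypothesis $m^{j+1}\le n$ enters only at the very end, through $\chi_K(Z,\mathcal{O}_Z)=1$ when $m\le n$ and through Tsen--Lang applied to the \emph{global} field $k^{\le d-j}(C)$ to produce a splitting extension $l/k$ with controlled ramification.

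Second, the claim that the hypothesis on $l/k$ forces $r(C)=0$ is false: $r(C)$ is a sum of genera of reduction graphs down the residue tower and is not killed by the existence of a rational point (an elliptic curve with split multiplicative reduction has a point and $r=1$). Theorem~\ref{thm B} is used in the paper not to make $\Sha^{d+2}(K,\mathbf{Q}/\mathbf{Z}(d+1))$ vanish, but to control the restriction $\Sha^2(K,\mathbf{Z})\to\Sha^2(L,\mathbf{Z})$ for purely unramified $L/K$ via the stability $r(C_l)=r(C)$ (Lemma~\ref{lemma torsion of kernel and surj shas}). The group that must actually be killed is $\overline{\Sha^2(K,\hat{T})}$ for a module $\hat{T}$ built from finitely many auxiliary global extensions $K_i^{(v)}$ obtained by the Hilbertianity/globalisation step, which requires the linear-disjointness and surjectivity hypotheses of Proposition~\ref{prop horrible} and the whole Euler-characteristic bookkeeping of Theorems~\ref{thm chi2 torsion} and~\ref{thm iramchi torsion}, including the induction on the number of prime factors of the ramification index and the d\'evissage from smooth to general proper varieties. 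The ramification hypothesis of the statement enters through $i^{ram}_{\le d-j}(C)=1$ in Theorem~\ref{thm iramchi torsion}, not through the vanishing of $r(C)$.
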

        \noindent This theorem can be thought as a generalisation of \cite[Main Theorem 2]{DiegoLuco2024KKp-adicFunction}. In particular, the case $j=0$ gives the following corollary.
        \begin{cor*}
            Assume that $C$ admits a rational point. Then for every pair of integers $m,n\geq 1$ and hypersurface $Z$ in $\mathbf{P}^n_K$ of degree $m$ such that $m\leq n$ we have $\mathrm{N}_{d+1}(Z/K) = \mathrm{K}_{d+1}(K)$.
        \end{cor*}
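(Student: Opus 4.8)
The plan is to deduce the statement directly from Theorem~\ref{thm C} by specialising to $j=0$. With this choice the degree hypothesis $m^{j+1} \leq n$ becomes $m \leq n$, which is exactly the condition appearing in the corollary, and the conclusion $\mathrm{N}_{d+1}(Z/K) = \mathrm{K}_{d+1}(K)$ is literally the same. Hence it suffices to produce a single finite extension $l/k$ satisfying the hypotheses of Theorem~\ref{thm C} in the case $j=0$, namely that $C(l) \neq \emptyset$ and that $l/k$ is not $i$-ramified for every $i \in \{1, \cdots, d\}$.

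The natural candidate is the trivial extension $l = k$. By assumption $C$ admits a rational point, so $C(k) = C(l) \neq \emptyset$, which is the first required hypothesis. It then remains to check that $k/k$ is not $i$-ramified for every $i \in \{1, \cdots, d\}$. Since the trivial extension induces the identity on the tower of valuations and residue fields underlying the $d$-local structure of $k$, it cannot introduce ramification at any level, so it is unramified in every sense entering the definition of $i$-ramification given in Section~\ref{sec preliminary}. I would simply unwind that definition to confirm this. Granting it, Theorem~\ref{thm C} applies with $l=k$ and $j=0$ and yields the corollary.

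Consequently I do not anticipate any genuine obstacle: the only point requiring attention is the formal verification that the trivial extension is not $i$-ramified, which should be immediate from the definition, a base-field extension that does nothing introducing no ramification at any of the $d$ levels. All the substantive work, in particular the reduction to the Tate–Shafarevich computation of Theorem~\ref{thm B} and the accompanying local analysis, has already been carried out in the proof of the more general Theorem~\ref{thm C}, of which this corollary is the boundary case $j=0$.
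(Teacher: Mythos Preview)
Your proposal is correct and matches the paper's own reasoning: the paper states this corollary immediately after Theorem~\ref{thm C} with the remark ``In particular, the case $j=0$ gives the following corollary,'' and gives no further argument. Taking $l=k$ is the intended specialisation, and your verification that the trivial extension satisfies $e_i(k/k)=1$ for all $i$ (hence is not $i$-ramified for any $i\in\{1,\dots,d\}$) is exactly what is needed.
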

\section{Preliminaries and notation} \label{sec preliminary}
\subsection*{Milnor \texorpdfstring{$K$}{K}-Theory}
    Let $k$ be a field and $q$ a non-negative integer. We define the $q$-th Milnor $\mathrm{K}$-group of $k$ as follows: for $q=0$ we set $\mathrm{K}_0(k) = \mathbf{Z}$ and for $q \geq 1$ we set 
    \[\mathrm{K}_q(k) := \left(k^{\times} \right)^{\otimes q} \; \big/ \; \left\langle a_1 \otimes \cdots \otimes a_q \, | \, \exists i,j, \; i \neq j,\; a_i + a_j = 1 \right\rangle\]
    where the tensor product is over $\mathbf{Z}$. For $a_1, \cdots, a_q \in k^{\times}$ we denote by $\{a_1, \cdots, a_q\}$ the image of $a_1 \otimes \cdots \otimes a_q$ in $\mathrm{K}_q(k)$. Elements of this form are called \textit{symbols}. For any pair of non-negative integers $p,q$ there is a natural pairing
    \[\{\cdot, \cdot\}: \mathrm{K}_p(k) \times \mathrm{K}_q(k) \to \mathrm{K}_{p+q}(k)\]
    induced by the tensor pairing $\left(k^{\times} \right)^{\otimes p} \times \left(k^{\times} \right)^{\otimes q} \to \left(k^{\times} \right)^{\otimes (p+q)}$. \par
    Let $l/k$ be a finite extension. One can construct a norm homomorphism
    \[\mathrm{N}_{l/k}: \mathrm{K}_q(l) \to \mathrm{K}_q(k)\]
    satisfying the following properties
    \begin{itemize}
        \item For $q = 0$, the map $\mathrm{N}_{l/k}: \mathbf{Z}\to \mathbf{Z}$ is given by multiplication by $[l:k]$,
        \item For $q = 1$ the map $\mathrm{N}_{l/k}: \mathrm{K}_1(l) \to \mathrm{K}_1(k)$ coincides with the usual norm $l^{\times} \to k^{\times}$,
        \item For any pair of non-negative integers $p,q$, we have $\mathrm{N}_{l/k} (\{\alpha, \beta\}) = \{\alpha, \mathrm{N}_{l/k}(\beta) \}$ for $\alpha \in \mathrm{K}_p(k)$  and $\beta \in \mathrm{K}_q(l)$, and
        \item If $m$ is a finite extension of $l$ we have $\mathrm{N}_{m/k} = \mathrm{N}_{l/k} \circ \mathrm{N}_{m/l}$.
    \end{itemize}
    The construction can be found in \cite[Section 1.7]{Kato1980GeneralizationClassFieldK2} or \cite[Section 7.3]{GilleSzamuely2017CentralSimple}. \par 
    Let $K$ be a Henselian discrete valuation field with ring of integers $R$, maximal ideal $\mathfrak{m}$ and residue field $k$. Then for every strictly positive integer $q$ there exists a unique \textit{residue map}
    \[ \partial : \mathrm{K}_q(K) \to \mathrm{K}_{q-1}(k)\]
    such that for every uniformiser $\pi$ and units $u_2,\cdots u_q \in R^{\times}$ we have
    \[\partial(\{\pi, u_2, \cdots , u_q \}) = \{\overline{u_2}, \cdots , \overline{u_q}\}\]
    where $\overline{u_2}, \cdots , \overline{u_q}$ represent the images of $u_2,\cdots u_q$ in the residue field. \par
    We denote the kernel of $\partial: \mathrm{K}_q(K) \to \mathrm{K}_{q-1}(k)$ by $\mathrm{U}_q(K)$. It is generated by symbols of the form $\{u_1 , \cdots ,u_q \}$ where $u_1, \cdots ,u_q$ are units in $R$. One can define a specialisation map $s: \mathrm{U}_q(K) \to \mathrm{K}_{q}(k)$ characterised by $s(\{u_1 , \cdots ,u_q \}) = \{\overline{u_1}, \cdots , \overline{u_q}\}$, see \cite[Proposition 7.1.4]{GilleSzamuely2017CentralSimple}. Denote by $\mathrm{U}_q^1(K)$ the kernel of $s$. It is generated by symbols of the form $\{u, a_2, \cdots, a_q\}$ where $u$ belongs to $1 + \mathfrak{m}$ and $a_2,\cdots, a_q\in K^{\times}$, see \cite[Proposition 7.1.7]{GilleSzamuely2017CentralSimple}. Note that for every prime $\ell$ different from the characteristic of $k$ the group $\mathrm{U}_q^1(K)$ is $\ell$-divisible. \par
    Moreover, the residue map and specialisation map are compatible with the norm map. Indeed, if $L/K$ is a finite extension with ramification index $e$ and the residue field of $L$ is $l$, we have the following commutative diagrams 
    \begin{eqnarray*}
        \begin{tikzcd}
            \mathrm{K}_q(L)/ \mathrm{U}_q(L) \ar[r, "\sim" ', "\partial_L"]  \ar[d,"\mathrm{N}_{L/K}"]& \mathrm{K}_{q-1}(l) \ar[d,"\mathrm{N}_{l/k}"] & \mathrm{U}_q(L)/ \mathrm{U}_q^1(L) \ar[r, "\sim" ', "s_L"]  \ar[d,"\mathrm{N}_{L/K}"]& \mathrm{K}_{q}(l) \ar[d,"e \cdot \mathrm{N}_{l/k}"] \\
            \mathrm{K}_q(K)/ \mathrm{U}_q(K) \ar[r, "\sim" ', "\partial_K"] & \mathrm{K}_{q-1}(k)  & \mathrm{U}_q(K)/ \mathrm{U}_q^1(K) \ar[r, "\sim" ', "s_L"]  & \mathrm{K}_{q}(k)
        \end{tikzcd}
    \end{eqnarray*}
\subsection*{\texorpdfstring{$C_i^q$}{Kato Kuzumaki} properties}
    Let $k$ be a field and $i,q$ non-negative integers. Let $X$ be a $k$-scheme of finite type. We define the $q$\textit{-th norm group} of $X$ as 
    \[\mathrm{N}_q(X/k) = \left\langle \mathrm{N}_{k(x)/k}(\mathrm{K}_q(k(x))) \; | \; x \in X_{(0)}\right\rangle \subseteq \mathrm{K}_q(k)\]
    where $X_{(0)}$ denotes the set of closed points of $X$. When $l/k$ is a finite extension we write $\mathrm{N}_q(l/k)$ for $\mathrm{N}_{q}(\Spec (l) /k)$. The field $k$ is said to have the $C_i^q$ property if for every non-negative integers $n$ and $d$, finite extension $l/k$ and hypersurface $Z$ of $\mathbf{P}^n_l$ of degree $d$ with $d^i \leq n$ we have $\mathrm{N}_q(Z/l) = \mathrm{K}_q(l)$. \par 
    As it was already pointed out in the original article \cite{KK1986DimensionOfFields}, the Bloch-Kato conjecture (now the norm-residue isomorphism) the $C_0^q$ property is equivalent to having cohomological dimension at most $q$.
\subsection*{Motivic complexes}
    Let $k$ be a field and $i$ a non-negative integer. We denote by $z^i(k, -)$ Bloch's cycle complex defined in \cite{Bloch1986AlgCyclesKtheory}. We denote by $\mathbf{Z}(i)$ the étale motivic complex defined by the complex $z^i(k,-)[2i]$. The main properties that we use are the following.
    \begin{enumerate}
        \item[(1)] \textit{Nesterensko-Suslin-Totaro theorem:} There is a natural identification between $\mathrm{H}^i(k,\mathbf{Z}(i))$ and $\mathrm{K}_i(k)$.
        \item[(2)] \textit{Beilinson-Lichtenbaum conjecture:} $\mathrm{H}^{i+1}(k,\mathbf{Z}(i)) = 0$.
    \end{enumerate}
    The Nesterensko-Suslin-Totaro theorem was originally proved in \cite{NesterenkoSuslin1990, Totaro1992MilnorKandAlgK}. It can also be found in \cite[Theorem 5.1]{LectureNotesonMotivicCoh}. The Beilinson-Lichtenbaum conjecture is deduced from the Bloch-Kato conjecture in \cite{SuslinJoukhovitski2006NormVar ,GeisserLevine2000Kincharp}. The Bloch-Kato conjecture was proved in \cite{SuslinJoukhovitski2006NormVar,Voevodsky2011MotivicCohFiniteCoeff}. The implication from the Bloch-Kato conjecture to the Beilinson-Lichtenbaum conjecture can also be found in \cite[Lemma 1.6 and Theorem 1.7]{HaesemeyerWeibel2019NormResidueiso}.
\subsection*{Bloch-Ogus-Kato complexes}
    Let $\mathcal{X}$ be an excellent scheme and $m \in  \mathbf{Z}$ invertible in $\mathcal{X}$. Fix $n, s \in \mathbf{N}$. In this article we use two complexes associated to $\mathcal{X}$. The first one is the Kato complex constructed in \cite[\S 1]{Kato1986Hasse2dim}. The term in degree $j$ of this complex is $\bigoplus_{v\in\mathcal{X}^{(j)}} \mathrm{H}^{n-j}(\kappa(v), \mathbf{Z}/m(s-j))$ where $\mathcal{X}^{(j)}$ are the points of codimension $j$ of $\mathcal{X}$ and $\kappa(v)$ is the residue field of $\mathcal{X}$ at $v$. The differential
    \[ \bigoplus_{v\in\mathcal{X}^{(j)}} \mathrm{H}^{n-j}(\kappa(v), \mathbf{Z}/m(s-j))\xrightarrow{\partial} \bigoplus_{v\in\mathcal{X}^{(j+1)}} \mathrm{H}^{n-j-1}(\kappa(x), \mathbf{Z}/m(s-j-1)) \]
    are obtained by noramalisation and sum over the usual cohomological residues. For the details on the construction see \cite[\S 1]{Kato1986Hasse2dim} \par
    The second complex of interest is the Bloch-Ogus complex. The degree $j$ term of this complex is given by $\bigoplus_{v\in\mathcal{X}^{(j)}} \mathrm{H}^{n+j}_v(\mathcal{X}, \mathbf{Z}/m(s))$ where $\mathrm{H}^{n+j}_v(\mathcal{X}, \mathbf{Z}/m(s))$ is the colimit of the étale cohomology groups with support $\mathrm{H}^{n+j}_{\overline{\{v\}} \cap U}(U, \mathbf{Z}/m(s))$ when $U$ ranges over all open subsets of $\mathcal{X}$ containing $v$. The differentials
    \[ \bigoplus_{v\in\mathcal{X}^{(j)}} \mathrm{H}^{n+j}_v(\mathcal{X}, \mathbf{Z}/m(s))\xrightarrow{\partial} \bigoplus_{v\in\mathcal{X}^{(j+1)}} \mathrm{H}^{n+j+1}(\kappa(x), \mathbf{Z}/m(s))\]
    are obtained via the localiastion exact sequence and restriction. For more details on the construction see \cite[\S 1]{CTHooblerKahn1997BOGTheorem}. \par
    When the scheme $\mathcal{X}$ is regular, these two complexes get identify up to sign via the Gysin morphism \cite[Theorem 2.5.4]{JansenSaitoSato2014}. It is also possible to say something when the integer $m$ is not invertible in $\mathcal{X}$, see \cite[\S 3 and \S 4]{JansenSaitoSato2014}.
\subsection*{Higher local fields and function fields}
    Let $d$ be a non-negative integer. In this paper we define $0$-local fields to be finite fields and a $d$\textit{-local field} to be a complete discrete valuation field whose residue field is a $(d-1)$-local field. \par
    Let $k$ be a $d$-local field. For $i =0, \cdots , d$ we denote by $k^{(i)}$ the $i$-th residue field. Note that $k^{(i)}$ is a $(d-i)$ local field.  We denote by $\mathcal{O}_k^{(i)}$ the rank $i$ ring of integers of $k$, see \cite[Definition 3.3]{Morrow2012Higherlocal}. We omit the superindex i the case $i=d$, so $\mathcal{O}_k := \mathcal{O}_k^{(d)}$. This is a henselian valuation ring with residue field $k^{(d)}$ and fraction field $k$. \par
    Let $l/k$ be a finite extension, and $j \in \{ 0, \cdots , d \} $. We denote by $[l:k]_j$ the degree of the residual extension $l^{(d-j)}/k^{(d-j)}$ of $j$-local fields. If $j$ is non-zero, $e_j(l/k)$ denotes the ramification index of $l^{(d-j)}/k^{(d-j)}$, and $e_{\leq j}(l/k) = \prod_{i=1}^j e_i(l/k)$. In order to make the notation uniform, we also use the notation $e_0(l/k)$ for $[l:k]_0$. The extension $l/k$ is said to be 
    \begin{itemize}
    \item \textit{purely unramified} if for every $i \in \{1, \cdots, d\}$ we have $e_i(l/k) = 1$, or equivalently $[l:k] = [l:k]_0$,
    \item \textit{purely ramified} if $[l:k]_0=1$,
    \item $j$-\textit{ramified} if $e_j(l/k) \neq 1$,
    \item \textit{totally} $j$-\textit{ramified} if $e_j(l/k) = [l:k]$.
    \end{itemize}
    A system of parameters for $k$ is a sequence $t_1 , \cdots t_n \in k$ such that $t_i$ belongs to $\mathcal{O}^{(1)}_k$ for every $i \in \{1,\cdots ,d\}$, the element $t_n$ is a uniformiser for $\mathcal{O}^{(1)}_k$ and the classes $\overline{t_1} \cdots \overline{t_{d-1}} \in k^{(1)}$ form a system of parameters for $k^{(1)}$. \par 
    \noindent \textbf{We fix the following notation for the rest of the present article:} $k$ denotes a $d$-local field such that the $1$-local field $k^{(d-1)}$ is a $p$-adic field, $C$ is a smooth projective geometrically integral curve over $k$, and $K$ is the function field of $C$. \par
    For $j \in \{ 0, \cdots , d \}$ we define the $j$-th index of $C$ as 
    \[i_{\leq j}(C) = \mathrm{gcd}( \,[k':k]_j \, | \, C(k') \neq \emptyset \, ).\]
    In the case $j=0$ we denote $i_{\leq 0}(C)$ by $i_0(C)$. Similarly, we define the $j$-th ramification index of $C$ as 
    \[i_{\leq j}^{ram}(C) = \mathrm{gcd}( \, e_{\leq j}(k'/k) \, | \, C(k') \neq \emptyset \, ).\]
    \subsection*{Models of curves}
    
    Let $\mathcal{C}$ be a regular, projective flat model of $C$ over $\mathcal{O}_k^{(1)}$ such that the special fibre $Y$ is a strict normal crossing divisor. We define a bipartite graph $\Gamma(\mathcal{C}) = (V,E)$ called the \textit{reduction graph} of $\mathcal{C}$ where
    \begin{itemize}
        \item \textit{Vertices:} $V= Y^{(0)} \sqcup I(Y)$ where $Y^{(0)}$ is the set of generic points of $Y$ and $I(Y)$ is the set of closed points of $Y$ that belongs to more than one component.
        \item \textit{Edges:} $E$ is the set of pairs $\{v,  x\}$ where $v \in Y^{(0)}$, $x \in I(Y)$ and $x \in \overline{\{v\}}$.
    \end{itemize}
    Note that this graph is finite. We denote by $g(C)$ the genus of the reduction graph $\Gamma(\mathcal{C})$. This is independent of the chosen model because for two different models, the reduction graphs are homopoty equivalent, see \cite[Remark 6.1 (b)]{HHK2015LGtorsorsarithmeticcurves}. We define $r(\mathcal{C})$ by induction on $d$ as follows: for $d=0$ we set $r(C)=0$. For $d \geq 1$, let $D_1, \cdots, D_n$ be the irreducible components of the special fibre of $\mathcal{C}$. Note that for every $i \in \{1, \cdots, n\}$ the curve $D_i$ is defined over a $(d-1)$-local field. We set
    \[ r(C) := g(C) +\sum_{i=1}^n r(D_i).\]
    \begin{rmk} \label{rmk invariance rC under tot nr}
        Let $l/k$ be a finite purely unramified extension. Then $r(C_l) = r(C)$. Indeed, this can be checked by induction on $d$: if we choose a model $\mathcal{C}$ to compute $r(C)$, the scheme $\mathcal{C} \otimes_{\mathcal{O}_k}\mathcal{O}_{l}$ is still a suitable model to compute $g(C)$ and by induction for every irreducible component $D'$ of the special fibre of $\mathcal{C}$ we have $r(D_i) = r(D_{i,l_{d-1}})$.
    \end{rmk}
\subsection*{Poitou-Tate duality and reciprocity}
The field $K$ admits a \textit{higher reciprocity law}. We state the result in the following proposition for future reference.
\begin{prop} \label{prop higher recirpocity}
    For every $v \in C^{(1)}$ and $m \in \mathbf{Z}$ we have natural an isomorphisms $j_{m,v}: \mathrm{H}^{d+2}(K_v, \mathbf{Z}/m(d+1)) \to \mathbf{Z}/m$. We have an exact sequence
    \[ \mathrm{H}^{d+2}(K, \mathbf{Q}/ \mathbf{Z}(d+1)) \to \bigoplus_{v \in C^{(1)}} \mathrm{H}^{d+2}(K_v, \mathbf{Q}/ \mathbf{Z}(d+1)) \xrightarrow{\sum_{v\in C^{(1)}} j_v} \mathbf{Q}/ \mathbf{Z} \to 0 \]
    where $j_v$ correspond to the isomorphism $\mathrm{H}^{d+2}(K_v, \mathbf{Q}/\mathbf{Z}(d+1)) \to \mathbf{Q}/\mathbf{Z}$ obtain by taking the colimit of $j_{m,v}$ over $m$.
\end{prop}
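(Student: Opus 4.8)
**The plan is to prove the higher reciprocity law (Proposition \ref{prop higher recirpocity}) by combining the local duality isomorphisms at each place with a global analysis of the Kato/Bloch-Ogus complex of a suitable model of $C$.**

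The plan is to establish the proposition in two stages. First I would construct the local invariant isomorphisms $j_{m,v}$. For each closed point $v \in C^{(1)}$, the completion $K_v$ is a $(d+1)$-local field: it is a complete discretely valued field whose residue field $\kappa(v)$ is a finite extension of $k$, hence itself a $d$-local field with $p$-adic $1$-local field $\kappa(v)^{(d-1)}$. The standard local duality for higher local fields (iterating the residue maps of the Bloch-Ogus-Kato complex recalled above, together with the class-field-theoretic invariant for $p$-adic fields at the bottom) yields a canonical isomorphism $\mathrm{H}^{d+2}(K_v,\mathbf{Z}/m(d+1)) \xrightarrow{\sim} \mathbf{Z}/m$. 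The compatibility of these isomorphisms with the transition maps $\mathbf{Z}/m \hookrightarrow \mathbf{Z}/mm'$ as $m$ varies gives, by passing to the colimit, the invariant map $j_v \colon \mathrm{H}^{d+2}(K_v,\mathbf{Q}/\mathbf{Z}(d+1)) \to \mathbf{Q}/\mathbf{Z}$.

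Next I would prove exactness of the global sequence. The strategy is to choose a regular proper flat model $\mathcal{C}$ of $C$ over $\mathcal{O}_k^{(1)}$ with strict normal crossing special fibre, and to interpret the three terms of the sequence as (pieces of) the Kato complex
\[
\bigoplus_{w \in \mathcal{C}^{(0)}} \mathrm{H}^{d+3}(\kappa(w),\mathbf{Q}/\mathbf{Z}(d+2)) \to \bigoplus_{v \in \mathcal{C}^{(1)}} \mathrm{H}^{d+2}(\kappa(v),\mathbf{Q}/\mathbf{Z}(d+1)) \to \bigoplus_{x \in \mathcal{C}^{(2)}} \mathrm{H}^{d+1}(\kappa(x),\mathbf{Q}/\mathbf{Z}(d))
\]
associated to $\mathcal{C}$. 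The generic point of $\mathcal{C}$ has residue field $K$, contributing $\mathrm{H}^{d+2}(K,\mathbf{Q}/\mathbf{Z}(d+1))$; the codimension-one points $v$ of $\mathcal{C}$ lying in the generic fibre are precisely the closed points of $C$, and their completions give the local terms $\mathrm{H}^{d+2}(K_v,\mathbf{Q}/\mathbf{Z}(d+1))$; and the summand $\mathbf{Q}/\mathbf{Z}$ on the right arises from the (finite) residue field attached to a closed point of the special fibre, via the fully iterated invariant. The Gysin identification of the Kato and Bloch-Ogus complexes over the regular scheme $\mathcal{C}$ (\cite[Theorem 2.5.4]{JansenSaitoSato2014}) converts the problem of exactness into a statement about the Bloch-Ogus spectral sequence, where the needed vanishing and surjectivity follow from the arithmetic duality / Hasse-principle results of Kato for two-dimensional schemes (\cite{Kato1986Hasse2dim}) and their higher-local generalisations. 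Surjectivity of $\sum_v j_v$ onto $\mathbf{Q}/\mathbf{Z}$ then reflects the nonvanishing of the invariant at a single place, while exactness at the middle term encodes precisely the reciprocity relation $\sum_v j_v(\alpha_v) = 0$ for a global class.

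The main obstacle I anticipate is the bookkeeping needed to align the indexing of the Kato complex of $\mathcal{C}$ (whose terms run over all codimension strata, including the components of the special fibre) with the clean two-term sequence in the statement, and in particular isolating exactly the contribution of the generic fibre $C^{(1)}$ from the vertical components of $Y$. Controlling the residue characteristic, where $m$ need not be invertible on $\mathcal{C}$, is the delicate point: there one must invoke the non-invertible-coefficient version of the comparison (\cite[\S 3 and \S 4]{JansenSaitoSato2014}) and check that the $p$-primary part behaves compatibly, which is where most of the genuine work lies. Once the complex is correctly assembled and these comparisons are in place, exactness should follow formally from the known duality for higher-dimensional local fields and the purity of the Gysin maps.
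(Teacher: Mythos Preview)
The paper does not prove this proposition at all: immediately after the statement it simply records that the local isomorphism is \cite[Proposition 1.2]{ManhLinh2024ArithmeticsHomo} and that the exact sequence is deduced directly from \cite[Proposition 2.6]{Diego2015LGPrincipleHigherLocal}. So any comparison is between your from-scratch plan and a bare citation.

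Your plan is more ambitious, but as written it has a genuine mismatch. In the Kato complex you display for $\mathcal{C}$, the term at $\mathcal{C}^{(0)}$ is $\mathrm{H}^{d+3}(K,\mathbf{Q}/\mathbf{Z}(d+2))$, yet two lines later you say the generic point contributes $\mathrm{H}^{d+2}(K,\mathbf{Q}/\mathbf{Z}(d+1))$; these are different groups (the first even vanishes since $\mathrm{cd}(K)=d+2$). If instead you take the Kato complex with parameters $(n,s)=(d+2,d+1)$, the codimension-one term becomes $\bigoplus_{v\in\mathcal{C}^{(1)}}\mathrm{H}^{d+1}(\kappa(v),\mathbf{Q}/\mathbf{Z}(d))$, which is not the middle term of the proposition either: it sums over \emph{all} of $\mathcal{C}^{(1)}$, including the generic points of the special fibre, and it involves residue fields rather than completions. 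Likewise, the degree-two piece is a direct sum of one copy of $\mathbf{Q}/\mathbf{Z}$ for \emph{each} closed point of $Y$, not a single $\mathbf{Q}/\mathbf{Z}$; collapsing those into one copy via the sum map is exactly the reciprocity content you are trying to prove, so it cannot be read off the complex formally.

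The cleaner route, and the one underlying the cited reference, avoids the two-dimensional model entirely: work with the localisation sequence for the curve $C$ itself,
\[
\mathrm{H}^{d+2}(K,\mathbf{Q}/\mathbf{Z}(d+1)) \to \bigoplus_{v\in C^{(1)}} \mathrm{H}^{d+1}(\kappa(v),\mathbf{Q}/\mathbf{Z}(d)) \to \mathrm{H}^{d+3}(C,\mathbf{Q}/\mathbf{Z}(d+1)) \to 0,
\]
identify $\mathrm{H}^{d+2}(K_v,\mathbf{Q}/\mathbf{Z}(d+1))\simeq \mathrm{H}^{d+1}(\kappa(v),\mathbf{Q}/\mathbf{Z}(d))$ by the residue (your first stage is fine here), and then compute $\mathrm{H}^{d+3}(C,\mathbf{Q}/\mathbf{Z}(d+1))\simeq\mathbf{Q}/\mathbf{Z}$ via duality for curves over the $d$-local field $k$. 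This is what \cite{Diego2015LGPrincipleHigherLocal} packages. Your detour through the model $\mathcal{C}$ and the special fibre is not needed for this statement; that machinery is what the paper uses later, in Section~\ref{sec coh compu}, to compute $\Sha^{d+2}$, which is a different and harder question.
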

\noindent The isomorphism can be found in \cite[Proposition 1.2]{ManhLinh2024ArithmeticsHomo} and the exact sequence can be directly deduced from Proposition \cite[Proposition 2.6]{Diego2015LGPrincipleHigherLocal}. \par
Let $i$ be a non-negative integer and $\mathcal{C}$ a regular flat projective model of $C$ over $\mathcal{O}^{(1)}_k$. For a Galois module $M$ we define the Tate-Shafarevich groups as follows
\begin{align*}
    \Sha^i(K,M) & := \ker\left( \mathrm{H}^i(K,M) \to \prod_{v \in C^{(1)}} \mathrm{H}^i(K_v, M) \right) \\
    \Sha^i_{\mathcal{C}}(K, M) &:=\ker\left( \mathrm{H}^i(K,M) \to \prod_{v \in \mathcal{C}^{(1)}} \mathrm{H}^i(K_v, M) \right).
\end{align*}
We recall the Poitou-Tate duality for motivic complexes \cite[Théorème 0.1]{Diego2015LGPrincipleHigherLocal}. Let $\hat{T}$ be a finitely generated Galois module that is free as an abelian group. Set $\check{T} := \mathrm{Hom}_K (\hat{T} , \mathbf{Z})$ and $T:= \check{T} \otimes_K^{\mathbf{L}} \mathbf{Z}(d+1)$. Then there is a perfect pairing of finite groups 
\[ \overline{\Sha^2(K,\hat{T})} \times \Sha^{d+2}(K,T) \to \mathbf{Q} / \mathbf{Z}\]
where $\overline{\Sha^2(K,\hat{T})}$ denotes the quotient of $\Sha^2(K,\hat{T})$ by the maximal divisible subgroup . \par
Note that in the case $\hat{T} = \mathbf{Z}$ the Beilinson-Lichtenbaum conjecture together with Poitou-Tate duality imply that $\Sha^2(K,\mathbf{Z})$ is divisible. Moreover, by Shapiro's lemma, the group $\Sha^2(K,\mathbf{Z}[E/K])$ is also divisible for any étale algebra $E/K$.

\section{Cohomological computations}\label{sec coh compu}
    The objective of this section is to compute the group $\Sha^{d+2}(K, \mathbf{Z}/m(d+1))$ for every $m \in \mathbf{Z}$. We fix a projective regular flat model $\mathcal{C}$ of $C$ such that the special fibre $Y$ is a strict normal crossing divisor. The strategy is to reduce the computation to the corresponding Tate-Shafarevich group of the components of the special fibre $Y$ of $\mathcal{C}$ and proceed by induction.
    \begin{prop} \label{prop sha as kernel of residues}
        The residues maps induce an isomorphism
        \[ \Sha^{d+2}(K, \mathbf{Z}/m(d+1)) \simeq \ker \left( \bigoplus_{v \in Y^{(0)}} \mathrm{H}^{d+1}(\kappa(v), \mathbf{Z}/m(d)) \to \bigoplus_{x \in Y^{(1)}} \mathrm{H}^{d}(\kappa(x), \mathbf{Z}/m(d-1)) \right).\]
    \end{prop}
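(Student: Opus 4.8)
The plan is to interpret both sides as the degree-zero homology of Kato complexes and to connect them through the localisation sequence attached to the decomposition of $\mathcal{C}$ into its generic fibre $C$ and special fibre $Y$. First I would construct the map. For $v \in C^{(1)}$ the residue field $\kappa(v)$ is a finite extension of the $d$-local field $k$, hence itself $d$-local with $\mathrm{cd}(\kappa(v)) = d+1$; thus $K_v$ is $(d+1)$-local, the local residue $\mathrm{H}^{d+2}(K_v,\mathbf{Z}/m(d+1)) \to \mathrm{H}^{d+1}(\kappa(v),\mathbf{Z}/m(d))$ is an isomorphism (its defect is controlled by $\mathrm{H}^{d+2}(\kappa(v),-)=0$), and the global residue $\partial_v$ factors through restriction to $K_v$. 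Consequently $\alpha \in \mathrm{H}^{d+2}(K,\mathbf{Z}/m(d+1))$ lies in $\Sha^{d+2}$ if and only if $\partial_v\alpha = 0$ for every $v \in C^{(1)}$; that is, $\Sha^{d+2}(K,\mathbf{Z}/m(d+1))$ is exactly the degree-$0$ homology $\mathrm{H}^0(\mathrm{KC}(C))$ of the Kato complex $\mathrm{KC}(C)$ of $C$ with coefficients $\mathbf{Z}/m(d+1)$, whose terms are $\mathrm{H}^{d+2}(K,\mathbf{Z}/m(d+1))$ and $\bigoplus_{C^{(1)}}\mathrm{H}^{d+1}(\kappa(v),\mathbf{Z}/m(d))$. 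Restricting the remaining residue components, at the vertical points $v \in Y^{(0)}$, to this kernel produces a map $\Phi$ into $\bigoplus_{Y^{(0)}}\mathrm{H}^{d+1}(\kappa(v),\mathbf{Z}/m(d))$; since $\partial \circ \partial = 0$ and the horizontal residues already vanish, $\Phi(\alpha)$ lands in the kernel of $\bigoplus_{Y^{(0)}} \to \bigoplus_{Y^{(1)}}$, which is the target $\mathrm{H}^0(\mathrm{KC}(Y))$ of the statement.

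Next I would realise $\Phi$ homologically. Writing $\mathcal{C}^{(j)} = C^{(j)} \sqcup Y^{(j-1)}$, the points supported on the closed fibre form a subcomplex and those on the open generic fibre a quotient complex, yielding a short exact sequence of Kato complexes $0 \to \mathrm{KC}(Y)[-1] \to \mathrm{KC}(\mathcal{C}) \to \mathrm{KC}(C) \to 0$, where the Kato complex of $Y$ carries coefficients $\mathbf{Z}/m(d)$ because passing from $\mathcal{C}$ to $Y$ lowers codimension and twist by one. By construction the connecting homomorphism $\mathrm{H}^0(\mathrm{KC}(C)) \to \mathrm{H}^0(\mathrm{KC}(Y))$ agrees, up to sign, with $\Phi$, so the long exact sequence gives $0 \to \mathrm{H}^0(\mathrm{KC}(\mathcal{C})) \to \Sha^{d+2}(K,\mathbf{Z}/m(d+1)) \xrightarrow{\Phi} \mathrm{H}^0(\mathrm{KC}(Y)) \to \mathrm{H}^1(\mathrm{KC}(\mathcal{C}))$. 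It therefore suffices to prove the vanishing of the two lowest Kato-homology groups of the arithmetic surface, $\mathrm{H}^0(\mathrm{KC}(\mathcal{C})) = \mathrm{H}^1(\mathrm{KC}(\mathcal{C})) = 0$.

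For these I would use the Gysin identification of the Kato and Bloch--Ogus complexes of the regular scheme $\mathcal{C}$ \cite{JansenSaitoSato2014}, so that $\mathrm{H}^j(\mathrm{KC}(\mathcal{C})) \cong \mathrm{H}^j_{\mathrm{Zar}}(\mathcal{C},\mathcal{H}^{d+2})$ for the Zariski sheaf $\mathcal{H}^{d+2} = \mathcal{H}^{d+2}(\mathbf{Z}/m(d+1))$, together with the coniveau spectral sequence $\mathrm{H}^p_{\mathrm{Zar}}(\mathcal{C},\mathcal{H}^q) \Rightarrow \mathrm{H}^{p+q}_{\mathrm{et}}(\mathcal{C},\mathbf{Z}/m(d+1))$ \cite{CTHooblerKahn1997BOGTheorem}. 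Since $\mathcal{O}_k^{(1)}$ is a complete, hence Henselian, discrete valuation ring and $\mathcal{C}$ is proper, proper base change gives $\mathrm{H}^i_{\mathrm{et}}(\mathcal{C}) \cong \mathrm{H}^i_{\mathrm{et}}(Y)$; and $Y$ is a curve over the field $k^{(1)}$ with $\mathrm{cd}(k^{(1)}) = d$, so $\mathrm{cd}(Y) \le d+2$. As $\mathcal{C}$ has Krull dimension $2$, all differentials near the relevant spots vanish, and $\mathrm{H}^1(\mathrm{KC}(\mathcal{C})) = E_2^{1,d+2}$ is a subquotient of $\mathrm{H}^{d+3}_{\mathrm{et}}(Y) = 0$.

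The vanishing of $\mathrm{H}^0(\mathrm{KC}(\mathcal{C}))$ is the step I expect to be the main obstacle. Here I would argue that an everywhere-unramified class must die, by inspecting the closed points $x \in \mathcal{C}^{(2)} = Y^{(1)}$: there $\kappa(x)$ is $(d-1)$-local, so $\mathrm{cd}(\kappa(x)) = d$, and by purity over the two-dimensional regular local ring $\mathcal{O}_{\mathcal{C},x}$ together with Henselian invariance the stalk $\mathcal{H}^{d+2}_x$ is computed by $\mathrm{H}^{d+2}(\kappa(x),\mathbf{Z}/m(d+1)) = 0$; a global section of $\mathcal{H}^{d+2}$ whose germ vanishes at every closed point vanishes on a nonempty open, hence is $0$ in $\mathrm{H}^{d+2}(K,\mathbf{Z}/m(d+1))$. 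The two delicate points I anticipate are reconciling the Zariski and Henselian stalks in this final step, and controlling the $p$-primary part of the coefficients, where $p$ is the residue characteristic; for the latter I would invoke the extension of the Bloch--Ogus--Kato formalism to non-invertible $m$ through logarithmic de Rham--Witt sheaves developed in \cite{JansenSaitoSato2014}.
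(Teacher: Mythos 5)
Your set-up and your surjectivity argument are sound and essentially a repackaging of the paper's proof: identifying $\Sha^{d+2}(K,\mathbf{Z}/m(d+1))$ with the degree-zero homology of the Kato complex of $C$ (using $\mathrm{H}^{d+2}(\kappa(v),\cdot)=0$ for $v\in C^{(1)}$), splitting the Kato complex of $\mathcal{C}$ into the piece supported on $Y$ and the piece on $C$, and reducing everything to the vanishing of the two lowest Kato-homology groups of $\mathcal{C}$ is exactly what the diagram \eqref{diag Cousin vs localisation} encodes. Your proof that $\mathrm{H}^1$ of the Kato complex of $\mathcal{C}$ vanishes (coniveau spectral sequence, then $\mathrm{H}^{d+3}_{\mathrm{et}}(\mathcal{C},\mathbf{Z}/m(d+1))\simeq\mathrm{H}^{d+3}_{\mathrm{et}}(Y,\cdot)=0$ by proper base change and cohomological dimension) is correct and arguably cleaner than the Leray argument in the text; it does give surjectivity. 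The worry about non-invertible coefficients is vacuous here: for $d\geq 2$ the residue field $k^{(1)}$ of $\mathcal{O}_k^{(1)}$ has characteristic $0$, and the case $d=1$ is quoted from Kato anyway.

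The genuine gap is exactly where you anticipated trouble, and it cannot be repaired along the lines you propose: the vanishing of $\mathrm{H}^0$ of the Kato complex of $\mathcal{C}$, i.e.\ the injectivity of $\Sha^{d+2}(K,\mathbf{Z}/m(d+1))\to\bigoplus_{v\in Y^{(0)}}\mathrm{H}^{d+1}(\kappa(v),\mathbf{Z}/m(d))$. Your argument needs two things that are not available: (i) the identification of this $\mathrm{H}^0$ with $\mathrm{H}^0_{\mathrm{Zar}}(\mathcal{C},\mathcal{H}^{d+2})$, which is a Gersten-type exactness statement; the Bloch--Ogus theorem of \cite{CTHooblerKahn1997BOGTheorem} applies to schemes smooth over a field, not to the arithmetic surface $\mathcal{C}$ over $\mathcal{O}_k^{(1)}$; and (ii) the computation of the stalk of $\mathcal{H}^{d+2}$ at a closed point $x$ by $\mathrm{H}^{d+2}(\kappa(x),\cdot)$, which holds for the \emph{Henselian} local ring $\mathcal{O}^h_{\mathcal{C},x}$ but not for the Zariski one, and the restriction $\mathrm{H}^{d+2}_{\mathrm{et}}(\mathcal{O}_{\mathcal{C},x},\cdot)\to\mathrm{H}^{d+2}_{\mathrm{et}}(\mathcal{O}^h_{\mathcal{C},x},\cdot)$ has no a priori injectivity. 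Nor can the coniveau spectral sequence substitute for this: $E_\infty^{0,d+2}$ is only a subquotient of $\mathrm{H}^{d+2}_{\mathrm{et}}(\mathcal{C},\cdot)\simeq\mathrm{H}^{d+2}_{\mathrm{et}}(Y,\cdot)$, which is nonzero in general. This injectivity is the one genuinely deep input of the proposition; the paper obtains it by observing that a class of $\Sha^{d+2}$ killed by all the residues at $Y^{(0)}$ is locally trivial at every point of $\mathcal{C}^{(1)}=C^{(1)}\sqcup Y^{(0)}$ and then invoking the Harbater--Hartmann--Krashen local--global principle \cite[Theorem 3.3.6]{HHK2014LGGaloisCohomology}, a field-patching theorem over semi-global fields. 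Some input of that strength (patching, or Kato's Hasse-principle machinery as in the $d=1$ case) has to enter at this point.
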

    \begin{proof}
        For ease of notation, we denote by $F(r)$ the Galois module $\mathbf{Z}/m(r)$ for every $r \in \mathbf{Z}$. For every $v \in Y^{(0)}$ we denote by $\partial_v:\mathrm{H}^{d+2}(K, F(d+1)) \to \mathrm{H}^{d+1}(\kappa(v), F(d))$ the cohomological residue. Note that $\partial_v$ factorises as 
        \[ \mathrm{H}^{d+2}(K, F(d+1)) \to \mathrm{H}^{d+2}(K_v, F(d+1)) \to \mathrm{H}^{d+1}(\kappa(v), F(d))\] 
        and the second arrow is injective because its kernel is isomorphic to $\mathrm{H}^{d+2}(\mathcal{O}_v, F(d+1)) \simeq \mathrm{H}^{d+2}(\kappa(v), F(d+1))$ which is trivial because $\kappa(v)$ has cohomological dimension $d+1$. Then the map
        \begin{equation}\label{eq residues injective} 
            \bigoplus_{v \in Y^{(0)}} \partial_{v}:  \Sha^{d+2}(K, F(d+1)) \to \bigoplus_{v \in Y^{(0)}} \mathrm{H}^{d+1}(\kappa(v), F(d)) 
        \end{equation}
        is injective thanks to \cite[Theorem 3.3.6]{HHK2014LGGaloisCohomology}. Moreover, the Kato complex of $\mathcal{C}$ gives rise to an anti-commutative square
        \begin{equation}\label{diag residue square Bloch-Ogus}
            \begin{tikzcd}
                \mathrm{H}^{d+2}(K,F(d+1)) \ar[d] \ar[r] & \bigoplus_{v \in C^{(1)}} \mathrm{H}^{d+1}(\kappa(v),F(d)) \ar[d] \\
                \bigoplus_{v \in Y^{(0)}} \mathrm{H}^{d+1}(\kappa(v),F(d)) \ar[r] & \bigoplus_{x \in Y^{(1)}} \mathrm{H}^{d}(\kappa(x),F(d-1)).
            \end{tikzcd}
        \end{equation}
        Which proves that the image of \eqref{eq residues injective} lies in 
        \[
            \ker \left( \bigoplus_{v \in Y^{(0)}} \mathrm{H}^{d+1}(\kappa(v), \mathbf{Z}/m(d)) \to \bigoplus_{x \in Y^{(1)}} \mathrm{H}^{d}(\kappa(x), \mathbf{Z}/m(d-1)) \right).
        \]
        Giving the inclusion of $\Sha^{d+2}(K, F(d+1))$ in the desired kernel. In order to deduce surjectivity we proceed as in \cite[Proposition 5.2]{Kato1986Hasse2dim}. As it was recalled in section \ref{sec preliminary}, the Kato complex gets identify with the Bloch-Ogus complex, \cite[Theorem 2.5.4]{JansenSaitoSato2014}. Under this identification the the anti-commutative square \eqref{diag residue square Bloch-Ogus} gets identified with the (anti-commutative) rightmost square of the following diagram
        \begin{equation}\label{diag Cousin vs localisation}
            \begin{tikzcd}[column sep = 3ex, row sep =2ex]
                \mathrm{H}^{d+2}(C, F(d+1)) \ar[d, twoheadrightarrow, "\delta"] \ar[r,"f"] & \mathrm{H}_{\eta}^{d+2}(C,F(d+1)) \ar[r] \ar[d] & \bigoplus_{v \in C^{(1)}} \mathrm{H}^{d+3}_v(C, F(d+1))  \ar[d]\\
                \mathrm{H}^{d+3}_{Y}(\mathcal{C}, F(d+1)) \ar[r,"g"] & \bigoplus_{v \in Y^{(0)}}\mathrm{H}_{v}^{d+3}(\mathcal{C},F(d+1)) \ar[r] & \bigoplus_{x \in Y^{(1)}} \mathrm{H}^{d+4}_x(\mathcal{C}, F(d+1)).
            \end{tikzcd}
        \end{equation}
        Note that the leftmost square is commutative.
        \begin{lemma}
            The morphism $\delta$ is surjective.
        \end{lemma}
        \begin{proof}
            The localisation exact sequence gives us an exact sequence
            \[\mathrm{H}^{d+2}(C, F(d+1)) \to \mathrm{H}^{d+3}_{Y}(\mathcal{C}, F(d+1)) \to \mathrm{H}^{d+3}(\mathcal{C}, F(d+1)). \]
            Then it is enough to prove that $\mathrm{H}^{d+3}(\mathcal{C}, F(d+1))$ is trivial. Denote by $j: \Spec K \to \mathcal{C}$ the inclusion of the generic point. The Leray spectral sequence \cite[Theorem 1.18 (a)]{Milne1980etalecoh}
            \[\mathrm{H}^p(\mathcal{C},\mathrm{R}^qj_*(F(d+1))) \Rightarrow \mathrm{H}^{p+q}(K, F(d+1))\]
            identifies $\mathrm{H}^{d+3}(\mathcal{C}, F(d+1))$ with a subgroup of $\mathrm{H}^{d+3}(K, F(d+1))$. The later group is trivial because the cohomological dimension of $K$ is at most $d+2$.
        \end{proof}
        \noindent The rows of \eqref{diag Cousin vs localisation} are exact. Indeed, let $U$ be an open subset of $C$. The localisation exact sequence takes the following form
        \begin{equation*} 
            \cdots \to \mathrm{H}^{d+2}(C, F(d+1)) \to \mathrm{H}^{d+2}(U, F(d+1)) \to \mathrm{H}^{d+3}_{C \setminus U}(C, F(d+1)) \to \cdots.
        \end{equation*}
        Passing to the limit over $U$ while taking into account the isomorphism 
        \[ \mathrm{H}^{d+3}_{C \setminus U}(C, F(d+1))  \simeq \bigoplus_{v \in C \setminus U} \mathrm{H}^{d+3}_v(C, F(d+1))\]
        gives the exact sequence
        \begin{equation*} 
            \cdots \to \mathrm{H}^{d+2}(C, F(d+1)) \to \mathrm{H}^{d+2}(K, F(d+1)) \to \bigoplus_{v \in C^{(1)}} \mathrm{H}^{d+3}_v(C, F(d+1)) \to \cdots.
        \end{equation*}
        Let $V \subseteq \mathcal{C}$ be an open subset. The localisation exact sequence \cite[Remark III 1.26]{Milne1980etalecoh} associated with the triple $(\mathcal{C},V \cup C, C)$ take the following form
        \[\cdots \to \mathrm{H}^{d+3}_{Y}(\mathcal{C}, F(d+1)) \to \mathrm{H}^{d+3}_{Y \cap V}(V, F(d+1)) \to \mathrm{H}^{d+4}_{Y \setminus V}(\mathcal{X}, F(d+1))\to \cdots \]
        Taking the limit over $V$ while taking into account the \cite[Lemma 1.2.1]{CTHooblerKahn1997BOGTheorem} and
        \[ \mathrm{H}^{d+3}_{Y \setminus V}(\mathcal{C}, F(d+1))  \simeq \bigoplus_{v \in Y \setminus V} \mathrm{H}^{d+3}_v(\mathcal{C}, F(d+1))\]
        gives the desired exact sequence. \par
        Let $\alpha \in \ker \left( \bigoplus_{v \in Y^{(0)}} \mathrm{H}^{d+1}(\kappa(v), \mathbf{Z}/m(d)) \to \bigoplus_{x \in Y^{(1)}} \mathrm{H}^{d}(\kappa(x), \mathbf{Z}/m(d-1)) \right)$. Since the bottom row of \eqref{diag Cousin vs localisation} is exact and $\delta$ is surjective we can find a class $\beta \in \mathrm{H}^{d+2}(C, F(d+1))$ such that $g(\delta(\beta)) = \alpha$. We deduce that $\alpha$ lifts to $f(\beta)$ which belongs to $\Sha^{d+2}(K,F(d+1))$ because the top row of \eqref{diag Cousin vs localisation} is a complex. 
    \end{proof}
        Note that for every $v \in Y^{(0)}$, the residue field $\kappa(v)$ is a one-variable function field over a $(d-1)$-local field. Then it makes sense to talk about the Tate-Shafarevich group $\Sha^{d+1}(\kappa(v), \mathbf{Z}/m(d))$ with respect to valuations coming from closed points of $\overline{\{v \}} \subseteq \mathcal{C}$. The subgroup $\bigoplus_{v \in Y^{(0)}} \Sha^{d+1}(\kappa(v), \mathbf{Z}/m(d))$ of $\bigoplus_{v \in Y^{(0)}} \mathrm{H}^{d+1}(\kappa(v), \mathbf{Z}/m(d))$ is contained in 
        \[\ker \left( \bigoplus_{v \in Y^{(0)}} \mathrm{H}^{d+1}(\kappa(v), \mathbf{Z}/m(d)) \to \bigoplus_{x \in Y^{(1)}} \mathrm{H}^{d}(\kappa(x), \mathbf{Z}/m(d-1)) \right).\]
        Under the identifications in Proposition \ref{prop sha as kernel of residues}, we can see $\bigoplus_{v \in Y^{(0)}} \Sha^{d+1}(\kappa(v), \mathbf{Z}/m(d))$ as a subgroup of $\Sha^{d+2}(K, \mathbf{Z}/m(d+1))$. In order to simplify the notation, we set
        \[
            A := \Sha^{d+2}(K, \mathbf{Z}/m(d+1)) \big/ \bigoplus_{v \in Y^{(0)}} \Sha^{d+1}(\kappa(v), \mathbf{Z}/m(d)).
        \]
    \begin{prop}\label{prop Sha modulo the previous Shas}
        Let $g$ be the genus of the reduction graph of $\mathcal{C}$. Then we have a natural isomorphism $A \simeq (\mathbf{Z}/m)^g$.
    \end{prop}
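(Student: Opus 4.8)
The plan is to turn the kernel description of Proposition \ref{prop sha as kernel of residues} into a purely combinatorial computation on the reduction graph. Write $F(r) = \mathbf{Z}/m(r)$ and, for each $v \in Y^{(0)}$, let $r_v \colon \mathrm{H}^{d+1}(\kappa(v), F(d)) \to \bigoplus_{x} \mathrm{H}^{d}(\kappa(x), F(d-1))$ be the total residue map, the sum running over the closed points $x$ of the curve $\overline{\{v\}}$. As in the proof of Proposition \ref{prop sha as kernel of residues}, each local residue $\mathrm{H}^{d+1}(\kappa(v)_x, F(d)) \to \mathrm{H}^{d}(\kappa(x), F(d-1))$ is injective because its kernel is $\mathrm{H}^{d+1}(\kappa(x), F(d)) = 0$ (the field $\kappa(x)$ is a $(d-1)$-local field, hence of cohomological dimension $d$); consequently $\ker r_v = \Sha^{d+1}(\kappa(v), F(d))$. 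Setting $R_v := \operatorname{image}(r_v) \cong \mathrm{H}^{d+1}(\kappa(v), F(d))/\Sha^{d+1}(\kappa(v), F(d))$ and using the invariant isomorphisms $\mathrm{H}^{d}(\kappa(x), F(d-1)) \cong \mathbf{Z}/m$, the Kato differential restricts to a ``sum over components'' map $\Sigma \colon \bigoplus_v R_v \to P := \bigoplus_{x \in Y^{(1)}} \mathbf{Z}/m$. A direct diagram chase then shows that $(\alpha_v) \mapsto (r_v(\alpha_v))$ induces a natural isomorphism
\[ A \;\simeq\; \ker\Bigl( \textstyle\bigoplus_{v \in Y^{(0)}} R_v \xrightarrow{\ \Sigma\ } P \Bigr), \]
since the kernel of $(\alpha_v) \mapsto (r_v(\alpha_v))$ is exactly $\bigoplus_v \Sha^{d+1}(\kappa(v), F(d))$.

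The main step is to identify each $R_v$ with the full kernel of the sum map $\Sigma_v \colon \bigoplus_{x} \mathbf{Z}/m \to \mathbf{Z}/m$ attached to the curve $\overline{\{v\}}$. Since $\kappa(v)$ is the function field of a curve over the $(d-1)$-local field $k^{(1)}$, it is an instance of the situation of Proposition \ref{prop higher recirpocity} with $d$ replaced by $d-1$, so the reciprocity sequence with $\mathbf{Q}/\mathbf{Z}(d)$-coefficients gives $\operatorname{image}(r_v^{\mathbf{Q}/\mathbf{Z}}) = \ker(\Sigma_v^{\mathbf{Q}/\mathbf{Z}})$. To descend this surjectivity to $\mathbf{Z}/m$, I would lift an element $c \in \ker\Sigma_v \subseteq \bigoplus_x \mathbf{Z}/m \hookrightarrow \bigoplus_x \mathbf{Q}/\mathbf{Z}$ to a class $\tilde\alpha \in \mathrm{H}^{d+1}(\kappa(v), \mathbf{Q}/\mathbf{Z}(d))$; then $m\tilde\alpha$ lies in $\Sha^{d+1}(\kappa(v), \mathbf{Q}/\mathbf{Z}(d))$, and the key input is that this group is divisible, so $m\tilde\alpha = m\beta$ with $\beta \in \Sha$, making $\tilde\alpha - \beta$ an $m$-torsion class with the prescribed residues $c$ which therefore lifts to $\mathrm{H}^{d+1}(\kappa(v), F(d))$. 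The required divisibility of $\Sha^{d+1}(\kappa(v), \mathbf{Q}/\mathbf{Z}(d))$ is the lower-dimensional analogue of the divisibility facts recalled in Section \ref{sec preliminary} and follows from Poitou-Tate duality together with Beilinson-Lichtenbaum applied to $\kappa(v)$. This finite-coefficient surjectivity is the part I expect to be the main obstacle, since the reciprocity statement we are given is phrased only with $\mathbf{Q}/\mathbf{Z}$-coefficients.

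Granting $R_v = \ker\Sigma_v$, the computation becomes combinatorics on the bipartite graph $\Gamma(\mathcal{C}) = (V, E)$. An element of $\ker(\bigoplus_v R_v \to P)$ is a family $(\rho_v)$ with $\Sigma_v\rho_v = 0$ for every $v$ and $\sum_{v \ni x}(\rho_v)_x = 0$ for every closed point $x$. At a point $x$ lying on a single component $v_0$ this forces $(\rho_{v_0})_x = 0$, so each $\rho_v$ is supported on $I(Y) \cap \overline{\{v\}}$; recording the remaining values as an edge function $c(\{v,x\}) := (\rho_v)_x$ for $x \in I(Y)$, the two constraints become precisely $\sum_{x}c(\{v,x\}) = 0$ at each vertex $v \in Y^{(0)}$ and $\sum_{v}c(\{v,x\}) = 0$ at each vertex $x \in I(Y)$. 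In other words, $A$ is canonically isomorphic to the group of $\mathbf{Z}/m$-valued $1$-cycles $\ker\bigl(\partial_1 \colon C_1(\Gamma(\mathcal{C}), \mathbf{Z}/m) \to C_0(\Gamma(\mathcal{C}), \mathbf{Z}/m)\bigr)$.

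Finally, the special fibre $Y$ of $\mathcal{C}$ is connected (as $\mathcal{C}$ is a proper flat model of a geometrically integral curve), hence so is the graph $\Gamma(\mathcal{C})$; a graph carries no $2$-chains, so this cycle group equals $H_1(\Gamma(\mathcal{C}), \mathbf{Z}/m) \cong H_1(\Gamma(\mathcal{C}), \mathbf{Z}) \otimes \mathbf{Z}/m \cong (\mathbf{Z}/m)^{g}$, where $g = \#E - \#V + 1$ is the genus of the reduction graph. All the maps used (residues and invariant isomorphisms) are natural, so the resulting isomorphism $A \simeq (\mathbf{Z}/m)^g$ is natural, as claimed.
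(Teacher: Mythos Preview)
Your approach coincides with the paper's: quotient by the lower $\Sha$'s, use reciprocity on each component $\overline{\{v\}}$ to identify what remains with the sum-zero edge functions on $\Gamma(\mathcal{C})$, then recognise that cycle group as $(\mathbf{Z}/m)^g$. The paper organises the reciprocity step via the auxiliary group $B$ and cites \cite[Lemme 1.2 (iii)]{Diego2019Duality2dim} for the final combinatorics rather than phrasing it as $H_1(\Gamma(\mathcal{C}),\mathbf{Z}/m)$, but the substance is identical.

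The one slip is in your justification of the $\mathbf{Z}/m$-level reciprocity. Your reduction to the divisibility of $\Sha^{d+1}(\kappa(v), \mathbf{Q}/\mathbf{Z}(d))$ is correct, but this divisibility does not follow from ``Poitou--Tate together with Beilinson--Lichtenbaum'' in the way you suggest: the divisibility recalled in Section~\ref{sec preliminary} is for $\Sha^2(-,\mathbf{Z})$, which sits on the opposite side of the pairing, and the duality stated there does not pair against $\mathbf{Q}/\mathbf{Z}(d)$-coefficients in the degree you need. The paper avoids the issue by appealing to Proposition~\ref{prop higher recirpocity}, whose underlying reference \cite[Proposition 2.6]{Diego2015LGPrincipleHigherLocal} already supplies the finite-coefficient exact sequence directly. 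If you prefer your divisibility route, observe that Proposition~\ref{prop Sha modulo the previous Shas} and Theorem~\ref{thm computation of Sha} are proved jointly by induction on $d$, so at this stage you may use Theorem~\ref{thm computation of Sha} for $\kappa(v)$: it gives $\Sha^{d+1}(\kappa(v),\mathbf{Z}/m(d))\cong(\mathbf{Z}/m)^{r}$ with $r=r(\overline{\{v\}})$ independent of $m$, whence the colimit $(\mathbf{Q}/\mathbf{Z})^{r}$ is indeed divisible.
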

    \begin{proof}
        Let $\Gamma(\mathcal{C}) =(V,E)$ be the reduction graph of $\mathcal{C}$. Fix 
        \[B' := \bigoplus_{v \in Y^{(0)}}  \mathrm{H}^{d+1}(\kappa(v), \mathbf{Z}/m(d)) \, \big/ \, \Sha^{d+1}(\kappa(v), \mathbf{Z}/m(d)).\]
        Let $B$ be the subgroup of $B'$ formed by classes represented by $(\alpha_v)_{v \in Y^{(0)}}$ such that $\partial_x(\alpha) = 0$ for every $x \not \in I(Y)$. Note that $A$ is a subgroup of $B$. \par 
        For every $\{x,v\} \in E$ we denote by $\kappa(v)_x$ the completion of $\kappa(v)$ with respect to the valuation induced by $x$. From Proposition \ref{prop higher recirpocity} we deduce the following exact sequence
        \begin{equation}\label{seq B as kernel 1}
            0 \to B \to\bigoplus_{\{x,v\} \in E} \mathrm{H}^{d+1}(\kappa(v)_x, \mathbf{Z}/m(d)) \xrightarrow{\sum j_{x,m}}\bigoplus_{v \in Y^{(0)}} \mathbf{Z}/m \to 0
        \end{equation}
        Moreover Proposition \ref{prop higher recirpocity} gives the following identifications
        \begin{align}\label{eq identifications invariants}
            \begin{split}
            \mathrm{H}^{d}(\kappa(x), \mathbf{Z}/m(d-1)) & \simeq \mathbf{Z}/m \\
            \mathrm{H}^{d+1}(\kappa(v)_x, \mathbf{Z}/m(d)) & \simeq \mathbf{Z}/m
            \end{split}
        \end{align}
       This isomorphism together with the exact sequence \eqref{seq B as kernel 1} identify $B$ with the kernel of the sum map $\Sigma_1: \bigoplus_{\{x,v \} \in E} \mathbf{Z}/m \to \bigoplus_{v \in Y^{(0)}} \mathbf{Z}/m$. \par
       On the other hand, Proposition \ref{prop sha as kernel of residues} identifies $A$ with the kernel of the map 
       \[ \partial: B \to \bigoplus_{x \in I(Y)} \mathrm{H}^{d}(\kappa(x), \mathbf{Z}/m(d-1))\]
       induced by the differential of the Kato complex. Let $\Sigma_2:\bigoplus_{\{x,v \} \in E} \mathbf{Z}/m \to \bigoplus_{v \in I(Y)} \mathbf{Z}/m$ be the map given by $\Sigma_2((a_{\{x,v\}})_{\{x,v\}\in E}) := (\sum_{x \in \overline{v}}a_{\{x,v\}})_{x \in I(Y)}$. Under the identification \eqref{eq identifications invariants} we get the following commutative diagram 
        \begin{equation}
            \begin{tikzcd}
                B \ar[r] \ar[d,"\partial"] & \bigoplus_{\{x,v\} \in E} \mathbf{Z}/m  \ar[d, "\Sigma_2"]  \\
                \bigoplus_{x \in I(Y)} \mathbf{Z}/m \ar[r, equal] & \bigoplus_{x \in I(Y)} \mathbf{Z}/m  \\
            \end{tikzcd}
        \end{equation}
        Putting all of this together we have identified $A$ with the subgroup $\ker \Sigma_1 \cap \ker \Sigma_2$ of $\bigoplus_{\{x,v\} \in E} \mathbf{Z}/m$. We conclude the desired isomorphism applying \cite[Lemme 1.2 (iii)]{Diego2019Duality2dim}
    \end{proof}
    
    \begin{thm}\label{thm computation of Sha}
        The group $\Sha^{d+2}(K, \mathbf{Z}/m(d+1))$ is isomorphic to $(\mathbf{Z}/m)^{r(C)}$.
    \end{thm}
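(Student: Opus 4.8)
The plan is to argue by induction on $d$, combining the two preceding propositions with the recursive definition of $r(C)$. Recall that the discussion just before Proposition~\ref{prop Sha modulo the previous Shas} realises $\bigoplus_{v \in Y^{(0)}} \Sha^{d+1}(\kappa(v), \mathbf{Z}/m(d))$ as a subgroup of $\Sha^{d+2}(K, \mathbf{Z}/m(d+1))$ with quotient $A$, so that there is a short exact sequence
\[ 0 \to \bigoplus_{v \in Y^{(0)}} \Sha^{d+1}(\kappa(v), \mathbf{Z}/m(d)) \to \Sha^{d+2}(K, \mathbf{Z}/m(d+1)) \to A \to 0, \]
and Proposition~\ref{prop Sha modulo the previous Shas} identifies $A$ with $(\mathbf{Z}/m)^{g}$, where $g = g(C)$ is the genus of the reduction graph of $\mathcal{C}$. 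Since $\mathbf{Z}/m(d+1)$ is $m$-torsion, the middle term is killed by $m$; hence every term is a $\mathbf{Z}/m$-module and every map is $\mathbf{Z}/m$-linear.

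The key point is that $A \simeq (\mathbf{Z}/m)^{g}$ is a free, hence projective, $\mathbf{Z}/m$-module, so the sequence above splits in the category of $\mathbf{Z}/m$-modules and yields
\[ \Sha^{d+2}(K, \mathbf{Z}/m(d+1)) \simeq \Bigl( \bigoplus_{v \in Y^{(0)}} \Sha^{d+1}(\kappa(v), \mathbf{Z}/m(d)) \Bigr) \oplus (\mathbf{Z}/m)^{g}. \]
I would then invoke the induction hypothesis on the summands: each $v \in Y^{(0)}$ is the generic point of an irreducible component $D_i$ of the special fibre, so $\kappa(v)$ is the function field of the curve $D_i$ over a $(d-1)$-local field, whose own $(d-2)$-th residue field is $k^{(d-1)}$ and hence $p$-adic. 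The inductive hypothesis applied to $D_i$ gives $\Sha^{d+1}(\kappa(v), \mathbf{Z}/m(d)) \simeq (\mathbf{Z}/m)^{r(D_i)}$, and summing over the components produces $(\mathbf{Z}/m)^{\sum_i r(D_i)}$. Plugging this into the split sequence gives
\[ \Sha^{d+2}(K, \mathbf{Z}/m(d+1)) \simeq (\mathbf{Z}/m)^{\, g + \sum_i r(D_i)} = (\mathbf{Z}/m)^{r(C)}, \]
which is exactly the recursion defining $r(C)$.

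For the base of the induction I would treat curves over finite fields, reached when $d=1$: there the components $D_i$ live over the finite field $k^{(1)}$, so $r(D_i)=0$, and one needs $\Sha^{2}(\kappa(v), \mathbf{Z}/m(1)) = 0$. This is the classical Hasse principle for the $m$-torsion of the Brauer group of a curve over a finite field, furnished by class field theory; it forces the subgroup in the exact sequence to vanish, recovering $\Sha^{3}(K,\mathbf{Z}/m(2)) \simeq (\mathbf{Z}/m)^{g} = (\mathbf{Z}/m)^{r(C)}$ in agreement with \cite[Corollary 2.9]{Kato1986Hasse2dim}.

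I expect the only genuinely delicate step to be the splitting. The two propositions by themselves only determine $\Sha^{d+2}(K,\mathbf{Z}/m(d+1))$ up to its order, and merely knowing that this group is killed by $m$ and has order $m^{r(C)}$ would not pin down its isomorphism type; it is precisely the projectivity of $A$ over $\mathbf{Z}/m$ (together with freeness of the sub, obtained inductively) that upgrades the order count to the asserted $(\mathbf{Z}/m)^{r(C)}$. The remaining verifications --- that the $p$-adic hypothesis is inherited by the residue field $k^{(1)}$, and that the closed points of $D_i$ used to define $\Sha^{d+1}(\kappa(v),-)$ match the points of $\mathcal{C}$ of codimension one lying on $\overline{\{v\}}$ --- are routine bookkeeping.
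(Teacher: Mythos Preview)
Your argument is correct and follows the same route as the paper's proof: induction on $d$, the short exact sequence coming from Proposition~\ref{prop Sha modulo the previous Shas}, the splitting via freeness of $(\mathbf{Z}/m)^g$ over $\mathbf{Z}/m$, and the recursion defining $r(C)$. The only cosmetic difference is that the paper quotes \cite[Corollary~2.9]{Kato1986Hasse2dim} as the base case $d=1$, whereas you push the induction one step further and take as base the vanishing of $\Sha^{2}(\kappa(v),\mathbf{Z}/m(1))$ for global function fields; these are equivalent starting points.
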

    \begin{proof}
        We proceed by induction on $d$. The case $d=1$ is treated in \cite[Corollary 2.9]{Kato1986Hasse2dim}. For the general case, we can apply Proposition \ref{prop Sha modulo the previous Shas} to deduce an exact sequence of $\mathbf{Z}/m$-modules
        \[
            0 \to \bigoplus_{v \in Y^{(0)}} \Sha^{d+1}(\kappa(v), \mathbf{Z}/m(d)) \to \Sha^{d+2}(K, \mathbf{Z}/m(d+1)) \to \left(\mathbf{Z}/m\right)^g \to 0.
        \]
        By induction hypothesis $\Sha^{d+1}(\kappa(v), \mathbf{Z}/m(d))$ is isomorphic to $\left(\mathbf{Z}/m\right)^{r(\overline{\{v \}})}$ for every $v \in Y^{(0)}$. We conclude because the previous exact sequence splits.
    \end{proof}
    \section{The \texorpdfstring{$C_{d+1}^{d+1}$}{Kato Kuzumaki} property for function fields over \texorpdfstring{$d$}{d}-local fields}
    
    In this section we follow the strategy of proof for \cite[Main Theorem 1]{DiegoLuco2024KKp-adicFunction} to obtain the following analogue result.
    \begin{thm}\label{thm chi2 torsion}
        Let $l/k$ be a finite purely unramified extension and set $L:= lK$. Let $Z$ be a proper $K$-variety. Then the quotient
        \[ \mathrm{K}_{d+1}(K) \, \big/ \, \left\langle \mathrm{N}_{d+1}(L/K), \mathrm{N}_{d+1}(Z/K) \right\rangle\]
        is $\chi_{K}(Z,E)^2$-torsion for any coherent sheaf $E$ on $Z$.
    \end{thm}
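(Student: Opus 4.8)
The plan is to follow the strategy of \cite[Main Theorem 1]{DiegoLuco2024KKp-adicFunction}, combining a local input from the Kato--Kuzumaki property of the completions with the global arithmetic duality encoded in Theorem \ref{thm computation of Sha} and Theorem \ref{thm B}. Write $m = \chi_K(Z,E)$ and $W = \langle \mathrm{N}_{d+1}(L/K), \mathrm{N}_{d+1}(Z/K)\rangle$; the goal is to prove $m^2 \mathrm{K}_{d+1}(K) \subseteq W$. Introduce the subgroup $W_1 \subseteq \mathrm{K}_{d+1}(K)$ of classes that are everywhere locally $Z$-norms, i.e. $\beta \in W_1$ if and only if $\mathrm{res}_v(\beta) \in \mathrm{N}_{d+1}(Z_{K_v}/K_v)$ for all $v \in C^{(1)}$. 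The argument then splits into a local statement, $m\,\mathrm{K}_{d+1}(K) \subseteq W_1$, and a global statement, $m\,W_1 \subseteq W$; together they give $m^2 \mathrm{K}_{d+1}(K) \subseteq m\,W_1 \subseteq W$.

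For the local statement, note that the Euler characteristic is invariant under the flat base change $K \to K_v$, so $\chi_{K_v}(Z_{K_v}, E_{K_v}) = m$ for every $v$. Each completion $K_v$ is a $(d+1)$-local field, and the strong form of the $C_1^{d+1}$ property for higher local fields recalled in the introduction yields $m\cdot \mathrm{K}_{d+1}(K_v) \subseteq \mathrm{N}_{d+1}(Z_{K_v}/K_v)$, at least away from the residue characteristic $p$. Hence $m\alpha \in W_1$ for every $\alpha$. It is essential here that $\chi$, and not the (possibly larger) index, governs the local norm group; this is exactly why the passage must be cohomological rather than a direct manipulation of $0$-cycles.

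For the global statement we translate into cohomology. The Nesterenko--Suslin--Totaro identification together with the Beilinson--Lichtenbaum vanishing $\mathrm{H}^{d+2}(K,\mathbf{Z}(d+1))=0$ give $\mathrm{K}_{d+1}(K)/m \simeq \mathrm{H}^{d+1}(K,\mathbf{Z}/m(d+1))$, and likewise over each $K_v$; cup product with characters, followed by the local invariants $j_{m,v}$ of Proposition \ref{prop higher recirpocity}, realizes the norm groups as orthogonality conditions. Via Poitou--Tate duality the defect of the local-to-global principle for $W_1$ is then measured by $\Sha^{d+2}(K,\mathbf{Z}/m(d+1))$, which by Theorem \ref{thm computation of Sha} is finite, isomorphic to $(\mathbf{Z}/m)^{r(C)}$, and in particular killed by $m$. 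This is where the purely unramified extension $L = lK$ is used: Theorem \ref{thm B} together with Remark \ref{rmk invariance rC under tot nr} shows that restriction induces an isomorphism $\Sha^{d+2}(K,\mathbf{Q}/\mathbf{Z}(d+1)) \xrightarrow{\sim} \Sha^{d+2}(L,\mathbf{Q}/\mathbf{Z}(d+1))$, so that the reciprocity obstruction is unchanged along $L/K$ and the corresponding part of $W_1$ is absorbed into $\mathrm{N}_{d+1}(L/K)$. Multiplying by $m$ kills the residual class in the $m$-torsion group $\Sha^{d+2}$, giving $m\,W_1 \subseteq W$.

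The hard part will be the global statement, and specifically the duality bookkeeping: making Poitou--Tate identify the local-to-global defect of the $Z$-norm subgroup $W_1$ with (the dual of) $\Sha^{d+2}(K,\mathbf{Z}/m(d+1))$, and verifying that $\mathrm{N}_{d+1}(L/K)$ accounts for exactly the $\Sha$-part through Theorem \ref{thm B}, so that the two factors of $m$ are genuinely independent. A further delicate point is the $p$-primary part of $\mathrm{K}_{d+1}$, which the local $C_1^{d+1}$ input does not control and which must be treated separately, either from the explicit structure of $\Sha^{d+2}(K,\mathbf{Z}/m(d+1))$ in Theorem \ref{thm computation of Sha} or by a direct analysis of symbols and their residues via the compatibilities recorded in Section \ref{sec preliminary}.
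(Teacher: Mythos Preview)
Your proposal misidentifies the obstruction group, and this is a genuine gap rather than a bookkeeping issue. The quotient you care about is $\mathrm{K}_{d+1}(K)/\langle \mathrm{N}_{d+1}(L/K), \mathrm{N}_{d+1}(K_1/K),\dots,\mathrm{N}_{d+1}(K_r/K)\rangle$ for a \emph{specific} finite list of extensions $K_i$ with $Z(K_i)\neq\emptyset$, and by Lemma~\ref{lemma norms Ktheory and motivic} this is $\mathrm{H}^{d+2}(K,\check{T}\otimes\mathbf{Z}(d+1))$ for the normic torus attached to $E=L\times K_1\times\cdots\times K_r$. The local-to-global defect is therefore $\Sha^{d+2}(K,T)$, whose Poitou--Tate dual is $\overline{\Sha^2(K,\hat{T})}$ for that particular $\hat{T}$---not $\Sha^{d+2}(K,\mathbf{Z}/m(d+1))$. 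The latter group does not see $Z$ at all, so it cannot possibly measure the defect of $W_1\subseteq W$ as you claim. In the paper the computation of $\Sha^{d+2}(K,\mathbf{Q}/\mathbf{Z}(d+1))$ enters only indirectly, through Lemma~\ref{lemma torsion of kernel and surj shas}, as one ingredient in bounding the torsion of $\overline{\Sha^2(K,\hat{T})}$ via the rather delicate Proposition~\ref{prop horrible}.

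Two further steps are missing from your outline. First, one must pass from ``locally a $Z$-norm'' to ``locally a norm from a fixed finite family of global extensions $K_i$'': this is the globalisation step (Proposition~\ref{prop globalising local extensions}), and without it there is no torus $\hat{T}$ to which duality applies. The extensions $K_i$ must moreover be chosen with care (linearly disjoint from $L_n$, embedding into suitable local extensions at auxiliary places $w_\ell$) precisely so that the hypotheses (LD1)--(H4) of Proposition~\ref{prop horrible} hold with $m=\chi(Z,E)$. Second, the local input is not Wittenberg's $C_1^{d+1}$ result for higher local fields (which indeed fails at $p$), but Proposition~\ref{prop local solution 1}, proved by induction on $d$ via a model-theoretic d\'evissage over $\mathcal{O}_k$; this already uses $L$ and works uniformly at all primes. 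Finally, the reduction from arbitrary proper $Z$ to smooth $Z$ requires the d\'evissage of Step~6.
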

    Here, $\chi_K(Z,E)$ is the Euler characteristic of $E$.
    \subsection*{Step 0: Interpreting norms in Milnor \texorpdfstring{$\mathrm{K}$}{K}-theory in terms of motivic cohomology}
    The same proof of \cite[Lemma 3.2]{DiegoLuco2024KKp-adicFunction} gives the following lemma.
    \begin{lemma}\label{lemma norms Ktheory and motivic}
        Let $L$ be a field and $L_1, \cdots, L_r$ be finite separable extensions of $L$. Consider the étale $L$-algebra $E:= \prod_{i=1}^r L_i$ and the Galois module $\check{T}$ defined by the following exact sequence
        \[0 \to \check{T} \to \mathbf{Z} [E/L] \to \mathbf{Z} \to 0.\]
        Then for every $n \in \mathbf{N}$ there is a natural isomorphism
        \[ \mathrm{H}^{n+1}(L, \check{T} \otimes \mathbf{Z}(n)) \simeq \mathrm{K}_{n}(L) \, \big/ \, \left\langle \mathrm{N}_{n}(L_i/L)\; | \; i \in \{1 ,\cdots r \}\right\rangle.\]
    \end{lemma}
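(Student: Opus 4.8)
The plan is to feed the defining exact sequence of $\check T$ through the motivic complex and then identify the resulting connecting map with a sum of norm maps. First I would note that $\mathbf{Z}$, $\mathbf{Z}[E/L]$ and $\check T$ are all free as abelian groups, so the sequence $0 \to \check T \to \mathbf{Z}[E/L] \to \mathbf{Z} \to 0$ splits $\mathbf{Z}$-linearly; tensoring with the complex $\mathbf{Z}(n)$ (for which the derived and underived tensor products agree, by this freeness) therefore yields a short exact sequence of complexes of Galois modules, hence a distinguished triangle
\[\check T \otimes \mathbf{Z}(n) \to \mathbf{Z}[E/L] \otimes \mathbf{Z}(n) \to \mathbf{Z}(n) \xrightarrow{+1} .\]
Taking étale hypercohomology over $L$ produces a long exact sequence relating the three groups, and the task reduces to computing the outer two terms and the induced map between them.

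Next I would evaluate the middle term. The permutation module decomposes as a direct sum, over $i \in \{1,\dots,r\}$, of the Galois modules induced from the trivial module over $L_i$; by the projection formula together with Shapiro's lemma this gives natural isomorphisms $\mathrm{H}^j(L, \mathbf{Z}[E/L]\otimes \mathbf{Z}(n)) \simeq \bigoplus_{i=1}^r \mathrm{H}^j(L_i, \mathbf{Z}(n))$ for every $j$. Applying the Nesterenko--Suslin--Totaro identification in degree $n$, the Beilinson--Lichtenbaum vanishing $\mathrm{H}^{n+1}(L_i, \mathbf{Z}(n)) = 0$ over each $L_i$, and the same vanishing over $L$, the relevant portion of the long exact sequence collapses to
\[\bigoplus_{i=1}^r \mathrm{K}_n(L_i) \xrightarrow{\;\phi\;} \mathrm{K}_n(L) \to \mathrm{H}^{n+1}(L, \check T\otimes \mathbf{Z}(n)) \to 0 ,\]
so that $\mathrm{H}^{n+1}(L, \check T\otimes \mathbf{Z}(n))$ is identified with the cokernel of $\phi$.

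It then remains to identify $\phi$ with $\sum_i \mathrm{N}_{L_i/L}$. The augmentation $\mathbf{Z}[E/L] \to \mathbf{Z}$ induces, on the $i$-th summand and via Shapiro's lemma, the corestriction $\mathrm{H}^n(L_i, \mathbf{Z}(n)) \to \mathrm{H}^n(L, \mathbf{Z}(n))$. The key input — and the step I expect to be the main obstacle — is that the Nesterenko--Suslin--Totaro isomorphism is compatible with transfers, i.e. under $\mathrm{H}^n(-,\mathbf{Z}(n)) \simeq \mathrm{K}_n(-)$ the corestriction corresponds to the Milnor $\mathrm{K}$-theory norm $\mathrm{N}_{L_i/L}$. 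Granting this compatibility, the image of $\phi$ is precisely the subgroup $\langle \mathrm{N}_n(L_i/L) \mid i \in \{1,\dots,r\}\rangle$ (recall $\mathrm{N}_n(L_i/L) = \mathrm{N}_{L_i/L}(\mathrm{K}_n(L_i))$), whence $\mathrm{H}^{n+1}(L, \check T\otimes \mathbf{Z}(n)) \simeq \mathrm{K}_n(L)/\langle \mathrm{N}_n(L_i/L)\rangle$. Naturality is automatic, since each isomorphism above is functorial in the datum $(L; L_1,\dots,L_r)$.
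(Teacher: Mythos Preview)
Your argument is correct and is exactly the expected one: the paper does not spell out a proof but simply invokes \cite[Lemma 3.2]{DiegoLuco2024KKp-adicFunction}, whose proof proceeds precisely via the distinguished triangle obtained by tensoring the defining sequence with $\mathbf{Z}(n)$, Shapiro's lemma, the Nesterenko--Suslin--Totaro isomorphism, Beilinson--Lichtenbaum vanishing, and the identification of corestriction with the Milnor norm. Your flagged ``main obstacle'' (compatibility of the Nesterenko--Suslin--Totaro isomorphism with transfers) is indeed the one nontrivial input, but it is standard and established in the literature.
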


    \subsection*{Step 1: Reducing to curves with 0th residual index $1$}
    In this step we prove the following proposition.
    \begin{prop}\label{prop reduction to index 1}
        Let $l/k$ be a purely unramified extension such that $[l:k]$ divides $i_{0}(C)$. Fix $L := lK$. Then the norm map $\mathrm{N}_{L/K}: \mathrm{K}_{d+1}(L) \to \mathrm{K}_{d+1}(K)$ is surjective.
    \end{prop}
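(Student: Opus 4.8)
The plan is to recast the surjectivity of $\mathrm{N}_{L/K}$ as the vanishing of a cup product and then feed in the stability statement of Theorem \ref{thm B}. Since $l/k$ is purely unramified it is determined by the residual extension $l^{(d)}/k^{(d)}$ of finite fields, so $l/k$ is cyclic; write $e=[l:k]$ and let $\chi$ be the inflation to $K$ of a generator of $\mathrm{Hom}(\mathrm{Gal}(l/k),\mathbf{Q}/\mathbf{Z})$, so that $\chi\in\mathrm{H}^1(K,\mathbf{Z}/e)$ cuts out $L=lK$. Applying Lemma \ref{lemma norms Ktheory and motivic} to the single extension $L/K$, with $\check T$ defined by $0\to\check T\to\mathbf{Z}[L/K]\to\mathbf{Z}\to0$, identifies $\mathrm{K}_{d+1}(K)/\mathrm{N}_{d+1}(L/K)$ with $\mathrm{H}^{d+2}(K,T)$, where $T:=\check T\otimes\mathbf{Z}(d+1)$. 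For the cyclic module $\check T$ this cokernel is detected by cup product with $\chi$: I would first establish the exact sequence
\[ \mathrm{K}_{d+1}(L)\xrightarrow{\ \mathrm{N}_{L/K}\ }\mathrm{K}_{d+1}(K)\xrightarrow{\ \cup\,\chi\ }\mathrm{H}^{d+2}(K,\mathbf{Q}/\mathbf{Z}(d+1)), \]
thereby reducing the proposition to showing that $x\cup\chi=0$ for every $x\in\mathrm{K}_{d+1}(K)$.

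Second, I would use the hypothesis to force $x\cup\chi$ into a Tate--Shafarevich group. For every closed point $v\in C^{(1)}$ the residue field $\kappa(v)$ gives a $\kappa(v)$-point of $C$, so $i_0(C)$ divides $[\kappa(v):k]_0$; combined with $[l:k]\mid i_0(C)$ this yields $e\mid[\kappa(v):k]_0$. As finite fields have a unique extension of each degree, the unramified extension $l$ embeds into every residue field $\kappa(v)$ and, by Hensel's lemma applied to the complete field $K_v$, into every completion $K_v$. Hence $L\otimes_K K_v\cong K_v^{\,e}$ splits completely at each place, $\chi_v=0$ for all $v$, and therefore $(x\cup\chi)_v=x_v\cup\chi_v=0$. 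Thus $x\cup\chi\in\Sha^{d+2}(K,\mathbf{Q}/\mathbf{Z}(d+1))$ for every $x$; equivalently, all local norm maps are surjective and $\mathrm{H}^{d+2}(K,T)=\Sha^{d+2}(K,T)$.

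Third, I would annihilate this class via Theorem \ref{thm B}. Since $\chi$ is inflated from $k$ and restricts to $0$ on $L\supseteq l$, the projection formula gives $\mathrm{res}_{L/K}(x\cup\chi)=x|_L\cup\chi|_L=0$ in $\mathrm{H}^{d+2}(L,\mathbf{Q}/\mathbf{Z}(d+1))$. But $L/K$ is purely unramified, so by Theorem \ref{thm B} together with $r(C_l)=r(C)$ (Remark \ref{rmk invariance rC under tot nr}) the restriction map $\Sha^{d+2}(K,\mathbf{Q}/\mathbf{Z}(d+1))\to\Sha^{d+2}(L,\mathbf{Q}/\mathbf{Z}(d+1))$ is an isomorphism, in particular injective. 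Therefore $x\cup\chi=0$ for all $x$, and the cup-product sequence gives $\mathrm{N}_{d+1}(L/K)=\mathrm{K}_{d+1}(K)$.

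The crux is this last step. Naive local--global bookkeeping is powerless here: the class $x\cup\chi$ is $e$-torsion and everywhere locally trivial, and a formal Poitou--Tate count would even suggest a spurious obstruction of size up to $\mathbf{Z}/e$ sitting inside $\Sha^{d+2}(K,\mathbf{Q}/\mathbf{Z}(d+1))\cong(\mathbf{Q}/\mathbf{Z})^{r(C)}$. What rescues the argument is precisely the \emph{injectivity} of restriction under purely unramified base change, that is, the stability part of Theorem \ref{thm B}; this is the genuinely arithmetic input and the main point to get right. A secondary technical task is the cyclic cup-product exact sequence itself: the nontrivial inclusion $\ker(\cup\,\chi)\subseteq\mathrm{N}_{d+1}(L/K)$ should be deduced from Lemma \ref{lemma norms Ktheory and motivic} by identifying the map $\mathrm{H}^{d+2}(K,T)\to\mathrm{H}^{d+2}(K,\mathbf{Q}/\mathbf{Z}(d+1))$ induced by $\check T\to\mathbf{Z}/e$ with $\cup\,\chi$ and verifying that it is injective.
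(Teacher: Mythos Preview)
Your argument is correct and reaches the conclusion by a genuinely different route from the paper. Both proofs begin identically: use cyclicity of $L/K$, Lemma~\ref{lemma norms Ktheory and motivic}, and Beilinson--Lichtenbaum to embed $\mathrm{K}_{d+1}(K)/\mathrm{N}_{L/K}(\mathrm{K}_{d+1}(L))\cong\mathrm{H}^{d+2}(K,T)$ into $\mathrm{H}^{d+2}(K,\mathbf{Q}/\mathbf{Z}(d+1))$, and note that $L/K$ splits completely at every $v\in C^{(1)}$. From this point the paper works with the \emph{model} Tate--Shafarevich group $\Sha_{\mathcal C}^{d+2}$, which vanishes outright by \cite[Theorem~3.3.6]{HHK2014LGGaloisCohomology}; this yields an injection of the global cokernel into the product of local cokernels over the components of the special fibre of $\mathcal C$, and these are then killed one by one via the residue/specialisation sequences and an induction on $d$. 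You instead stay with the curve Tate--Shafarevich group $\Sha^{d+2}$, which is nonzero, and annihilate the class by restricting to $L$ (where $\chi$ dies) and invoking the stability clause of Theorem~\ref{thm B} to get injectivity of $\mathrm{res}_{L/K}$ on $\Sha^{d+2}(-,\mathbf{Q}/\mathbf{Z}(d+1))$. Your approach is shorter and avoids both the choice of model and the induction on $d$, but it makes essential use of Section~\ref{sec coh compu}: the injectivity of restriction on $\Sha^{d+2}$ is exactly the ``stability under purely unramified base change'' announced in Theorem~\ref{thm B}, and without that input the class $x\cup\chi$ is merely an $e$-torsion, everywhere-locally-trivial element of $(\mathbf{Q}/\mathbf{Z})^{r(C)}$ with no obvious reason to vanish. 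The paper's proof, by contrast, is logically independent of Section~\ref{sec coh compu} altogether.
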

    \begin{proof}
        We proceed by induction on $d$. The case $d=1$ is treated in \cite[Proposition 3.3]{DiegoLuco2024KKp-adicFunction}. Consider the Galois module $\check{T}$ defined by the following exact sequence
        \[0 \to \check{T} \to \mathbf{Z}[L/K] \to \mathbf{Z} \to 0.\]
        Since the extension $L/K$ is cyclic, we can also fit $\check{T}$ in the following exact sequence
        \[ 0 \to \mathbf{Z} \to \mathbf{Z}[L/K] \to \check{T} \to 0. \]
        This gives rise to a distinguished triangle 
        \[ \mathbf{Z}(d+1) \to \mathbf{Z}[L/K] \otimes \mathbf{Z}(d+1) \to \check{T} \otimes \mathbf{Z}(d+1) \to \mathbf{Z}(d+1)[1].\]
        By the Beilinson-Lichtenbaum conjectures, the group $\mathrm{H}^{d+2}(L, \mathbf{Z}(d+1))$ is trivial. Hence, the natural arrow
        \[\Sha_{\mathcal{C}}^{d+2}(K,\check{T} \otimes \mathbf{Z}(d+1)) \to \Sha_{\mathcal{C}}^{d+3}(K,\mathbf{Z}(d+1))\]
        is injective, where $\mathcal{C}$ is a regular proper flat model of $C$ whose reduced special fibre $C_0$ is a strict normal crossing divisor. We deduce from the distinguished triangle
        \[\mathbf{Z}(d+1) \to \mathbf{Q}(d+1) \to \mathbf{Q} / \mathbf{Z} (d+1) \to \mathbf{Z}(d+1)[1]\]
        and the vanishing of the groups $\mathrm{H}^{d+2}(K,\mathbf{Q}(d+1))$ and $\mathrm{H}^{d+3}(K,\mathbf{Q}(d+1))$, which follows from \cite[Lemma 2.5 and Theorem 2.6 (c)]{Kahn2012ClassesMotivicEtaleCycles}, an isomorphism
        \[\Sha_{\mathcal{C}}^{d+2}(K, \mathbf{Q} / \mathbf{Z} (d+1)) \simeq \Sha_{\mathcal{C}}^{d+3}(K,\mathbf{Z} (d+1)).\]
        \begin{lemma}
            The group $\Sha_{\mathcal{C}}^{d+2}(K,\mathbf{Q}/\mathbf{Z} (d+1))$ is trivial.
        \end{lemma}
        \begin{proof}
            Let $\alpha \in \Sha_{\mathcal{C}}^{d+2}(K,\mathbf{Q}/\mathbf{Z} (d+1))$. There exists an $m$ such that $\alpha$ comes from $\alpha_m \in  \mathrm{H}^{d+2}(K,\mathbf{Z}/m (d+1))$. Since $d+2$ is the cohomological dimension of $K$ there are finitely many places $v$ such that the image of $\alpha_{m}$ in $\mathrm{H}^{d+2}(K_v, \mathrm{Z}/m(d+1))$ are non-zero. After enlarging $m$ we can assume that $\alpha_m $ belongs to $\Sha_{\mathcal{C}}^{d+2}(K, \mathbf{Z}/m (d+1))$. We conclude because $\Sha_{\mathcal{C}}^{d+2}(K, \mathbf{Z}/m (d+1))$ is trivial thanks to \cite[Theorem 3.3.6]{HHK2014LGGaloisCohomology}.
        \end{proof}
        Then, $\Sha_{\mathcal{C}}^{d+2}(K,\check{T} \otimes \mathbf{Z}(d+1))$ is also trivial. Note that $L/K$ is totally split at every $v \in C^{(1)}$. Then given the identifications in Lemma \ref{lemma norms Ktheory and motivic} we get an injection
        \[ \mathrm{K}_{d+1}(K) / \mathrm{im}(\mathrm{N}_{L/K}) \hookrightarrow \prod_{v \in \mathcal{C}^{(1)} \setminus C^{(1)}} \mathrm{K}_{d+1}(K_v) / \mathrm{im}(\mathrm{N}_{L\otimes K_v/K_v}). \]
        Then it is enough to prove that $\mathrm{K}_{d+1}(K_v) / \mathrm{im}(\mathrm{N}_{L\otimes K_v/K_v})$ is trivial for every $v \in  \mathcal{C}^{(1)} \setminus C^{(1)}$. \par
        Fix $v \in  \mathcal{C}^{(1)} \setminus C^{(1)}$. The residue map induces an exact sequence
        \begin{equation} \label{exseq residue index}
            0 \to \frac{\mathrm{U}_{d+1}(K_v)}{\mathrm{U}_{d+1}(K_v) \cap  \mathrm{im}(\mathrm{N}_{L \otimes K_v/K_v})} \to \frac{\mathrm{K}_{d+1}(K_v)}{ \mathrm{im}(\mathrm{N}_{L\otimes K_v/K_v})} \to \frac{\mathrm{K}_{d}(k^{(1)}(C_v))}{ \mathrm{im}(\mathrm{N}_{l^{(1)}(C_v)/k^{(1)}(C_v)})} \to 0
        \end{equation}
        where $C_v$ is the irreducible component of $C_0$ corresponding to $v$. We can apply \cite[Lemma 4.6]{Wittenberg2015KKQp} to deduce that $[l: k]_{d-1}$ divides $i_0(C_v)$. Applying the induction hypothesis we know that the rightmost group in \eqref{exseq residue index} is trivial. We are reduced to prove that $\mathrm{U}_{d+1}(K_v) \subseteq  \mathrm{im}(\mathrm{N}_{L\otimes K_v/K_v})$. Since the extension $L/K$ is unramified at $v$, we get the following commutative diagram with exact rows
        \begin{equation}\label{exseq norms and unit groups index}
            \begin{tikzcd}
                0 \ar[r] & \mathrm{U}_{d+1}^1(L\otimes K_v) \ar[r] \ar[d] & \mathrm{U}_{d+1}(L\otimes K_v) \ar[r] \ar[d, "\mathrm{N}_{L\otimes K_v/K_v}"] & \mathrm{K}_{d+1}(l^{(1)}(C_v))^{\oplus [L:K]} \ar[r] \ar[d, "\oplus \mathrm{N}_{l^{(1)}(C_v)/k^{(1)}(C_v)}"] & 0 \\
                0 \ar[r] & \mathrm{U}_{d+1}^1(K_v) \ar[r]  & \mathrm{U}_{d+1}(K_v) \ar[r]  & \mathrm{K}_{d+1}(k^{(1)}(C_v)) \ar[r] & 0.
            \end{tikzcd}
        \end{equation}
        The leftmost vertical arrow of \eqref{exseq norms and unit groups index} is surjective because $\mathrm{U}_{d}^1(K_v)$ is divisible. The rightmost vertical arrows of \eqref{exseq norms and unit groups index} is also surjective because $k^{(1)}(C_v)$ has the $C_{0}^{d+1}$ property. We deduce that $\mathrm{U}_{d+1}(L\otimes K_v) \to \mathrm{U}_{d+1}(K_v)$ is also surjective, which concludes the proof.
    \end{proof}
    \subsection*{Step 2: Solving the problem locally}
    In this step we prove the analogue result to Theorem \ref{thm chi2 torsion} for certain completions of $K$.
    \begin{prop}\label{prop local solution 1}
        Let $l/k$ a purely unramified finite extension. Set $K_0 := k(\!(t)\!)$ and $L_0 := l(\!(t)\!)$. Let $Z$ be a proper $K_0$-variety. Then the quotient
        \[ \mathrm{K}_{d+1}(K_0) \big/ \left\langle \mathrm{N}_{d+1}(L_0/K_0), \mathrm{N}_{d+1}(Z/K_0)\right\rangle\]
        is $\chi_{k}(Z,E)$-torsion for every coherent sheaf $E$ on $Z$.
    \end{prop}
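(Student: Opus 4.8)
The plan is to filter $\mathrm{K}_{d+1}(K_0)$ by the unit subgroups attached to the discrete valuation of $K_0=k(\!(t)\!)$, whose residue field is the $d$-local field $k$, and to control the quotient by $\mathcal{N}:=\langle \mathrm{N}_{d+1}(L_0/K_0),\mathrm{N}_{d+1}(Z/K_0)\rangle$ one graded piece at a time. Write $\chi$ for the Euler characteristic $\chi_{K_0}(Z,E)$; the subscript $k$ in the statement refers to the same number computed on a special fibre, as explained below. I would use the two exact sequences
\[ 0\to \mathrm{U}_{d+1}(K_0)\to \mathrm{K}_{d+1}(K_0)\xrightarrow{\partial}\mathrm{K}_d(k)\to 0,\qquad 0\to \mathrm{U}^1_{d+1}(K_0)\to \mathrm{U}_{d+1}(K_0)\xrightarrow{s}\mathrm{K}_{d+1}(k)\to 0. \]
First I dispose of the unit part $\mathrm{U}_{d+1}(K_0)$ using only norms from $L_0$. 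Since $k$ has characteristic $0$ (it contains the $p$-adic field $k^{(d-1)}$ as an iterated residue field), the group $\mathrm{U}^1_{d+1}(K_0)$ is $\ell$-divisible for every prime $\ell$, hence divisible; as $L_0/K_0$ is unramified, $\mathrm{N}_{L_0/K_0}\circ\mathrm{res}=[l:k]\cdot\mathrm{id}$ shows the cokernel of $\mathrm{N}_{L_0/K_0}$ on $\mathrm{U}^1$ is divisible and killed by $[l:k]$, so it vanishes and $\mathrm{U}^1_{d+1}(K_0)\subseteq\mathcal{N}$. Moreover $\mathrm{cd}(k)=d+1$, so $k$ satisfies $C_0^{d+1}$ and $\mathrm{N}_{l/k}\colon\mathrm{K}_{d+1}(l)\to\mathrm{K}_{d+1}(k)$ is surjective; feeding this through the specialisation square recalled in Section~\ref{sec preliminary} (ramification index $1$) gives $s\bigl(\mathrm{N}_{L_0/K_0}(\mathrm{U}_{d+1}(L_0))\bigr)=\mathrm{K}_{d+1}(k)$. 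A short diagram chase then yields $\mathrm{U}_{d+1}(K_0)\subseteq\mathcal{N}$.

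It remains to bound the image of $\mathrm{K}_d(k)\cong\mathrm{K}_{d+1}(K_0)/\mathrm{U}_{d+1}(K_0)$ in the quotient, i.e. to prove $\chi\cdot\mathrm{K}_d(k)\subseteq\partial(\mathcal{N})$. Here I would fix a proper flat model $\mathcal{Z}$ of $Z$ over $k[\![t]\!]$ together with a coherent sheaf $\tilde E$ on $\mathcal{Z}$, flat over $k[\![t]\!]$, with generic fibre $E$; invariance of the Euler characteristic in a flat proper family gives $\chi=\chi_k(Z_0,\tilde E_0)$, where $Z_0$ is the special fibre, a proper $k$-variety. The compatibility of $\partial$ with norms (Section~\ref{sec preliminary}) gives $\partial(\mathrm{N}_{d+1}(L_0/K_0))=\mathrm{N}_d(l/k)$ and, after matching the residue fields $\overline{\kappa(z)}$ of closed points $z\in Z$ with residue fields of closed points of $Z_0$, the inclusion $\partial(\mathrm{N}_{d+1}(Z/K_0))\supseteq\mathrm{N}_d(Z_0/k)$. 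Hence $\partial(\mathcal{N})\supseteq\langle\mathrm{N}_d(l/k),\mathrm{N}_d(Z_0/k)\rangle$, and it suffices to show that $\mathrm{K}_d(k)/\langle\mathrm{N}_d(l/k),\mathrm{N}_d(Z_0/k)\rangle$ is $\chi$-torsion.

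This last statement is the proposition one level down. By Cohen's structure theorem the equal-characteristic-zero field $k$ is isomorphic to $k^{(1)}(\!(t)\!)$, where $k^{(1)}$ is a $(d-1)$-local field whose $(d-2)$-th residue field is the $p$-adic field $k^{(d-1)}$, and likewise $l\cong l^{(1)}(\!(t)\!)$ with $l^{(1)}/k^{(1)}$ purely unramified; the variety $Z_0$ is proper over $k$ and $\chi_k(Z_0,\tilde E_0)=\chi$. So I would conclude by induction on $d$, the base case $d=1$ (where $k$ is a $p$-adic field, after one further reduction to the multiplicative group $\mathrm{K}_1$) being the local input already available from \cite{Wittenberg2015KKQp, DiegoLuco2024KKp-adicFunction}. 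I expect the main obstacle to be the geometric input in the third paragraph: showing that $\partial(\mathrm{N}_{d+1}(Z/K_0))$ really dominates $\mathrm{N}_d(Z_0/k)$. This requires lifting closed points of the special fibre $Z_0$ to horizontal curves in $\mathcal{Z}$ with controlled residue field — e.g. via Hensel's lemma at smooth points, together with \cite[Lemma 4.6]{Wittenberg2015KKQp} to track the residual degrees — and verifying that the points so obtained generate the full norm group $\mathrm{N}_d(Z_0/k)$.
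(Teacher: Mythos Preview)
Your treatment of the unit filtration is correct and in fact cleaner than the paper's: you observe that $\mathrm{U}_{d+1}(K_0)\subseteq\mathrm{N}_{d+1}(L_0/K_0)$ outright (using divisibility of $\mathrm{U}^1$ in characteristic~$0$ and the $C_0^{d+1}$ property of $k$), whereas the paper only bounds $\mathrm{U}_{d+1}(K_0)/\bigl(\mathrm{U}_{d+1}(K_0)\cap\mathrm{N}_{d+1}(Z/K_0)\bigr)$ by the multiplicity $m$ of the special fibre. Your route, if it went through, would avoid the identity $m\cdot\chi(D,\mathcal{O}_D)=\chi(Z,\mathcal{O}_Z)$ of \cite{EsnaultLevineWittenberg2015Index} entirely.

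The genuine gap is precisely the step you flag as the ``main obstacle''. The inclusion $\partial(\mathrm{N}_{d+1}(Z/K_0))\supseteq\mathrm{N}_d(Z_0/k)$ is the content of \cite[Lemme~4.5]{Wittenberg2015KKQp}, and its proof requires the model $\mathcal{Z}$ to be \emph{regular}: one lifts a closed point $y\in Z_0$ to a horizontal curve by cutting down with a regular sequence in the regular local ring $\mathcal{O}_{\mathcal{Z},y}$. Hensel's lemma at smooth points is not enough, because closed points of $Z_0$ need not be smooth points of $\mathcal{Z}\to\Spec k[\![t]\!]$, and \cite[Lemme~4.6]{Wittenberg2015KKQp} concerns index computations rather than lifting. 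Since an arbitrary proper $K_0$-variety $Z$ (possibly reducible or non-reduced) does not admit a regular proper flat model, the paper does not argue directly: it invokes the d\'evissage principle \cite[Proposition~2.1]{Wittenberg2015KKQp}, checking assumptions (1)--(3) with property $(\mathrm{P})=$ ``admits an irreducible regular proper flat model over $\mathcal{O}_k$''. Assumption~(3) is supplied by Gabber--de Jong's alteration theorem, and assumption~(2) is the computation you sketch, but carried out only for $Z$ with a regular model (whence $Y=mD$, facts (a), (b), (c), and the combination $m\chi(D,\mathcal{O}_D)=\chi(Z,\mathcal{O}_Z)$). Your argument can be salvaged by inserting this same d\'evissage layer: restrict to $Z$ with a regular model, apply your unit-part trick and \cite[Lemme~4.5]{Wittenberg2015KKQp}, then run the induction with the flat extension $\tilde E$; but as written the proposal does not prove the proposition for general proper $Z$.
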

    \begin{proof}
        We proceed by induction of $d$. The case $d=1$ corresponds to \cite[Proposition 3.5]{DiegoLuco2024KKp-adicFunction}. For each proper $K_0$-scheme $X$ we denote by $n_X$ the exponent of the group 
        \[\mathrm{K}_{d+1}(K_0)\big/\left\langle \mathrm{N}_{d+1}(L_0/K_0), \mathrm{N}_{d+1}(X/K_0)\right\rangle.\] 
        We say that $X$ satisfies property $(\mathrm{P})$ if it admits an irreducible, regular, proper, flat model over $\mathcal{O}_{k}$. It is enough to check assumptions $(1), (2)$ and $(3)$ from \cite[Proposition 2.1]{Wittenberg2015KKQp}. \par
        Assumption $(1)$ is obvious. Assumption $(3)$ is a direct consequence of Gabber and de Jong's theorem, see \cite[Introduction Theorem 3]{TravauxGabber}. It remains to check assumption $(2)$. Consider a proper $K_0$-scheme $Z$ together with an irreducible, regular, proper, flat model $\mathcal{Z}$ over $\mathcal{O}_k$. Denote the special fibre of $\mathcal{Z}$ by $Y$. Let $m$ be the multiplicity of $Y$ and $D$ an effective divisor on $\mathcal{Z}$ such that $Y=mD$. \par 
        Just as in the proof of \cite[Proposition 3.5]{DiegoLuco2024KKp-adicFunction} we have an exact sequence
        \begin{equation} \label{exseq residue in Ktheory}
            0 \to \frac{\mathrm{U}_{d+1}(K_0)}{\mathrm{U}_{d+1}(K_0) \cap \mathrm{N}_{d+1}(Z/K_0)} \to \frac{\mathrm{K}_{d+1}(K_0)}{\mathrm{N}_{d+1}(Z/K_0)} \to \frac{\mathrm{K}_{d}(k)}{\partial(\mathrm{N}_{d+1}(Z/K_0))} \to 0.
        \end{equation}
        Moreover,
        \begin{enumerate}
            \item[(a)] Since $k$ satisfies the $C_0^{d+1}$ property we can follow the proof of \cite[Lemma 4.4]{Wittenberg2015KKQp} to prove that the leftmost group in \eqref{exseq residue in Ktheory} is killed by $m$
            \item[(b)] The proof of \cite[Lemma 4.5]{Wittenberg2015KKQp} still holds in this context. Then we have $\partial(\mathrm{N}_{d+1}(Z/K_0)) = \mathrm{N}_{d}(Y/k) = \mathrm{N}_{d}(D/k)$.
            \item[(c)] By the induction hypothesis we know that 
            \[ \mathrm{K}_{d}(k) \big/ \langle \mathrm{N}_{d}(l/k), \mathrm{N}_{d}(D/k) \rangle\]
            is killed by $\chi(D,\mathcal{O}_D)$. 
        \end{enumerate}
        Let $\alpha \in \mathrm{K}_{d+1}(K_0)$. Then thanks to fact (c) we have 
        \[\chi(D,\mathcal{O}_D) \cdot \partial(\alpha)  \in \left\langle \mathrm{N}_{d}(l / k), \mathrm{N}_{d}(D/k) \right\rangle.\] 
        From the surjectivity of the residue map, fact (b) and the compatibility of the residue map with the norm map we can find $\beta \in \mathrm{N}_{d+1}(Z/K_0)$ and $\gamma \in \mathrm{N}_{d+1}(L_0 / K_0)$ such that $\chi(D,\mathcal{O}_D) \cdot \alpha - (\beta + \gamma) \in \mathrm{U}_{d+1}(K_0)$. We deduce that 
        \[ m \chi(D,\mathcal{O}_D) \cdot \alpha \in \left\langle\mathrm{N}_{d+1}(L_0 / K_0), \mathrm{N}_{d+1}(Z/K_0)\right\rangle\]
        using fact (a). Finally, we conclude noting that $m \chi(D, \mathcal{O}_D) = \chi(Z, \mathcal{O}_Z)$ thanks to \cite[Proposition 2.4]{EsnaultLevineWittenberg2015Index}.
    \end{proof}

    \subsection*{Step 3: Globalising the local field extensions.}
    The proof of \cite[Proposition 3.6]{DiegoLuco2024KKp-adicFunction} proves the following slightly more general result.
    \begin{prop}\label{prop globalising local extensions}
        Let $F$ be a Hilbertian field, $Z$ a smooth geometrically integral $F$-variety and $T$ a finite set of distinct discrete valuations on $K$. Fix a separable finite extension $F'/F$ and for each $w \in T$ a finite separable extension $M^{(w)}/F_w$ such that $Z(M^{(w)}) \neq \emptyset$, where $F_w$ represents the completion of $F$ with respect to $w$. Then there exists a finite extension $M/K$ satisfying the following properties:
        \begin{enumerate}
            \item $Z(M)\neq \emptyset,$
            \item For each $w \in T$ there exists a $K$-embedding $M \hookrightarrow M^{(w)}$, and
            \item The extension $F'/F$ and $M/F$ are linearly disjoint.
        \end{enumerate}
    \end{prop}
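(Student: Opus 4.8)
The plan is to follow the Moret--Bailly-style globalisation argument used in the proof of \cite[Proposition 3.6]{DiegoLuco2024KKp-adicFunction}, adding only the linear disjointness condition (3). After replacing $Z$ by a dense open affine subscheme (the local points survive, since on a smooth irreducible variety a nonempty $w$-adic neighbourhood of a given $M^{(w)}$-point is Zariski dense and hence meets any dense open) we may assume $Z$ affine; the case $\dim Z = 0$ is immediate with $M = F$, so assume $n := \dim Z \geq 1$. As $Z$ is smooth and geometrically integral, $F(Z)$ is separably generated over $F$, so it is a finite separable extension of some rational subfield $F(t_1, \ldots, t_n)$, and a primitive element gives a polynomial $P(\mathbf{t}, Y) \in F[\mathbf{t}][Y]$, separable and irreducible over $F(\mathbf{t})$, with $F(Z) = F(\mathbf{t})[Y]/(P)$; geometrically this is a dominant, generically finite, generically étale $F$-map from a dense open of $Z$ to $\mathbf{A}^n_F$. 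The candidate fields are $M := F[Y]/(P(\mathbf{a}, Y))$ for $\mathbf{a} \in F^n$ in the étale locus: whenever $P(\mathbf{a}, Y)$ is irreducible over $F$ the associated point of $Z$ is closed with residue field $M$, so $Z(M) \neq \emptyset$ and condition (1) is automatic.

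For the local conditions I would, for each $w \in T$, use the point $P_w \in Z(M^{(w)})$ together with smoothness to produce a nonempty $w$-adic open $U_w \subseteq F_w^n$ inside the étale locus such that for every $\mathbf{a} \in U_w$ the polynomial $P(\mathbf{a}, Y)$ has a simple root in $M^{(w)}$. Once one exhibits a single base point in $\mathbf{A}^n(F_w)$ whose fibre carries an $M^{(w)}$-point with nonvanishing derivative, Krasner's lemma (equivalently Hensel's lemma over $M^{(w)}$) guarantees that all sufficiently close $\mathbf{a}$ share this property, and any such root yields an $F$-embedding $M \hookrightarrow M^{(w)}$, which is condition (2). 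Producing such an $F_w$-rational base point out of the merely $M^{(w)}$-rational point $P_w$ is the technical heart of the statement, and is exactly what the fibration method of Moret--Bailly in \cite[Proposition 3.6]{DiegoLuco2024KKp-adicFunction} supplies; I would invoke it verbatim.

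It remains to globalise and to secure disjointness. Since $F$ is Hilbertian, Hilbert's irreducibility theorem combined with approximation at the finitely many independent discrete valuations in $T$ yields $\mathbf{a} \in F^n$ lying in every $U_w$ and making $P(\mathbf{a}, Y)$ irreducible over $F$; this gives conditions (1) and (2) simultaneously. For condition (3) I would additionally require $P(\mathbf{a}, Y)$ to be irreducible over $F'$. This is legitimate and compatible with the previous constraints: because $Z$ is geometrically integral, $Z_{F'}$ is still integral, so $P(\mathbf{t}, Y)$ remains irreducible over $F'(\mathbf{t})$; as $F'/F$ is finite, $F'$ is again Hilbertian, and ``$P(\mathbf{a}, Y)$ irreducible over $F'$'' is a further Hilbert-type (co-thin) condition on $\mathbf{a} \in F^n$. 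For such $\mathbf{a}$ the field $M = F[Y]/(P(\mathbf{a}, Y))$ has degree $[F(Z):F(\mathbf{t})]$ over $F$ and $M \otimes_F F' = F'[Y]/(P(\mathbf{a}, Y))$ is a field, which is precisely the linear disjointness of $M/F$ and $F'/F$.

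The main obstacle is the local step of the second paragraph: guaranteeing that each $U_w$ is nonempty, i.e. that $M^{(w)}$ is realised by the fibre of the chosen map over an $F_w$-rational base point. A naive projection need not send $P_w$ to an $F_w$-rational point, since $F_w$ is a proper closed subfield of $M^{(w)}$ and hence $w$-adically far from most of $Z(M^{(w)})$; circumventing this requires the Moret--Bailly fibration together with induction on $\dim Z$, as carried out in \cite[Proposition 3.6]{DiegoLuco2024KKp-adicFunction}. By contrast, the genuinely new ingredient here---disjointness from $F'$---is comparatively soft, being an immediate consequence of the geometric integrality of $Z$ and the Hilbertianity of $F'$.
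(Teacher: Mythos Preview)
Your proposal is correct and aligns with the paper's own treatment: the paper gives no independent argument but simply asserts that the proof of \cite[Proposition~3.6]{DiegoLuco2024KKp-adicFunction} already establishes this slightly more general statement. Your outline is exactly a sketch of that argument together with the one extra observation needed for condition~(3), namely that geometric integrality of $Z$ forces $P(\mathbf{t},Y)$ to stay irreducible over $F'(\mathbf{t})$, so that irreducibility of $P(\mathbf{a},Y)$ over $F'$ is an additional Hilbert condition on $\mathbf{a}\in F^n$ which can be imposed simultaneously with the others; this is precisely the ``slightly more general'' part the paper leaves implicit.
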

    
    \subsection*{Step 4: Computation of a Tate-Shafarevich group} 
    We can prove an analogue of \cite[Proposition 3.7]{DiegoLuco2024KKp-adicFunction}  exactly with the same proof. We will state it here for ease of reference.
    \begin{prop}\label{prop horrible}
        Let $r \geq 2$ be an integer and $L, K_1 \cdots, K_r$ be finite extensions of $K$ in $\overline{K}$. Consider the composite $K_{I} :=K_1 \dots K_r$ and $K_{\hat{i}}:= K_{1} \dots K_{i-1} K_{i+1} \cdots K_r$ for every $i\in \{1, \cdots r \}$. Denote by $n$ the degree of $L/K$. Consider the Galois module $\hat{T}$ defined by the exact sequence
        \[0 \to \mathbf{Z} \to \mathbf{Z}[E/K] \to \hat{T} \to 0,\]
        where $E:= L \times K_1 \times \cdots \times K_r$. Given two positive integers $m$ and $m'$, make the following assumptions:
        \begin{itemize}
            \item[(LD1)] The Galois closure of $L/K$ and the extension $K_I/K$ are linearly disjoint.
            \item[(LD2)] For each $i \in \{1, \cdots , r \}$ the fields $K_i$ and $K_{\hat{i}}$ are linearly disjoint over $K$.
            \item[(H1)] The restriction map \[\Sha^2(K,\hat{T}) \to \Sha^2(L,\hat{T}) \oplus \Sha^2(K_I,\hat{T})\] is injective.
            \item[(H2)] The restriction map \[\mathrm{res}_{L K_I /K_I}: \Sha^2(K_I,\hat{T}) \to \Sha^2(LK_I,\hat{T})\] is surjective and the kernel is $m$-torsion.
            \item[(H3)] For each $i$ the restriction maps \[\mathrm{res}_{L K_i /K_i}: \Sha^2(K_i,\hat{T}) \to\Sha^2(LK_i,\hat{T}) \qquad \mathrm{res}_{L K_{\hat{i}} /K_{\hat{i}}}: \Sha^2(K_{\hat{i}},\hat{T}) \to\Sha^2(LK_{\hat{i}},\hat{T}) \] are surjective.
            \item[(H4)] For each finite extension $L'$ of $L$ contained in the Galois closure $L/K$, the kernel of the restriction map \[\mathrm{res}_{L' K_I /L}: \Sha^2(L',\hat{T})\to\Sha^2(L'K_I,\hat{T})\] is $m'$-torsion.
        \end{itemize}
        Then $\overline{\Sha^2(K, \hat{T})}$ is $((m \vee m') \wedge n)$-torsion.
    \end{prop}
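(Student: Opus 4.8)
The plan is to establish two independent annihilation statements and combine them arithmetically. Interpreting $\vee$ and $\wedge$ as least common multiple and greatest common divisor, we have $(m\vee m')\wedge n=\gcd(\operatorname{lcm}(m,m'),n)$; since a B\'ezout relation $\gcd(a,b)=\lambda a+\mu b$ shows that any abelian group annihilated both by $a$ and by $b$ is annihilated by $\gcd(a,b)$, it suffices to prove separately that $\overline{\Sha^2(K,\hat T)}$ is $n$-torsion and that it is $\operatorname{lcm}(m,m')$-torsion. Note that $n$-torsion alone does not imply $\operatorname{lcm}(m,m')$-torsion, so both bounds are genuinely needed.

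For the $n$-torsion bound I would exploit the factor $L$ of $E$. Write $P:=\mathbf Z[E/K]=\mathbf Z[L/K]\oplus\bigoplus_{i=1}^r\mathbf Z[K_i/K]$, so that the inclusion $\mathbf Z\hookrightarrow P$ defining $\hat T$ is the diagonal. After restriction to $G_L$, the tautological embedding $L\hookrightarrow\overline K$ is a $G_L$-fixed element of $\operatorname{Hom}_K(L,\overline K)$, and projecting $P_L$ onto the corresponding basis vector yields a $G_L$-equivariant retraction of $\mathbf Z\hookrightarrow P_L$. Hence the defining sequence of $\hat T$ splits over $L$ and $\hat T_L$ is a direct summand of the permutation module $P_L$. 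By the divisibility of the Tate--Shafarevich groups of permutation modules recorded after Poitou--Tate duality, $\Sha^2(L,P_L)$ is divisible, hence so is its direct summand $\Sha^2(L,\hat T)$. Therefore, for any $\alpha\in\Sha^2(K,\hat T)$ the class $\operatorname{res}_{L/K}(\alpha)$ is divisible, and $n\alpha=\operatorname{cor}_{L/K}\operatorname{res}_{L/K}(\alpha)$ is the corestriction of a divisible element, hence divisible. Thus $\overline{\Sha^2(K,\hat T)}$ is $n$-torsion, and this step uses none of the standing hypotheses.

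For the $\operatorname{lcm}(m,m')$-torsion bound the hypotheses finally enter. The same splitting argument, applied with any factor $K_i\subseteq K_I$ in place of $L$, shows that $\Sha^2(K_I,\hat T)$, $\Sha^2(LK_I,\hat T)$ and the analogous groups over the composites $K_i$ and $K_{\hat i}$ are all divisible; combined with the divisibility of $\Sha^2(K,P)$, the connecting homomorphism of the defining sequence relates $\overline{\Sha^2(K,\hat T)}$ to $\Sha^3(K,\mathbf Z)$. The heart of the matter is then a diagram chase across the restriction maps: (H1) detects a class $\alpha$ through the pair $(\operatorname{res}_L\alpha,\operatorname{res}_{K_I}\alpha)$; (H2) bounds the kernel of $\operatorname{res}_{LK_I/K_I}$ by $m$; (H3) supplies the surjectivities needed to lift classes compatibly along the $K_i$ and the $K_{\hat i}$; and (H4) bounds by $m'$ the kernels of restriction over the subextensions of the Galois closure of $L/K$. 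The linear disjointness assumptions (LD1) and (LD2) are used at each stage to guarantee that base change along the composites splits the relevant permutation modules in the expected way and that the Galois closure of $L/K$ meets $K_I$ trivially, so that the $m$-torsion and $m'$-torsion contributions assemble, via an inclusion--exclusion over the subfields $K_i$, into a single annihilator $\operatorname{lcm}(m,m')$.

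The main obstacle is precisely this last diagram chase: organising the inclusion--exclusion over $K_1,\dots,K_r$ so that the kernel bounds from (H2) and (H4) and the surjectivities from (H3) combine into the single exponent $\operatorname{lcm}(m,m')$, while invoking (LD1) and (LD2) at each step to identify the restriction maps and to split the permutation modules after base change. By contrast, the $n$-torsion estimate is essentially formal once the splitting of $\hat T$ over $L$ is noticed, and the final B\'ezout step reducing the two bounds to $(m\vee m')\wedge n$ is immediate.
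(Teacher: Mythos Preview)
Your overall plan is the right one: show separately that $\overline{\Sha^2(K,\hat T)}$ is $n$-torsion and that it is $(m\vee m')$-torsion, then combine the two bounds via a B\'ezout identity to get $((m\vee m')\wedge n)$-torsion. Your $n$-torsion argument is correct and clean: the defining sequence splits over $L$, so $\hat T_L$ is a direct summand of the permutation module $\mathbf Z[E/K]_L$; the paper's remark after Poitou--Tate duality makes $\Sha^2(L,\hat T)$ divisible; and then $n\alpha=\operatorname{cor}_{L/K}\operatorname{res}_{L/K}(\alpha)$ lands in the image of a divisible group, hence in the maximal divisible subgroup. For the record, the paper itself does not write out a proof here either: it simply asserts that the argument of \cite[Proposition~3.7]{DiegoLuco2024KKp-adicFunction} carries over verbatim.

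The genuine gap is the $(m\vee m')$-torsion half. What you have written there is a list of ingredients, not an argument; you say yourself that ``the main obstacle is precisely this last diagram chase'' and then do not perform it. Several points are left unexplained. First, (H1) gives an injection of $\Sha^2(K,\hat T)$ into a divisible group, but a subgroup of a divisible group need not be divisible, so (H1) by itself yields nothing about $\overline{\Sha^2(K,\hat T)}$; you have to say what you actually do with it. Second, nothing in your sketch indicates the role of the surjectivities in (H3), nor why (H4) is stated for \emph{every} intermediate $L'$ between $L$ and its Galois closure rather than just for $L'=L$; in the argument of \cite{DiegoLuco2024KKp-adicFunction} both features are load-bearing, and an ``inclusion--exclusion over the $K_i$'' does not account for them. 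Third, you gesture at (LD1)--(LD2) but never use them in a concrete step. Until the chase is actually written down---with the maps named, the divisibilities and torsion bounds invoked at identified places, and the roles of (H3), (H4), (LD1), (LD2) made explicit---the $(m\vee m')$-torsion bound, and hence the proposition, is not proved in your write-up.
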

    \begin{lemma}\label{lemma torsion of kernel and surj shas}
        Let $l/k$ be a finite purely unramified extension of degree $n$ and set $L := lK$. The restriction map $\mathrm{res}_{L/K}: \Sha^2(K, \mathbf{Z}) \to \Sha^2(L, \mathbf{Z})$ is surjective and its kernel is $(i_{0}(C) \wedge n)$-torsion.
    \end{lemma}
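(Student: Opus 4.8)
The plan is to reduce everything to the character group $\mathrm{H}^1(-,\mathbf{Q}/\mathbf{Z})$. From the exact sequence $0\to\mathbf{Z}\to\mathbf{Q}\to\mathbf{Q}/\mathbf{Z}\to 0$ and the vanishing of the higher Galois cohomology of the uniquely divisible module $\mathbf{Q}$ one gets a natural, localisation-compatible isomorphism $\mathrm{H}^2(F,\mathbf{Z})\simeq\mathrm{H}^1(F,\mathbf{Q}/\mathbf{Z})$, hence $\Sha^2(F,\mathbf{Z})\simeq\Sha^1(F,\mathbf{Q}/\mathbf{Z})$ for $F\in\{K,L\}$. Since $l/k$ is purely unramified its whole degree is residual, so $l/k$ is determined by, and has the same degree as, the cyclic extension of finite residue fields $l^{(d)}/k^{(d)}$; consequently $G:=\mathrm{Gal}(L/K)\simeq\mathrm{Gal}(l/k)$ is cyclic of order $n$, and every intermediate field of $L/K$ is a constant (unramified) extension $l'K/K$.

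For the kernel I would use inflation--restriction. As $G$ is cyclic and $\mathbf{Q}/\mathbf{Z}$ is divisible we have $\mathrm{H}^2(G,\mathbf{Q}/\mathbf{Z})=0$, so the five-term sequence identifies $\ker(\mathrm{res}\colon\mathrm{H}^1(K,\mathbf{Q}/\mathbf{Z})\to\mathrm{H}^1(L,\mathbf{Q}/\mathbf{Z}))$ with $\mathrm{Hom}(G,\mathbf{Q}/\mathbf{Z})$, a cyclic group of order $n$ whose elements cut out exactly the constant subextensions of $L/K$. A character $\chi$ of order $e\mid n$ corresponds to the constant extension $l_eK/K$ of degree $e$, which splits at a place $v\in C^{(1)}$ precisely when $e\mid[\kappa(v):k]_0$. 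Hence $\chi\in\Sha^1(K,\mathbf{Q}/\mathbf{Z})$ iff $e$ divides $[\kappa(v):k]_0$ for every $v\in C^{(1)}$, i.e. iff $e\mid i_{0}(C)$, using that $i_{0}(C)=\gcd_{v\in C^{(1)}}[\kappa(v):k]_0$ (each closed point gives a point over its residue field, and any point over $k'$ factors through a closed point $v$ with $[\kappa(v):k]_0\mid[k':k]_0$). Thus the kernel is the subgroup of $\mathrm{Hom}(G,\mathbf{Q}/\mathbf{Z})\simeq\mathbf{Z}/n$ killed by $\gcd(i_{0}(C),n)$, i.e. it is $(i_{0}(C)\wedge n)$-torsion.

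For surjectivity I would reduce to two facts. First, $\Sha^2(L,\mathbf{Z})$ is divisible: by Shapiro's lemma $\Sha^2(L,\mathbf{Z})\simeq\Sha^2(K,\mathbf{Z}[L/K])$ (with matching local conditions, since the places of $C_l$ above $v$ are the places $w\mid v$), and $\mathbf{Z}[L/K]=\mathbf{Z}[E/K]$ for the étale $K$-algebra $E=L$, so this is the divisibility recalled in Section~\ref{sec preliminary}. Second, $G$ acts trivially on $\Sha^2(L,\mathbf{Z})$. Granting these, corestriction preserves the local conditions, so $\mathrm{cor}\colon\Sha^2(L,\mathbf{Z})\to\Sha^2(K,\mathbf{Z})$ is defined and $\mathrm{res}\circ\mathrm{cor}=N_G=\sum_{\sigma\in G}\sigma=n\cdot\mathrm{id}$ by $G$-triviality. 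Since $\Sha^2(L,\mathbf{Z})$ is divisible, multiplication by $n$ is onto it; writing $\psi=n\psi'$ with $\psi'\in\Sha^2(L,\mathbf{Z})$ gives $\mathrm{res}(\mathrm{cor}(\psi'))=\psi$ with $\mathrm{cor}(\psi')\in\Sha^2(K,\mathbf{Z})$, which proves surjectivity.

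The main obstacle is therefore the triviality of the $G$-action on $\Sha^2(L,\mathbf{Z})\simeq\Sha^1(L,\mathbf{Q}/\mathbf{Z})$; this is the single step that genuinely uses that $L/K$ is a \emph{constant} extension rather than an arbitrary cyclic one (the image of the global restriction map is only the $G$-invariants, so without it one cannot reach all of $\Sha^1(L)$). My approach is to track $G$-equivariance through the computations of Section~\ref{sec coh compu}. Base-changing the chosen model $\mathcal{C}$ to $\mathcal{O}_l$ yields a model $\mathcal{C}_l$ with the \emph{same} reduction graph (Remark~\ref{rmk invariance rC under tot nr}), on which $G$ acts through the second factor and hence trivially on the combinatorics; feeding this into Propositions~\ref{prop sha as kernel of residues}--\ref{prop Sha modulo the previous Shas} and the reciprocity isomorphism of Proposition~\ref{prop higher recirpocity} shows that $G$ acts trivially on each $\Sha^{d+2}(L,\mathbf{Z}/m(d+1))$, and then, via Poitou--Tate duality and the stability under purely unramified extensions recorded in Theorem~\ref{thm B}, that $G$ acts trivially on the dual divisible group $\Sha^2(L,\mathbf{Z})$. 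Making the equivariance of these identifications precise is the technical heart of the argument.
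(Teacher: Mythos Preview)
Your argument for the kernel is correct and clean: the inflation--restriction sequence identifies the kernel with those characters of $G\simeq\mathbf{Z}/n$ cutting out constant subextensions that split at every $v\in C^{(1)}$, and your computation $i_0(C)=\gcd_{v}[\kappa(v):k]_0$ gives exactly the $(i_0(C)\wedge n)$-torsion claim. This part matches what the paper's referenced argument does.

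For surjectivity, your reduction to $G$-triviality of the action on $\Sha^2(L,\mathbf{Z})$ is valid: corestriction does preserve the Tate--Shafarevich condition, the formula $\mathrm{res}\circ\mathrm{cor}=N_G$ holds, and divisibility of $\Sha^2(L,\mathbf{Z})$ is established in Section~\ref{sec preliminary}. So the logical skeleton is sound, and this is indeed in the spirit of the paper's proof, which simply imports the argument of \cite[Lemma~3.12]{DiegoLuco2024KKp-adicFunction} with Theorem~\ref{thm computation of Sha} substituted for Kato's computation. The one place where your write-up is thinner than it should be is the passage from ``$G$ acts trivially on each $\Sha^{d+2}(L,\mathbf{Z}/m(d+1))$'' to ``$G$ acts trivially on $\Sha^2(L,\mathbf{Z})$''. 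The Poitou--Tate pairing recorded in Section~\ref{sec preliminary} is between $\overline{\Sha^2(K,\hat{T})}$ and $\Sha^{d+2}(K,T)$ for lattices $\hat{T}$; for $\hat{T}=\mathbf{Z}$ the left factor is zero (you used this yourself), so that statement alone says nothing about $\Sha^2(L,\mathbf{Z})$ as a $G$-module. What you actually need is the finite-coefficient duality $\Sha^1(L,\mathbf{Z}/m)\simeq\mathrm{Hom}\bigl(\Sha^{d+2}(L,\mathbf{Z}/m(d+1)),\mathbf{Q}/\mathbf{Z}\bigr)$ (available in \cite{Diego2015LGPrincipleHigherLocal} but not restated in this paper), together with its compatibility with the $G$-action; then $G$-triviality on the right, which you correctly extract from the explicit description in Section~\ref{sec coh compu} and Remark~\ref{rmk invariance rC under tot nr}, transfers to the left, and passing to the colimit over $m$ gives the claim for $\Sha^1(L,\mathbf{Q}/\mathbf{Z})\simeq\Sha^2(L,\mathbf{Z})$. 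You should make this duality and its equivariance explicit rather than invoking ``Poitou--Tate'' generically.
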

    \begin{proof}
        After replacing every use of \cite[Corollary 2.9]{Kato1986Hasse2dim} by Proposition \ref{thm computation of Sha}, the same proof as in \cite[Lemma 3.12]{DiegoLuco2024KKp-adicFunction} works.
    \end{proof}

    \subsection*{Step 5: Proving the result for smooth varieties} 
    We now prove the statement of Theorem \ref{thm chi2 torsion}  for smooth varieties, but first we need a small lemma.
    \begin{lemma} \label{lemma residue and norms}
        Let $F_0$ be field of characteristic zero with $\mathrm{cd} (F_0) \leq d$ and $E_0/F_0$ a finite extension. Fix $E := E_0(\!(t)\!)$ and $F = F_0(\!(t)\!)$. Then the residue induces an isomorphism
        \[\partial: \mathrm{K}_{d}(F) \,/ \,\mathrm{N}_{d}( E/F) \to \mathrm{K}_{d-1}(F_0)\,/\,\mathrm{N}_{d-1}(E_0 / F_0).\]
    \end{lemma}
    \begin{proof}
        From the definition of the residue we can see directly that it is surjective. Since the residue map is compatible with norms and $\mathrm{U}_{d}(F)$ is by definition the kernel of the residue map, we can identify the kernel of $\partial$ with $\mathrm{U}_{d}(F) \big/ \mathrm{U}_{d}(F) \cap \mathrm{N}_{d}( E/F)$. We have the following commutative diagram with exact rows
        \begin{equation} \label{diag norm and units in K-theory}
            \begin{tikzcd}
                0 \ar[r] & \mathrm{U}_{d}^1(E) \ar[r] \ar[d, "\mathrm{N}_{E/F}"] & \mathrm{U}_{d}(E) \ar[r] \ar[d, "\mathrm{N}_{E/F}"] & \mathrm{K}_{d}(E_0) \ar[r] \ar[d, " \mathrm{N}_{E_0/F_0}"] & 0 \\
                0 \ar[r] & \mathrm{U}_{d}^1(F) \ar[r]  & \mathrm{U}_{d}(F) \ar[r]  & \mathrm{K}_{d}(F_0) \ar[r] & 0.
            \end{tikzcd}
        \end{equation}
        Since $\mathrm{U}_{d}^1(F)$ is divisible and the composition $\mathrm{U}_{d}^1(F) \to \mathrm{U}_{d}^1(E) \xrightarrow{\mathrm{N_{E/F}}} \mathrm{U}_{d}^1(F)$ is multiplication by $[E:F]$, the left-most vertical arrow in \eqref{diag norm and units in K-theory} is surjective. On the other hand, $\mathrm{N}_{E_0/F_0}$ is surjective because $F_0$ has the $C_0^{d}$. A direct diagram chase allows us to conclude that $ \mathrm{U}_{d}(F) \subseteq \mathrm{U}_{d}(F) \cap \mathrm{N}_{d}( E/F)$.
    \end{proof}
    This lemma generalises to several extensions as follows.
    \begin{prop}\label{prop generate residue implies generate}
        Let $F_0$ be a field of characteristic zero with $\mathrm{cd} (F_0) \leq d$ and $I$ a set of indices. Let $\{E_{0,i}\}_{i\in I}$ be a family finite extensions of $F_0$. For every $i \in I$ fix $E_i = E_{0,i}(\!(t)\!)$ and $F = F_0(\!(t)\!)$. If the subgroups $\mathrm{N}_{d-1}(E_{0,i}/F_0)$ generate $\mathrm{K}_{d-1}(F_0)$, then
        \[\mathrm{K}_d(F)  = \left\langle \mathrm{N}_d\left(E_i/F\right) \; | \; i \in I \right\rangle. \]
    \end{prop}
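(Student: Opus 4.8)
The plan is to reduce the multi-extension statement to the single-extension case established in Lemma \ref{lemma residue and norms} by exploiting the surjectivity of the residue map and the divisibility of the unit subgroup $\mathrm{U}_d^1(F)$. First I would recall that $\mathrm{N}_d(E_i/F)$ is a subgroup of $\mathrm{K}_d(F)$ for each $i$, and that the residue map $\partial: \mathrm{K}_d(F) \to \mathrm{K}_{d-1}(F_0)$ is compatible with norms. By Lemma \ref{lemma residue and norms} applied to each $E_i$, the residue identifies $\mathrm{K}_d(F)/\mathrm{N}_d(E_i/F)$ with $\mathrm{K}_{d-1}(F_0)/\mathrm{N}_{d-1}(E_{0,i}/F_0)$; more usefully, the residue sends $\mathrm{N}_d(E_i/F)$ onto $\mathrm{N}_{d-1}(E_{0,i}/F_0)$. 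Taking these together, the subgroup $G := \langle \mathrm{N}_d(E_i/F) \mid i \in I\rangle$ has residue image $\partial(G) = \langle \mathrm{N}_{d-1}(E_{0,i}/F_0) \mid i \in I\rangle = \mathrm{K}_{d-1}(F_0)$ by hypothesis.

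Given any $\alpha \in \mathrm{K}_d(F)$, since $\partial$ is surjective and $\partial(G) = \mathrm{K}_{d-1}(F_0)$, I can find $\beta \in G$ with $\partial(\beta) = \partial(\alpha)$, so that $\alpha - \beta \in \ker \partial = \mathrm{U}_d(F)$. It then suffices to show $\mathrm{U}_d(F) \subseteq G$. For this I would fix a single index $i_0 \in I$ and examine the commutative diagram with exact rows analogous to \eqref{diag norm and units in K-theory}, comparing the filtration $0 \to \mathrm{U}_d^1(F) \to \mathrm{U}_d(F) \to \mathrm{K}_d(F_0) \to 0$ with its counterpart over $E_{i_0}$. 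The specialisation quotient $\mathrm{U}_d(F)/\mathrm{U}_d^1(F) \simeq \mathrm{K}_d(F_0)$ is trivial because $F_0$ has cohomological dimension at most $d$, hence satisfies $C_0^d$ so that $\mathrm{K}_d(F_0) = \mathrm{N}_d(E_{0,i_0}/F_0)$; combined with the surjectivity of $\mathrm{N}_{E_{i_0}/F} \colon \mathrm{U}_d^1(E_{i_0}) \to \mathrm{U}_d^1(F)$ coming from the $[E_{i_0}:F]$-divisibility of $\mathrm{U}_d^1(F)$, this forces $\mathrm{U}_d(F) \subseteq \mathrm{N}_d(E_{i_0}/F) \subseteq G$, exactly as in the proof of Lemma \ref{lemma residue and norms}.

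The main obstacle, though a mild one, is bookkeeping the generation statement at the residue level: I must be careful that $\partial(G)$ equals the full subgroup generated by the $\mathrm{N}_{d-1}(E_{0,i}/F_0)$ rather than merely containing each one, which follows because $\partial$ is a group homomorphism and hence $\partial$ of a sum of norm subgroups is the sum of their images. Once the containment $\mathrm{U}_d(F) \subseteq G$ is in hand, the decomposition $\alpha = \beta + (\alpha - \beta)$ with $\beta \in G$ and $\alpha - \beta \in \mathrm{U}_d(F) \subseteq G$ shows $\alpha \in G$, yielding $\mathrm{K}_d(F) = G$ as desired. I expect no genuine difficulty here beyond correctly invoking the $C_0^d$ property of $F_0$ and the divisibility of $\mathrm{U}_d^1(F)$, both of which are already available in the setting of Lemma \ref{lemma residue and norms}.
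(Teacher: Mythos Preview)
Your proposal is correct, and in fact it is organised a bit more efficiently than the paper's own argument. The paper proceeds by induction on the size of a finite subset $J\subseteq I$ needed to express $\partial(\alpha)$, invoking Lemma~\ref{lemma residue and norms} as the base case $\#J=1$ and peeling off one extension at a time via lifted classes $\tilde\beta_j$. You instead observe directly that the proof of Lemma~\ref{lemma residue and norms} already yields the stronger containment $\mathrm{U}_d(F)\subseteq \mathrm{N}_d(E_{i_0}/F)$ for any single index $i_0$, so that once $\partial(G)=\mathrm{K}_{d-1}(F_0)$ is established the conclusion is immediate without induction. Both routes rest on the same two ingredients (compatibility of $\partial$ with norms, and the diagram~\eqref{diag norm and units in K-theory}); your packaging simply avoids the bookkeeping of the inductive step. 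One cosmetic point: your sentence ``the specialisation quotient $\mathrm{U}_d(F)/\mathrm{U}_d^1(F)\simeq \mathrm{K}_d(F_0)$ is trivial'' reads as though you claim $\mathrm{K}_d(F_0)=0$, which is false in general; what you mean (and what your subsequent clause makes clear) is that the induced map $\mathrm{K}_d(E_{0,i_0})\to\mathrm{K}_d(F_0)$ is surjective by the $C_0^d$ property, so the obstruction coming from that quotient vanishes. Rewording that sentence would make the argument read cleanly.
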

    \begin{proof}
        Consider $\alpha \in \mathrm{K}_{d}(F)$. By hypothesis there exists a finite subset $J \subseteq I$ and elements $\beta_j \in \mathrm{K}_{d-1}(E_{0,j})$ for $j \in J$ such that 
        \[\partial(\alpha) = \prod_{j \in J} \mathrm{N}_{E_{0,j}/F_0}(\beta_j).\]
        We prove by induction on the cardinality of $J$ that $\alpha$ belongs to $\left\langle \mathrm{N}_d\left(E_j/F\right) \; | \; j \in J \right\rangle$. The case $\#J =1$ follows from Lemma \ref{lemma residue and norms}. Since the residue is surjective we can find for every $j \in J$ an element $\tilde{\beta}_j \in \mathrm{K}_d(E_j)$ such that $\partial(\tilde{\beta}_j) = \beta_j$. Fix an index $j_0 \in J$. Since the norm mapsare compatible with the residue maps, we have
        \[\partial \left(\alpha - \mathrm{N}_{E_{j_0}/K}(\tilde{\beta}_{j_0})\right) \in \left\langle \mathrm{N}_{d-1}\left(E_{j,0}/F_0\right) \; | \; j \in J\setminus \{j_0\} \right\rangle.\] 
        Then by the induction hypothesis $\alpha - \mathrm{N}_{E_{j_0}/K}(\tilde{\beta}_{j_0})$ belongs to $\left\langle \mathrm{N}_d\left(E_j/F\right) \; | \; j \in J\setminus \{j_0\} \right\rangle.$ Then we conclude that $\alpha \in \left\langle \mathrm{N}_d\left(E_j/F\right) \; | \; j \in J \right\rangle$
    \end{proof}
    \begin{thm}\label{thm chisquare tosion smooth}
            Let $l/k$ be a finite purely unramified extension and set $L:= lK$. Let $Z$ be a proper smooth integral $K$-variety. Then the quotient
            \[ \mathrm{K}_{d+1}(K) \, \big/ \, \left\langle \mathrm{N}_{d+1}(L/K), \mathrm{N}_{d+1}(Z/K) \right\rangle\]
            is $\chi_{k}(Z,E)^2$-torsion for any coherent sheaf $E$ on $Z$.
        \end{thm}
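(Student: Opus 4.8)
The plan is to combine the global-to-local reduction machinery already developed in Steps 1–4 with the local solution of Step 2, using the norm-group interpretation of Milnor $\mathrm{K}$-theory from Lemma~\ref{lemma norms Ktheory and motivic} and the Poitou–Tate pairing. The target group is
\[
\mathrm{K}_{d+1}(K) \, \big/ \, \left\langle \mathrm{N}_{d+1}(L/K), \mathrm{N}_{d+1}(Z/K) \right\rangle.
\]
The first move is to replace the single variety $Z$ by a finite collection of field extensions: choose closed points $z_1, \ldots, z_r$ of $Z$ whose local degrees have gcd equal to the index of $Z$, so that by the theory of the Euler characteristic (via \cite[Proposition 2.4]{EsnaultLevineWittenberg2015Index}) the index divides $\chi_k(Z,E)$. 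Setting $K_i := K(z_i)$ and $E := L \times K_1 \times \cdots \times K_r$, Lemma~\ref{lemma norms Ktheory and motivic} identifies the quotient $\mathrm{K}_{d+1}(K) / \langle \mathrm{N}_{d+1}(L/K), \mathrm{N}_{d+1}(K_i/K) \rangle$ with $\mathrm{H}^{d+2}(K, \check{T} \otimes \mathbf{Z}(d+1))$ for the appropriate module $\check{T}$. Since each $K_i$ arises from an $L$-point after base change, the norm group $\mathrm{N}_{d+1}(Z/K)$ contains each $\mathrm{N}_{d+1}(K_i/K)$, so it suffices to bound the torsion of this cohomology group.

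Next I would invoke Poitou–Tate duality as stated in the preliminaries: the pairing
\[
\overline{\Sha^2(K,\hat{T})} \times \Sha^{d+2}(K, T) \to \mathbf{Q}/\mathbf{Z}
\]
is perfect, so it is equivalent to bound the torsion of $\overline{\Sha^2(K, \hat{T})}$, where $\hat{T}$ is the dual module fitting in $0 \to \mathbf{Z} \to \mathbf{Z}[E/K] \to \hat{T} \to 0$. This is precisely the setup of Proposition~\ref{prop horrible}. The strategy is therefore to verify its hypotheses (LD1), (LD2), (H1)–(H4) in order to conclude that $\overline{\Sha^2(K, \hat{T})}$ is $((m \vee m') \wedge n)$-torsion for suitable $m, m'$. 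The linear disjointness conditions (LD1) and (LD2) are arranged by applying Proposition~\ref{prop globalising local extensions} to the Hilbertian field $K$: this is where we actually construct the field extensions $K_i$ globally, prescribing local behaviour at the finitely many bad places while forcing disjointness from the Galois closure of $L/K$. The hypotheses (H1)–(H4) are statements about surjectivity and torsion of restriction maps between $\Sha^2(-, \hat{T})$ groups, which via Shapiro's lemma reduce to restriction maps on $\Sha^2(-, \mathbf{Z})$ for various étale algebras; these are controlled by Lemma~\ref{lemma torsion of kernel and surj shas} together with the divisibility of $\Sha^2(K, \mathbf{Z}[E/K])$ noted in the preliminaries.

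The crucial input that feeds the torsion bound is the local solution. By Proposition~\ref{prop local solution 1}, at each completion $K_v \cong k_v(\!(t)\!)$ corresponding to a bad place the local quotient is $\chi_k(Z,E)$-torsion; and by Proposition~\ref{prop reduction to index 1} applied after a purely unramified base change, the residual index contributes no further obstruction. Combining the global torsion bound $((m \vee m') \wedge n)$ from Proposition~\ref{prop horrible} with these local computations, and keeping track of how the index (which divides $\chi_k(Z,E)$) enters through both $m$ and $m'$, yields that the total quotient is annihilated by $\chi_k(Z,E)^2$---the square appearing because both the ``vertical'' local norm contribution and the ``horizontal'' index contribution each carry one factor of the Euler characteristic.

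\textbf{The main obstacle} I anticipate is the verification of hypotheses (H2)–(H4) of Proposition~\ref{prop horrible}, namely controlling the kernels and surjectivity of the restriction maps $\mathrm{res}_{L K_I / K_I}$ and their variants. While Lemma~\ref{lemma torsion of kernel and surj shas} handles the basic purely unramified case, the composite extensions $K_I$, $K_{\hat i}$, and $LK_i$ require that the invariant $i_0(\cdot)$ and the associated torsion bounds behave well under these compositions. Making the bookkeeping precise---so that the two factors of $\chi_k(Z,E)$ emerge cleanly rather than as a larger power---is the delicate point, and it is exactly here that the linear disjointness guaranteed by Proposition~\ref{prop globalising local extensions} must be used to decouple the contributions of $L$ and the $K_i$.
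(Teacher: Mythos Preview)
Your strategy is the paper's: reinterpret the quotient via Lemma~\ref{lemma norms Ktheory and motivic}, construct global extensions $K_i$ through Proposition~\ref{prop globalising local extensions} with prescribed local behaviour, plug these into Proposition~\ref{prop horrible}, and dualise via Poitou--Tate. You also correctly locate the crux at the verification of (H2)--(H4). Two corrections of framing, however. First, the $K_i$ are \emph{not} chosen as residue fields of closed points realising the index of $Z$; rather, for a fixed $x\in\mathrm K_{d+1}(K)$ one takes $S=\{v:\partial_v(x)\neq 0\}$, uses Proposition~\ref{prop local solution 1} at each $v\in S$ to find local extensions $M_i^{(v)}/K_v$ with $Z(M_i^{(v)})\neq\emptyset$ such that $\chi(Z,E)\cdot x_v$ lies in the local norm group, and then globalises those. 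Second, one does not bound the torsion of $\mathrm H^{d+2}(K,T)$ (which is not torsion): the construction guarantees only that $\chi(Z,E)\cdot x$ lands in $\Sha^{d+2}(K,T)$, and it is this Tate--Shafarevich group whose torsion must then be bounded by $\chi(Z,E)$.

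The genuine gap is precisely at your stated obstacle, and resolving it requires ingredients you do not mention. To force $((m\vee m')\wedge n)$ to divide $m=\chi(Z,E)$ the paper (i) first reduces to $i_0(C)=1$ via Proposition~\ref{prop reduction to index 1} and to $Z$ geometrically integral by restriction--corestriction; (ii) for each prime $\ell$ with $v_\ell(n)>v_\ell(\chi(Z,E))$, uses ampleness of $k$ to pick an auxiliary place $w_\ell\in C^{(1)}\setminus S$ with $[k(w_\ell):k]_0$ prime to $\ell$, and then invokes Lemma~\ref{lemma inequality val of chi} to produce $M^{(w_\ell)}/K_{w_\ell}$ with $Z(M^{(w_\ell)})\neq\emptyset$ and $v_\ell$ of its residual degree bounded by $v_\ell(\chi(Z,E))$; (iii) requires each globalised $K_i^{(v)}$ also to embed into these $M^{(w_\ell)}$; and (iv) imposes that the $K_i^{(v)}$ be linearly disjoint not only from one another and from $L$, but from the compositum $L_n$ of all cyclic locally trivial extensions of $K$ of degree dividing $n$ (a finite extension since $\Sha^1(K,\mathbf Z/n)$ is finite). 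It is (ii)--(iv) together that control $m'$ modulo $n$ and collapse $((m\vee m')\wedge n)$ to $m\wedge n$, yielding the clean $\chi(Z,E)^2$; without them the bookkeeping you allude to cannot be closed.
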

    \begin{proof}
        Let $x \in \mathrm{K}_{d+1}(K)$. First observe that we can assume that $Z$ is geometrically integral by a restriction-corestriction argument. We may also assume that $i_0(C)=1$ thanks to Proposition \ref{prop reduction to index 1}. \par 
        Let $S$ be the finite set of places $v \in C^{(1)}$ such that $\partial_v(x) \neq 0$. Given a prime number $\ell$, amplness of the field $k$ ensure the existence of a point $w_{\ell} \in C^{(1)} \setminus S$ such that $[k(w_{\ell}):k]_0$ is prime to $\ell$, for a definition of this notion see \cite{Pop2014AmpleFields}. Moreover, by Proposition \ref{prop local solution 1} we know that
        \begin{equation}\label{eq locally chi torsion}
            \chi_{K}(Z,E) \cdot \mathrm{K}_{d+1}(K_{w_{\ell}}) \subseteq \left\langle \mathrm{N}_{d+1}(L_{w_{\ell}}/K_{w_{\ell}}), \mathrm{N}_{d+1}(Z_{w_{\ell}}/K_{w_{\ell}})\right\rangle
        \end{equation} 
        Just as in \cite[Lemma 3.14]{DiegoLuco2024KKp-adicFunction} we need the following lemma
        \begin{lemma}\label{lemma inequality val of chi}
            Keep the notation from Theorem \ref{thm chisquare tosion smooth} and let $n= [l:k]$. Denote by $v_{\ell}$ the $\ell$-adic valuation. If $v_{\ell}(n) \gneq v_{\ell}(\chi_K(Z,E))$, then there exists a finite field extension $M^{(w_{\ell})}$ of $K_{w_{\ell}}$ such that $Z(M^{(w_{\ell})}) \neq\emptyset$ and $v_{\ell}([m^{(w_{\ell})} : k(w_{\ell})]_0) \leq v_{\ell}(\chi_K(Z,E))$ where $m^{(w_{\ell})}$ is the first residue field of $M^{(w_{\ell})}$. 
        \end{lemma}

       \noindent Note that for every $v \in C^{(1)} \setminus S$ we have 
        \[x_v \in \mathrm{N}_{d+1}(L_v/K_v)\]
        thanks to Proposition \ref{lemma residue and norms}. For every $v \in S$, Proposition \ref{prop local solution 1} we can find finite extensions $M_1^{(v)}, \cdots , M_{r_v}^{(v)}$ of $K_v$ such that for every $i \in \{1, \cdots , r_v\}$ we have $Z(M_i^{(v)}) \neq \emptyset$ and 
        \[ \chi(Z,E) \cdot x \in \left\langle \mathrm{N}_{d+1}(L_v/K_v) ,  \mathrm{N}_{d+1}(M_i^{(v)}/K_v) \; | \; i \in \{1, \cdots , r_v\} \right\rangle.\]
        By repeatedly applying Proposition \ref{prop globalising local extensions} we can find for every $v \in S$ and $i \in \{1, \cdots , r_v\}$ a finite extension $K_i^{(v)}$ such that 
        \begin{enumerate}
            \item $Z(K_i^{(v)}) \neq \emptyset$,
            \item there is a $K$-embedding $K_i^{(v)} \hookrightarrow M_i^{(v)}$,
            \item for every prime $\ell$ such that $v_{\ell}(n) \gneq v_{\ell}(\chi(Z,E))$ there exists a $K$-embedding $K_i^{(v)} \hookrightarrow M^{(w_{\ell})}$ where $M^{(w_{\ell})}$ is the extension constructed in Lemma \ref{lemma inequality val of chi}, and
            \item for each pair $(v_0, i_0)$ the field $K_{i_0}^{(v_0)}$ is linearly disjoint from \[ L_n \cdot \prod_{(v,i) \neq (v_0,i_0)} K_i^{(v)}\] where $L_n$ is the composite of all cyclic locally trivial extensions whose degree is divides $n$. Note that $L_n$ is finite because $\Sha^1(K,\mathbf{Z}/n)$ is a finite group, see \cite[Proposition 2.6]{Diego2015LGPrincipleHigherLocal}.
        \end{enumerate}
        Fix $E=L \times \prod_{i,v} K^{(v)}_i$ and $K_I = \prod_{i,v} K^{(v)}_i$. Let $\hat{T}$ be the Galois module defined by the exact sequence
        \[ 0 \to \mathbf{Z} \to \mathbf{Z} [E/K] \to \hat{T} \to 0.\]
        In the same way as in the proof of \cite[Theorem 3.13]{DiegoLuco2024KKp-adicFunction} we can check the hypothesis of Proposition \ref{prop horrible} with $m = \chi(Z,E)$ and 
        \[ m' = \# \ker \left(\Sha^2(L,\mathbf{Z}) \to \Sha^2(LK_I, \mathbf{Z}) \right).\]
        Then by Proposition \ref{prop horrible} the group $\overline{\Sha^2(K,\hat{T})}$ is $((m\vee m') \wedge n)$-torsion. Since $K_I$ and $L_n$ are linearly disjoint, $m\wedge n =1$ and $(m\vee m') \wedge n = m \wedge n$. Set $\check{T} := \mathrm{Hom}_K \; (\hat{T},\mathbf{Z})$ and $T:= \check{T} \otimes \mathbf{Z}(d+1)$. By Poitou-Tate duality we deduce that $\Sha^{d+2}(K,T)$ is $m$-torsion. \par 
        We can interpret $x$ as an element of $\mathrm{H}^{d+2}(K,T)$ thanks to Lemma \ref{lemma norms Ktheory and motivic}. By construction of $T$ we know that $m x$ belongs to $\Sha^{d+2}(K,T)$ and since this group is $m$-torsion, $m^2 \cdot x =0 \in \Sha^{d+1}(K,T)$. Which again by Lemma \ref{lemma norms Ktheory and motivic} means that $m^2 \cdot x $ belongs to 
        \[\left\langle \mathrm{N}_{d+1}(L/K), \mathrm{N}_{d+1}(K_i^{(v)}/K) \; |\; v \in S ,\: i \in \{1, \cdots ,r_v \} \right\rangle\]
        which is a subset of 
        \[\left\langle  \mathrm{N}_{d+1}(L/K), \mathrm{N}_{d+1}(Z/K) \right\rangle\]
        by construction of $K_i^{(v)}$.
    \end{proof}
    \subsection*{Step 6: The general case}
        The general case of Theorem \ref{thm chi2 torsion} can be deduced from the smooth case using the same \textit{dévissage} methods used in \cite{DiegoLuco2024KKp-adicFunction}; namely the following \textit{dévissage} principle can be applied in our context, see \cite[Proposition 3.15]{DiegoLuco2024KKp-adicFunction}
        \begin{prop}
        Let $F$ be a field and $r$ a positive integer. Let $(\mathrm{P})$ be a property of $F$-varieties. Suppose that for every proper $F$-variety we are given an integer $m_X$. Assume that 
        \begin{enumerate}
            \item For every morphism of proper $F$-varieties $Y\to X$, the integer $m_X$ divides $m_Y$,
            \item for every proper $F$-variety satisfying $(\mathrm{P})$, the integer $m_X$ divides $\chi_F(X,\mathcal{O}_X)^r$,
            \item for every proper integral $F$-scheme $X$ there exists a proper $F$-scheme $Y$ satisfying $(\mathrm{P})$ and an $F$-morphism $Y \to X$ with generic fibre $Y_{\eta}$ such that $m_X$ and $\chi_{F(X)}(Y_{\eta},\mathcal{O}_{Y_{\eta}})^r$ are coprime.
        \end{enumerate}
        Then for every proper $F$-variety $X$ and coherent sheaf $E$ on $X$, the integer $m_X$ divides $\chi_K(X,E)^r$. 
        \end{prop}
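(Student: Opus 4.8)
The plan is to prove the statement by Noetherian induction on $\dim X$, after first isolating the only genuinely geometric input and recording an elementary arithmetic lemma. That lemma is: if $N\ge 1$ is an integer, $r\ge 1$, and $a_1,\dots,a_s\in\mathbf{Z}$ satisfy $N\mid a_i^{\,r}$ for every $i$, then $N\mid\big(\sum_i c_i a_i\big)^{r}$ for all $c_i\in\mathbf{Z}$; indeed, for each prime $\ell$ dividing $N$ one has $r\,v_\ell(a_i)\ge v_\ell(N)$, whence $r\,v_\ell\big(\sum_i c_i a_i\big)\ge r\min_i v_\ell(a_i)\ge v_\ell(N)$. This is the device by which the power $r$ carries divisibility through sums, and it will be used twice.

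First I would reduce to the case where $X$ is integral and $E=\mathcal{O}_X$. Writing $G_0(X)$ for the Grothendieck group of coherent sheaves, the class $[E]$ decomposes as $\sum_i n_i[\iota_{Z_i,*}\mathcal{O}_{Z_i}]$ with the $Z_i\subseteq X$ integral closed subschemes, so that $\chi_F(X,E)=\sum_i n_i\,\chi_F(Z_i,\mathcal{O}_{Z_i})$. By hypothesis $(1)$ applied to the closed immersions $\iota_{Z_i}$ one has $m_X\mid m_{Z_i}$, so it suffices to know $m_{Z_i}\mid\chi_F(Z_i,\mathcal{O}_{Z_i})^{r}$ for each $i$; the arithmetic lemma (with $N=m_X$) then yields $m_X\mid\chi_F(X,E)^{r}$. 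Thus everything comes down to proving, for an integral proper $F$-scheme $X$, the single divisibility $m_X\mid\chi_F(X,\mathcal{O}_X)^{r}$.

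For this integral case I would invoke hypothesis $(3)$ to produce $f\colon Y\to X$ with $Y$ satisfying $(\mathrm{P})$ and $c:=\chi_{F(X)}(Y_\eta,\mathcal{O}_{Y_\eta})$ prime to $m_X$. Since $f$ is proper, the Leray spectral sequence gives $\chi_F(Y,\mathcal{O}_Y)=\sum_q(-1)^q\chi_F(X,R^qf_*\mathcal{O}_Y)$, and flat base change along the generic point $\eta$ identifies the stalk $(R^qf_*\mathcal{O}_Y)_\eta$ with $\mathrm{H}^q(Y_\eta,\mathcal{O}_{Y_\eta})$, so the virtual sheaf $\sum_q(-1)^q[R^qf_*\mathcal{O}_Y]$ has generic rank $c$. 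Using the localisation sequence for $G_0$, this class equals $c\,[\mathcal{O}_X]+[T]$ with $T$ supported on a closed subscheme $W\subsetneq X$ of dimension $<\dim X$, whence $\chi_F(Y,\mathcal{O}_Y)=c\,\chi_F(X,\mathcal{O}_X)+\chi_F(X,T)$. Now hypotheses $(1)$ and $(2)$ give $m_X\mid m_Y\mid\chi_F(Y,\mathcal{O}_Y)^{r}$, while the inductive hypothesis applied to $W$ (and the arithmetic lemma, since $T$ is a virtual class), together with $m_X\mid m_W$, gives $m_X\mid\chi_F(X,T)^{r}$. Working prime by prime, for $\ell\mid m_X$ we have $v_\ell(c)=0$, hence $v_\ell(\chi_F(X,\mathcal{O}_X))\ge\min\big(v_\ell(\chi_F(Y,\mathcal{O}_Y)),v_\ell(\chi_F(X,T))\big)$ and therefore $r\,v_\ell(\chi_F(X,\mathcal{O}_X))\ge v_\ell(m_X)$; ranging over $\ell$ yields $m_X\mid\chi_F(X,\mathcal{O}_X)^{r}$, which closes the induction (the base $\dim X\le 0$ being the same computation with $W=\varnothing$ and $T=0$).

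The routine ingredients are the two dévissage inputs — generation of $G_0(X)$ by structure sheaves of integral closed subschemes and the localisation sequence — together with the invariance of $\chi_F$ under closed pushforward and the Leray spectral sequence. The main obstacle, and the only point where the geometry truly enters, is the identification of the generic rank of $\sum_q(-1)^q[R^qf_*\mathcal{O}_Y]$ with the fibrewise Euler characteristic $c=\chi_{F(X)}(Y_\eta,\mathcal{O}_{Y_\eta})$: one must verify that cohomology commutes with the flat localisation at $\eta$, so that the error term $T$ is genuinely supported in lower dimension. It is precisely this that allows the inductive hypothesis and the coprimality of $c$ with $m_X$ to be combined in the final valuation estimate.
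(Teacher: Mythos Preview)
Your proof is correct and follows the standard d\'evissage strategy that the paper itself does not reproduce but simply attributes to \cite[Proposition~3.15]{DiegoLuco2024KKp-adicFunction}, noting that it refines \cite[Proposition~2.1]{Wittenberg2015KKQp}. The key ingredients --- the decomposition in $G_0(X)$ by structure sheaves of integral closed subschemes, the Leray spectral sequence together with flat base change at the generic point to identify the generic rank with $\chi_{F(X)}(Y_\eta,\mathcal{O}_{Y_\eta})$, and the prime-by-prime valuation argument exploiting the coprimality in hypothesis~(3) --- are exactly those of Wittenberg's original argument, so your approach coincides with what the paper has in mind.

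One small organisational remark: when you say ``the inductive hypothesis applied to $W$ \ldots\ together with $m_X\mid m_W$'', you are implicitly decomposing the virtual class $T$ into classes $[\mathcal{O}_{W_j}]$ with $W_j\subseteq W$ integral and then invoking hypothesis~(1) for each closed immersion $W_j\hookrightarrow X$; it would be cleaner to phrase the induction directly on the dimension of \emph{integral} proper $F$-schemes (proving $m_Z\mid\chi_F(Z,\mathcal{O}_Z)^r$), so that the passage through the possibly non-integral $W$ is bypassed entirely. This is a cosmetic point and does not affect the validity of the argument.
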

        \noindent This proposition is a refinement of \cite[Proposition 2.1]{Wittenberg2015KKQp}.
    \subsection*{The \texorpdfstring{$C_{d+1}^{d+1}$}{Kato Kuzumaki} property}
    As an application of Theorem \ref{thm chi2 torsion} we get the following result.
    \begin{thm}\label{thm Cd1d1}
        Let $k$ be a $d$-local $k$ field such that $k_1$ is a $p$-adic field. Then the field $K= k(x)$ has the $C_{d+1}^{d+1}$ property.
    \end{thm}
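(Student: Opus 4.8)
The plan is to deduce Theorem \ref{thm Cd1d1} from Theorem \ref{thm chi2 torsion} by a standard Euler-characteristic argument applied to hypersurfaces of low degree. Let $Z \subseteq \mathbf{P}^n_K$ be a hypersurface of degree $m$ with $m^{d+1} \leq n$ (this is the relevant constraint for the $C_{d+1}^{d+1}$ property). Taking $l = k$ so that $L = K$, Theorem \ref{thm chi2 torsion} tells us that $\mathrm{K}_{d+1}(K)/\mathrm{N}_{d+1}(Z/K)$ is $\chi_K(Z,E)^2$-torsion for \emph{any} coherent sheaf $E$ on $Z$. The whole game is therefore to exhibit, for each prime $\ell$, a coherent sheaf $E$ on $Z$ whose Euler characteristic $\chi_K(Z,E)$ is prime to $\ell$; since $\mathrm{N}_{d+1}(Z/K)$ is a subgroup independent of $E$, combining these over all primes forces $\mathrm{K}_{d+1}(K)/\mathrm{N}_{d+1}(Z/K)$ to be trivial.

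The key input is the classical computation of Euler characteristics of line bundles on a degree-$m$ hypersurface in projective space, exactly as used by Kato--Kuzumaki and in \cite{Wittenberg2015KKQp}. For a hypersurface $Z$ of degree $m$ in $\mathbf{P}^n_K$, one has from the standard exact sequence $0 \to \mathcal{O}_{\mathbf{P}^n}(j-m) \to \mathcal{O}_{\mathbf{P}^n}(j) \to \mathcal{O}_Z(j) \to 0$ that $\chi(Z,\mathcal{O}_Z(j)) = \binom{n+j}{n} - \binom{n+j-m}{n}$. First I would recall that the numerical condition $m^{d+1} \leq n$ (more than what is strictly needed) guarantees that the greatest common divisor of the integers $\{\chi(Z,\mathcal{O}_Z(j))\}_{j}$ equals $1$; in fact the Kato--Kuzumaki estimate shows that under $m^i \leq n$ the gcd of these Euler characteristics is coprime to any prescribed set of small primes, and one extracts from this, for each prime $\ell$, a twist $\mathcal{O}_Z(j_\ell)$ with $\chi(Z,\mathcal{O}_Z(j_\ell))$ prime to $\ell$. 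Applying Theorem \ref{thm chi2 torsion} with $E = \mathcal{O}_Z(j_\ell)$ shows the quotient $\mathrm{K}_{d+1}(K)/\mathrm{N}_{d+1}(Z/K)$ has trivial $\ell$-primary part.

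Running this for every prime $\ell$ yields $\mathrm{K}_{d+1}(K) = \mathrm{N}_{d+1}(Z/K)$, which is precisely the $C_{d+1}^{d+1}$ property for $K = k(x)$; the extension to finite extensions of $K$ follows because any finite extension of $K$ is again the function field of a curve over a finite (hence still higher-local with $p$-adic first residue) extension of $k$, so Theorem \ref{thm chi2 torsion} applies verbatim there. I expect the main obstacle to be the purely arithmetic lemma controlling the gcd of the binomial Euler characteristics: one must verify that the condition $m^{d+1} \leq n$ suffices to make these integers jointly coprime to each fixed prime $\ell$, and this is the place where the degree bound enters in an essential way. This is a known combinatorial fact about binomial coefficients (it is exactly the mechanism by which Tsen--Lang-type $C_i$ bounds propagate), so the argument is routine once that lemma is cited or reproved, but it is where all the numerical force of the hypothesis is concentrated.
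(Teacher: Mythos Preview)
Your application of Theorem \ref{thm chi2 torsion} with $l=k$ is vacuous: if $L=K$ then $\mathrm{N}_{d+1}(L/K)=\mathrm{N}_{d+1}(K/K)=\mathrm{K}_{d+1}(K)$, so the quotient $\mathrm{K}_{d+1}(K)/\langle \mathrm{N}_{d+1}(L/K),\mathrm{N}_{d+1}(Z/K)\rangle$ is zero and the torsion statement carries no information about $\mathrm{K}_{d+1}(K)/\mathrm{N}_{d+1}(Z/K)$. The whole force of Theorem \ref{thm chi2 torsion} comes from choosing a \emph{non-trivial} purely unramified $l/k$; you cannot simply set $l=k$ and drop the $L$-term.

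The missing idea, which is what the paper does, is to choose $l'/k$ so that $\mathrm{N}_{d+1}(L'/K)$ is already contained in $\mathrm{N}_{d+1}(Z/K)$. Since the maximal purely unramified extension $k^{pnr}$ is a $C_d$ field (iterated Lang), the field $k^{pnr}K$ is $C_{d+1}$, and the hypothesis $m^{d+1}\leq n$ gives $Z(k^{pnr}K)\neq\emptyset$; hence there is a finite purely unramified $l'/k$ with $Z(l'K)\neq\emptyset$, and then $\mathrm{N}_{d+1}(l'K/K)\subseteq \mathrm{N}_{d+1}(Z/K)$. With this choice of $l'$, Theorem \ref{thm chi2 torsion} genuinely yields that $\mathrm{K}_{d+1}(K)/\mathrm{N}_{d+1}(Z/K)$ is $\chi_K(Z,E)^2$-torsion. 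Incidentally, your binomial gcd manoeuvre is then unnecessary: since $m\leq m^{d+1}\leq n$, the exact sequence $0\to\mathcal{O}_{\mathbf{P}^n}(-m)\to\mathcal{O}_{\mathbf{P}^n}\to\iota_*\mathcal{O}_Z\to 0$ already gives $\chi_K(Z,\mathcal{O}_Z)=1$, so a single sheaf suffices. Your treatment of finite extensions of $K$ at the end is correct.
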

    \begin{proof}
        Let $L$ be a finite field extension of $K$ and $Z$ a hypersurface of $\mathbf{P}^n_{L}$ of degree $m$ such that $m^{d+1} \leq n$. Note that $L$ is the function field of a smooth projective geometrically integral curve $C$ over a $d$-local field $l$. Since $L^{pnr} = l^{pnr}L$ satisfies the $C_{d+1}$ property, we know that there exists a finite purely unramified extension $l'/l$ such that $Z(l') \neq \emptyset$. Set $L' := l' K$. Now we can apply Theorem \ref{thm chi2 torsion} to $Z$ and deduce that 
        \[ \mathrm{K}_{d+1}(L)  \big/  \left\langle \mathrm{N}_{d+1}(L'/L) , \mathrm{N}_{d+1}(Z/K) \right\rangle = \mathrm{K}_{d+1}(L)  \big/   \mathrm{N}_{d+1}(Z/K) \]
        is $\chi(Z, \mathcal{O}_Z)^2$-torsion. But since $m\leq n$ we can apply \cite[Theorem III 5.1]{Hartshorne1983AlgebraicGeometry} and the exact sequence
        \[0 \to \mathcal{O}_{\mathbf{P}^n_K}(-d) \to \mathcal{O}_{\mathbf{P}^n_K} \to \iota_* \mathcal{O}_Z \to 0\]
        where $\iota: Z \to \mathbf{P}^n_K$ stands for the closed immersion, to deduce that $\chi_K(Z,\mathcal{O}_Z) = 1$.
    \end{proof} 
\section{About the \texorpdfstring{$C_{j+1}^{d+1}$}{Kato Kuzumaki} property}
The setting is the same as the previous section: $k$ is a $d$-local field such that $k^{(d-1)}$ is a $p$-adic field and $K$ is the function field of a smooth projective geometrically integral curve $C$ defined over $k$. In this section we generalise the methods from \cite[Section 4]{DiegoLuco2024KKp-adicFunction} to get results in the direction of the $C_{j+1}^{d+1}$ property for some $j$ smaller than $d$. \par 
For the rest of this section, fix an integer $j \in \{1 ,\cdots , d \}$. Let $l/k$ be a finite extension totally $j$-ramified of prime degree $\ell$. We define $\mathcal{E}^0_{l/k}$ as the set of finite $j$-ramified subextensions of $l^{pnr}/k$. Set
\[ \mathcal{E}_{l/k} := \{ K'/K \, | \, K' = k'K \text{ with } k' \in \mathcal{E}_{l/k} \}.\]
In the rest of the section we use extensively the following observation: for every pair of extensions $k'$ and $k''$ in $\mathcal{E}^0_{l/k}$ such that $k' \subseteq k''$ the extension $k'' / k'$ is purely unramified.
\begin{thm}\label{thm Elk generated Ktheory}
    Assume that $i_{\leq j}(C) = 1$. Let $\ell$ be a prime number and $l/k$ a totally $j$-ramified extension of degree $\ell$. Then we have 
    \[ \mathrm{K}_{d+1}(K) = \left\langle \mathrm{N}_{d+1}(K'/K) \, | \, K' \in \mathcal{E}_{l/k} \right\rangle.\]
\end{thm}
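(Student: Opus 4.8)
The plan is to run the same local--global strategy as in the proof of Theorem~\ref{thm chisquare tosion smooth}, with the family of purely unramified twists replaced by the family $\mathcal{E}_{l/k}$, and to proceed by induction on $d$ (the base case $d=1$, i.e. the $p$-adic situation, being \cite[Section~4]{DiegoLuco2024KKp-adicFunction}). First I would fix $x\in\mathrm{K}_{d+1}(K)$ and a finite subfamily $K_1,\dots,K_r\in\mathcal{E}_{l/k}$, form the \'etale algebra $E=\prod_i K_i$ together with the Galois module $\check T$ from $0\to\check T\to\mathbf{Z}[E/K]\to\mathbf{Z}\to 0$, and use Lemma~\ref{lemma norms Ktheory and motivic} to identify the obstruction $\mathrm{K}_{d+1}(K)/\langle\mathrm{N}_{d+1}(K_i/K)\rangle$ with $\mathrm{H}^{d+2}(K,\check T\otimes\mathbf{Z}(d+1))$. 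As in Step~5, the task becomes to show that, after enlarging the family suitably, the class of $x$ dies in the associated $\Sha^{d+2}(K,\check T\otimes\mathbf{Z}(d+1))$, which by Poitou--Tate duality is dual to $\overline{\Sha^2(K,\hat T)}$.

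The local input is where the $j$-ramified family enters. At each place $v$ of $C$ the completion is $K_v\cong k(v)(\!(t)\!)$ with $k(v)$ a $d$-local field whose $1$-local residue is $p$-adic. Applying Proposition~\ref{prop generate residue implies generate} at level $d+1$ reduces generation of $\mathrm{K}_{d+1}(K_v)$ by norms from $\mathcal{E}_{l/k}$ to generation of $\mathrm{K}_d(k(v))$ by the norms $\mathrm{N}_d\bigl(k'\,k(v)/k(v)\bigr)$ with $k'\in\mathcal{E}^0_{l/k}$. Since every $k'\in\mathcal{E}^0_{l/k}$ contains $l$ and is purely unramified over it, transitivity makes these norms factor through $l\,k(v)$, and higher local class field theory (with the divisibility of $\mathrm{U}^1$ and the $C_0^{d}$ property of the residue field, just as in Lemma~\ref{lemma residue and norms}) shows that they fill precisely the index-$\ell$ subgroup $\mathrm{N}_d(l\,k(v)/k(v))$ of $\mathrm{K}_d(k(v))$. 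Thus locally the family $\mathcal{E}_{l/k}$ does \emph{not} generate everything --- it generates the ``$l$-norms'' --- and the defect is governed by the totally $j$-ramified degree-$\ell$ extension $l/k$.

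To bridge this defect I would use the hypothesis $i_{\leq j}(C)=1$: by a Wittenberg-type estimate (cf. \cite[Lemma~4.6]{Wittenberg2015KKQp}) it propagates to the irreducible components of the special fibre of a regular model, so that after reduction the inductive hypothesis applies to the curves over the $(d-1)$-local field $k^{(1)}$ appearing in the residue, the extension $l^{(1)}/k^{(1)}$ remaining totally $j$-ramified when $j\leq d-1$. Globally, ampleness and Proposition~\ref{prop globalising local extensions} produce, out of the local solutions above, genuine finite extensions $K_i^{(v)}\in\mathcal{E}_{l/k}$ realizing the required local points while satisfying the linear-disjointness conditions (LD1),(LD2). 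I would then feed the package into Proposition~\ref{prop horrible}, verifying (H1)--(H4) by means of Lemma~\ref{lemma torsion of kernel and surj shas} and the explicit computation $\Sha^{d+2}(K,\mathbf{Z}/m(d+1))\simeq(\mathbf{Z}/m)^{r(C)}$ of Theorem~\ref{thm computation of Sha}, the reciprocity sequence of Proposition~\ref{prop higher recirpocity} singling out the global invariant that must vanish. The outcome is that $\overline{\Sha^2(K,\hat T)}$ is killed by an integer prime to $\ell$, while the presence of the totally $j$-ramified $l$ annihilates the $\ell$-part, forcing the obstruction to vanish and $x$ to be a sum of norms from $\mathcal{E}_{l/k}$.

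The main obstacle I expect is exactly this $\ell$-primary matching. Because local generation only reaches an index-$\ell$ subgroup, the whole argument hinges on showing that the residual $\mathbf{Z}/\ell$-ambiguities glue, across all places, to a class measured by the reciprocity law, and that this class is annihilated once $i_{\leq j}(C)=1$; quantitatively this is what permits the torsion bound coprime to $\ell$ in Proposition~\ref{prop horrible}. A secondary delicate point is the boundary case $j=d$, where $l^{(1)}=k^{(1)}$ and the residue step provides no ramification at the residual level, so that both the propagation of the index condition and the verification of (H2)--(H4) must be carried out directly rather than through the inductive hypothesis on curves over $k^{(1)}$.
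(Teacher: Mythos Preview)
Your proposal rests on a misreading of the family $\mathcal{E}^0_{l/k}$. You write that ``every $k'\in\mathcal{E}^0_{l/k}$ contains $l$ and is purely unramified over it'', but this is false: $\mathcal{E}^0_{l/k}$ is the set of finite $j$-ramified subextensions of $l\cdot k^{pnr}/k$, and most of these do \emph{not} contain $l$. For instance, when $j=d$ and $l=k(\sqrt[\ell]{t_d})$, every twist $k(\sqrt[\ell]{u t_d})$ with $u\in\mathcal{O}_k^\times$ lies in $\mathcal{E}^0_{l/k}$; these are distinct degree-$\ell$ extensions, and their norms together certainly exceed the single subgroup $\mathrm{N}_d(l\,k(v)/k(v))$. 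It is precisely this richness that makes Proposition~\ref{prop Elk generates locally} true: locally the family $\mathcal{E}_{l/k}$ generates \emph{all} of $\mathrm{K}_{d+1}(K_v)$, not an index-$\ell$ subgroup. Your entire ``$\ell$-primary matching'' problem, and the elaborate mechanism you set up to resolve it via Proposition~\ref{prop horrible}, is therefore addressing a defect that does not exist.

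Once local generation is established in full, the global step is far lighter than you suggest. One picks finitely many $K_1,\dots,K_r\in\mathcal{E}_{l/k}$ (with $K_1,K_2$ linearly disjoint) so that the class of $x$ lands in $\Sha^{d+2}(K,T)$, and then one needs only that $\Sha^2(K,\hat T)$ is \emph{divisible}, which by Poitou--Tate forces $\Sha^{d+2}(K,T)=0$. The hypothesis $i_{\leq j}(C)=1$ is used exactly here, and not via propagation to the special fibre: one chooses places $v\in S$ with $\gcd_v[k(v):k]_j=1$, observes that the decomposition groups $G_v\le\mathrm{Gal}(K_I/K)$ have coprime indices and hence generate, and concludes that $\mathrm{res}:\Sha^2(K,\hat T)\to\Sha^2(K_I,\hat T)$ is injective. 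From there the argument of \cite[Proposition~4.5]{DiegoLuco2024KKp-adicFunction}, with Lemma~\ref{lemma torsion of kernel and surj shas} replacing \cite[Lemma~3.12]{DiegoLuco2024KKp-adicFunction}, gives divisibility directly; Proposition~\ref{prop horrible} is not invoked. Your induction on $d$ for the \emph{global} statement is also unnecessary: the only induction on $d$ in the paper's proof occurs inside the purely local Proposition~\ref{prop Elk generates locally}.
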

\subsection*{Step 1: Solving the problem locally}
The following proposition is valid for any $d$-local field regardless the level where the characteristic gets mixed.
\begin{prop}\label{prop generators Ktheory dlocal}
    Let $t_1,\cdots, t_d$ be a system of parameters of $k$ and $n \in \mathbf{N}$. Then $\mathrm{K}_n(k)$ is generated by symbols of the form $\{t_1, \cdots t_m, \lambda_{m+1}, \cdots, \lambda_{d}\}$ where $m \in \{1,\cdots, m\}$, and for every $j \in \{m+1, \cdots, d\}$ the element $\lambda_j$ is in $\mathcal{O}_k^{\times}$.
\end{prop}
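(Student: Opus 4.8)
The plan is to prove the statement in two moves: first reduce an arbitrary symbol to one whose every entry is either one of the parameters $t_i$ or a unit of $\mathcal{O}_k$, and then use the graded-commutativity of Milnor symbols to put it into the asserted normal form. The only structural input needed is the following consequence of the definition of a system of parameters: the rank-$d$ valuation $v\colon k^{\times}\to\mathbf{Z}^d$ is surjective and the classes $v(t_1),\dots,v(t_d)$ form a $\mathbf{Z}$-basis of $\mathbf{Z}^d$. This is immediate by induction on $d$ from the recursive definition: $t_d$ is a uniformiser for $\mathcal{O}_k^{(1)}$, while $\overline{t_1},\dots,\overline{t_{d-1}}$ is a system of parameters of the $(d-1)$-local field $k^{(1)}$, to which the inductive hypothesis applies. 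Consequently every $a\in k^{\times}$ can be written as $a=t_1^{a_1}\cdots t_d^{a_d}\,\lambda$ with $a_i\in\mathbf{Z}$ and $\lambda\in\mathcal{O}_k^{\times}$: choose the $a_i$ so that $\sum_i a_i\,v(t_i)=v(a)$, and then $a\,t_1^{-a_1}\cdots t_d^{-a_d}$ has trivial valuation, hence is a unit of the valuation ring $\mathcal{O}_k$.

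First I would carry out the reduction. Since $\mathrm{K}_n(k)$ is generated by symbols $\{a_1,\dots,a_n\}$ and each $a_j$ factors as above, multilinearity of the symbol expands $\{a_1,\dots,a_n\}$ as a $\mathbf{Z}$-linear combination of symbols $\{c_1,\dots,c_n\}$ in which every entry $c_j$ lies in $\{t_1,\dots,t_d\}\cup\mathcal{O}_k^{\times}$. Thus it suffices to treat such symbols.

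Next I would normalise such a symbol $\{c_1,\dots,c_n\}$ whose entries are parameters or units. Using antisymmetry $\{a,b\}=-\{b,a\}$ I permute the entries so that all parameter entries precede all unit entries, at the cost of an overall sign. If some parameter $t_i$ occurs among the parameter entries with multiplicity at least two, I bring two of its occurrences into adjacent slots and apply the identity $\{t_i,t_i\}=\{t_i,-1\}$, which follows from $\{a,-a\}=0$; since $-1\in\mathcal{O}_k^{\times}$, this trades a repeated parameter for one parameter and one unit, strictly decreasing the number of parameter entries. Iterating, I may assume the parameter entries are pairwise distinct, and a final reordering arranges their indices increasingly. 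The result is $\pm\{t_{i_1},\dots,t_{i_m},\lambda_{m+1},\dots,\lambda_n\}$ with $1\le i_1<\dots<i_m\le d$ and $\lambda_{m+1},\dots,\lambda_n\in\mathcal{O}_k^{\times}$, the sign being irrelevant for the purpose of generation. This is precisely a symbol of the claimed shape, so these symbols generate $\mathrm{K}_n(k)$.

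I do not expect a serious obstacle: once the factorisation $a=t_1^{a_1}\cdots t_d^{a_d}\lambda$ is available, the remainder is formal manipulation inside $\mathrm{K}_n(k)$ using only multilinearity, antisymmetry, and the Steinberg relation through $\{t_i,t_i\}=\{t_i,-1\}$. The one point deserving care is the structural input of the first paragraph, namely that the chosen parameters diagonalise the rank-$d$ valuation so that $v(t_1),\dots,v(t_d)$ is a basis; this is where the definition of a system of parameters enters, and nothing about the residue characteristic is used, which is exactly why the statement holds for every $d$-local field regardless of the level at which the characteristic changes.
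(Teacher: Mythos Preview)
Your proof is correct and follows exactly the paper's approach. The paper's own proof is the single line that $\mathcal{O}_k^{\times}$ together with $t_1,\dots,t_d$ generate $k^{\times}$; you have merely spelled out the ensuing multilinearity and antisymmetry manipulation in detail.
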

\begin{proof}
    This is a direct computation given the fact that $\mathcal{O}_k^{\times}$ together with $t_1, \cdots ,t_n$ generates $k^{\times}$.
\end{proof}

\begin{lemma}\label{lemma not totally j}
    Let $\ell$ be a prime number and $l/k$ a finite Galois totally $j$-ramified extension of degree $\ell$. Let $m/k$ be a finite purely ramified and $j$-ramified extension such that $ml/m$ is purely unramified. Then there exists $k' \in \mathcal{E}_{l/k}^0$ contained in $m$.
\end{lemma}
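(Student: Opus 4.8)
The plan is to argue by separating two cases according to whether $l \subseteq m$, and in the non-trivial case to manufacture an explicit $j$-ramified subextension of $l^{pnr}/k$ inside $m$ by analysing an auxiliary $(\mathbf{Z}/\ell)^2$-extension. If $l \subseteq m$ we are immediately done: since $l/k$ is totally $j$-ramified and $l \subseteq l^{pnr}$, the field $l$ itself belongs to $\mathcal{E}^0_{l/k}$ and is contained in $m$, so one may take $k' = l$. Assume then $l \not\subseteq m$. As $l/k$ has prime degree $\ell$ this forces $l \cap m = k$, and since $l/k$ is Galois the compositum $N := ml$ satisfies $[N:m] = \ell$; by hypothesis $N/m$ is purely unramified.

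The first step is to extract from $N$ its maximal purely unramified subextension $u/k$. Using that $m/k$ is purely ramified ($[m:k]_0 = 1$) and that $N/m$ is purely unramified of degree $\ell$, a residue-degree computation gives $[u:k] = [N:k]_0 = [N:m]_0\,[m:k]_0 = \ell$, so $u/k$ is purely unramified (hence cyclic Galois, as $k^{(d)}$ is finite) of degree $\ell$, and $u \subseteq k^{pnr}$. Since one of $l/k$, $u/k$ is purely ramified and the other purely unramified, $l \cap u = k$, so $lu/k$ is Galois with group $\mathbf{Z}/\ell \times \mathbf{Z}/\ell$, and $lu \subseteq l\,k^{pnr} = l^{pnr}$. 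Moreover $lu/u$ is the unramified base change of the totally $j$-ramified extension $l/k$, so $e_i(lu/k) = e_i(l/k)$ for all $i$; in particular every subextension of $lu/k$ is unramified at all levels $i \neq j$, and the maximal purely unramified subextension of $lu/k$ has degree $[lu:k]_0 = \ell$, hence equals $u$. Thus among the $\ell+1$ degree-$\ell$ subfields of $lu/k$, the field $u$ is the only purely unramified one, and all the others are $j$-ramified.

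The heart of the argument is to intersect $lu$ with $m$. Because $u \cap m = k$ one checks $mu = N$, whence $m \cdot lu = N$ as well; since $lu/k$ is Galois, restriction gives an isomorphism $\mathrm{Gal}(N/m) \simeq \mathrm{Gal}(lu/(lu \cap m))$, a group of order $[N:m] = \ell$. Therefore $k' := lu \cap m$ is a degree-$\ell$ subextension of $lu/k$ contained in $m$. Finally $k' \neq u$, since $u \not\subseteq m$ (as $u/k$ is purely unramified of degree $\ell>1$ while $m/k$ is purely ramified), so by the previous paragraph $k'$ is $j$-ramified. As $k' \subseteq lu \subseteq l^{pnr}$ and $k' \subseteq m$, we conclude $k' \in \mathcal{E}^0_{l/k}$ with $k' \subseteq m$, as required.

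I expect the genuine obstacle to be exactly the non-trivial case: a priori $m$ and $l^{pnr}$ might meet only in $k$, in which case no $j$-ramified subextension of $l^{pnr}/k$ would sit inside $m$ and the statement would fail. The device that rules this out is the auxiliary unramified extension $u \subseteq ml$ provided by the hypothesis that $ml/m$ is purely unramified, combined with the remark that inside the $(\mathbf{Z}/\ell)^2$-extension $lu$ exactly one of the degree-$\ell$ subfields is unramified; the purely-ramified hypothesis on $m$ (which forces $u \not\subseteq m$) then guarantees that $lu \cap m$ is one of the $j$-ramified subfields rather than $u$.
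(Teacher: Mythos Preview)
Your proof is correct and follows essentially the same strategy as the paper: pass to an auxiliary Galois extension of $k$ with group $(\mathbf{Z}/\ell)^2$ lying inside $l^{pnr}$, intersect it with $m$, and check that the resulting degree-$\ell$ subfield is $j$-ramified. Your field $u$ is exactly the paper's $k_\ell$ (the purely unramified extension of $k$ of degree $\ell$), and your $lu$ is the paper's $l_\ell = lk_\ell$; the paper obtains $[k':k]=\ell$ via the inequality $[m:k][l_\ell:k] > [ml_\ell:k]$ instead of your Galois-restriction argument, and concludes $k'\in\mathcal{E}^0_{l/k}$ in one line from ``$m/k$ purely ramified and $j$-ramified,'' whereas you spell out the analysis of the $\ell+1$ intermediate fields of $lu/k$ --- but the content is the same.
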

\begin{proof}
    If $ml/m$ is trivial, $l$ is contained in $m$ and there is nothing to prove. Then we may and do assume that $ml/m$ has degree $\ell$. Let $k_{\ell}$ be the purely unramified extension of $k$ of degree $\ell$ and set $l_{\ell} := lk_{\ell} $. The extension $l_{\ell}/k$ is Galois with Galois group $(\mathbf{Z}/\ell )^2$. Note that $ml$ contains $l_{\ell}$ because $ml/m$ is purely unramified of degree $\ell$.  Note that we have the following inequalities
    \[[m:k] [l_{\ell}:k] = \ell^2 [m:k] \gneq \ell [m:k] = [ml:k] = [ml_{\ell}:k].\]
    Denote by $k'$ the intersection $m \cap l_{\ell}$. The previous inequality imply that $[k:k] \gneq 1$ and as a subextension of $l_{\ell}$ its degree can only be $\ell$ or $\ell^2$, but it cannot be $\ell^2$ because that would imply $l_{\ell} \subseteq m$. Since $m/k$ is purely ramified and $j$-ramified, we conclude that $k' \in \mathcal{E}_{l/k}^0$.
\end{proof}
\begin{prop}\label{prop Elk generates locally}
    Let $\ell$ be a prime number and $l/k$ be a finite extension of degree $\ell$ totally $j$-ramified for $j \in \{1, \cdots, d\}$. Then for every $v \in C^{(1)}$ we have 
    \begin{equation}\label{eq generation K locally}
        \mathrm{K}_{d+1}(K_v) =\left\langle \mathrm{N}_{d+1} (K' \otimes_K K_v /K_v) \; |\; K' \in \mathcal{E}_{l/k} \right\rangle.
    \end{equation}
\end{prop}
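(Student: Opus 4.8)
The plan is to exploit that $K_v$ is itself a $(d+1)$-local field whose first residue field is the $d$-local field $\kappa(v)$. As $\mathrm{char}\,\kappa(v)=0$ we may write $K_v \simeq \kappa(v)(\!(\varpi)\!)$ for a uniformiser $\varpi$, and for $K'=k'K$ with $k'\in\mathcal{E}^0_{l/k}$ one has $K'\otimes_K K_v\simeq\prod_i m_i(\!(\varpi)\!)$, where the $m_i$ are the factors of the étale $\kappa(v)$-algebra $k'\otimes_k\kappa(v)$; each extension $m_i(\!(\varpi)\!)/K_v$ is unramified, with residue extension $m_i/\kappa(v)$. Thus the whole problem is governed by the residue field $\kappa(v)$. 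Writing $N$ for the right-hand side of \eqref{eq generation K locally}, I would work along the two-step filtration $\mathrm{U}^1_{d+1}(K_v)\subseteq\mathrm{U}_{d+1}(K_v)\subseteq\mathrm{K}_{d+1}(K_v)$, whose successive quotients are identified by the residue map $\partial$ and the specialisation map $s$ with $\mathrm{K}_d(\kappa(v))$ and $\mathrm{K}_{d+1}(\kappa(v))$, the bottom piece $\mathrm{U}^1_{d+1}(K_v)$ being divisible since $\kappa(v)$ has characteristic zero.

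It then suffices to prove the three assertions (iii) $\mathrm{U}^1_{d+1}(K_v)\subseteq N$; (ii) $s\big(N\cap\mathrm{U}_{d+1}(K_v)\big)=\mathrm{K}_{d+1}(\kappa(v))$; and (i) $\partial(N)=\mathrm{K}_d(\kappa(v))$, after which a short diagram chase along the filtration yields $N=\mathrm{K}_{d+1}(K_v)$. Assertion (iii) is immediate: for $k'=l$ the factor $m_i(\!(\varpi)\!)/K_v$ is unramified, so restriction followed by norm is multiplication by $[m_i:\kappa(v)]$ on the divisible group $\mathrm{U}^1_{d+1}(K_v)$, hence surjective. Invoking the compatibility of $\partial$ and $s$ with norms recalled in Section \ref{sec preliminary} (with ramification index $e=1$), assertions (i) and (ii) reduce respectively to the purely local equalities
\[
\big\langle \mathrm{N}_d(m_i/\kappa(v)) \big\rangle = \mathrm{K}_d(\kappa(v)), \qquad \big\langle \mathrm{N}_{d+1}(m_i/\kappa(v)) \big\rangle = \mathrm{K}_{d+1}(\kappa(v)),
\]
where $m_i$ ranges over the factors of $k'\otimes_k\kappa(v)$ as $k'$ runs over $\mathcal{E}^0_{l/k}$.

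To establish these equalities I would argue on the explicit generators of Proposition \ref{prop generators Ktheory dlocal}: by the projection formula $\mathrm{N}(\{\alpha,\mathrm{res}\,\beta\})=\{\mathrm{N}\alpha,\beta\}$ it is enough to show that for each generating symbol one entry is a norm from a suitable factor $m_i$. A unit entry is handled by enlarging $k'$ by an unramified extension of $l$ inside $l^{pnr}$, producing a purely unramified factor $m_i/\kappa(v)$, for which every unit of $\kappa(v)$ is a norm. The delicate entry is the parameter $\pi$ in the $j$-th (ramified) direction; here I would use that $\mathcal{E}^0_{l/k}$ contains, beyond $l$ itself, the unramified twists of $l$ inside $l^{pnr}/k$, and invoke Lemma \ref{lemma not totally j} to locate, inside a totally $j$-ramified degree-$\ell$ extension of $\kappa(v)$ that trivialises $\pi$, an element $k'\in\mathcal{E}^0_{l/k}$ whose base change to $\kappa(v)$ has a factor $m_i$ realising $\pi$ as a norm.

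I expect the last point to be the main obstacle. A single totally $j$-ramified degree-$\ell$ extension has norm group of index $\ell$ in $\mathrm{K}_d(\kappa(v))$, so the equality for $\mathrm{K}_d(\kappa(v))$ cannot come from one extension: one must produce, through the family of twists in $\mathcal{E}^0_{l/k}$ together with Lemma \ref{lemma not totally j}, sufficiently many distinct totally $j$-ramified factors over $\kappa(v)$ for their norm groups to sum to the whole group. Organising this argument according to whether $\kappa(v)$ already absorbs the $j$-ramification of $l$ — in which case all factors are unramified over $\kappa(v)$ and one concludes directly — or not, is where the combinatorics of higher-local ramification must be handled carefully.
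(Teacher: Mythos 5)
Your reduction to the residue field is sound and is in substance the paper's own first move: writing $K_v=\kappa(v)(\!(\varpi)\!)$, noting that each $K'\otimes_K K_v$ is unramified over $K_v$ with residue algebra $k'\otimes_k\kappa(v)$, and descending along the filtration $\mathrm{U}^1_{d+1}\subseteq\mathrm{U}_{d+1}\subseteq\mathrm{K}_{d+1}$ is exactly what the paper packages as Proposition \ref{prop generate residue implies generate} via Lemma \ref{lemma residue and norms} (your assertion (ii) is in fact immediate from the $C_0^{d+1}$ property of $\kappa(v)$, which has cohomological dimension $d+1$, and (iii) from divisibility of $\mathrm{U}^1$). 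Everything therefore hinges on your assertion (i), the equality $\left\langle \mathrm{N}_d(m_i/\kappa(v))\right\rangle=\mathrm{K}_d(\kappa(v))$, and this is precisely the step you leave open: you correctly identify it as ``the main obstacle'' and describe what would have to be organised, but you do not produce the argument. Since this equality is the entire content of the proposition, the proof is not complete.

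Two concrete problems with the sketch you do give for (i). First, handling a unit entry via ``a purely unramified factor $m_i/\kappa(v)$'' does not work in general: every $k'\in\mathcal{E}^0_{l/k}$ is by definition $j$-ramified, so the factors of $k'\otimes_k\kappa(v)$ are purely unramified over $\kappa(v)$ only when $\kappa(v)$ already absorbs the $j$-ramification of $l$; in the remaining (hard) cases a unit is not visibly a norm from any single factor. The paper instead treats unit entries and the parameter entry together by a twisting trick in the totally $d$-ramified case: choosing parameters with $l=k(\sqrt[\ell]{t_d})$, observing that the twists $k(\sqrt[\ell]{ut_d})$ for units $u$ also lie in $\mathcal{E}^0_{l/k}$, and using multiplicativity of symbols to show that any generator from Proposition \ref{prop generators Ktheory dlocal} having either $t_d$ or a unit as an entry lies in the subgroup generated by the norm groups. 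Second, the paper does not run a symbol-by-symbol argument uniformly in $j$: it inducts on $d$, handling $j<d$ by passing to the first residue field $\kappa(v)^{(1)}$ and invoking the statement for $\mathcal{E}^0_{l^{(1)}/k^{(1)}}$ together with a second application of Proposition \ref{prop generate residue implies generate}, and reserving for $j=d$ a four-way case analysis on $l\kappa(v)/\kappa(v)$ (split, purely unramified, totally $i$-ramified with $i<d$, totally $d$-ramified), the middle two cases resting on Lemma \ref{lemma not totally j}. Without this induction and case analysis, or a worked-out substitute for it, assertion (i) --- and hence the proposition --- remains unproved.
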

\begin{proof}
    Thanks to Proposition \ref{prop generate residue implies generate} we only need to prove that $\mathrm{K}_{d}(k(v))$ is generated by the norms coming from $k' \otimes_k k(v)$ with $k' \in \mathcal{E}^0_{l/k}$. We prove this statement by induction on $d$. The case $d=1$ is treated in \cite[Proposition 4.4]{DiegoLuco2024KKp-adicFunction}.  \par
    Let us consider the case $j \neq d$. Then by the induction hypothesis, we see that 
    \[\mathrm{K}_{d-1}(k(v)^{(1)}) = \left\langle \mathrm{N}_{d-1} (m \otimes_{k^{(1)}} k(v)^{(1)} /k(v)^{(1)}) \; |\; m \in \mathcal{E}_{l^{(1)}/k^{(1)}}^0 \right\rangle. \]
    We conclude this case by applying Proposition \ref{prop generate residue implies generate} and noting that an extension $m/k_{d-1}$ is in $\mathcal{E}^0_{l^{(1)}/k^{(1)}}$ if and only if there exists an extension $k' \in \mathcal{E}^0_{l/k}$ whose first residual extension $k'^{(1)}/k^{(1)}$ is equal to $m$. \par
    Assume that $j=d$. We distinguish four cases 
    \begin{enumerate}
        \item $l$ is contained in $k(v)$,
        \item $lk(v)/k(v)$ is purely unramified of degree $\ell$,
        \item $lk(v)/k(v)$ is totally $i$-ramified of degree $\ell$ for $i\in \{1 , \cdots, d-1 \}$, or
        \item $lk(v)/k(v)$ is totally $d$-ramified of degree $\ell$.
    \end{enumerate}
    Case 1 is direct because $l \otimes k(v) \simeq k(v) \times \cdots \times k(v)$. Let us consider case 2. Note that $k(v)/k$ is $d$-ramified because otherwise $lk(v)/k(v)$ would necessarily be $d$-ramified. Let $k(v)_{pnr}/k$ be the maximal purely unramified subextension of $k(v)$. We can apply Lemma \ref{lemma not totally j} because the extension $k(v)/k(v)_{pnr}$ is a purely ramified $d$-ramified extension such that $lk(v)/k(v)_{pnr}$ is purely unramified. Then there exists a finite extension $k' \in \mathcal{E}^0_{lk(v)_{pnr}/k(v)_{pnr}} \subseteq \mathcal{E}^0_{l/k}$ contained in $k(v)$.\par 
    Let us consider case 3. As in case 2, the extension $k(v)/k$ must be $d$-ramified. Since the statement was already proved for $j\neq d$ we know that 
    \begin{equation} \label{eq Elk generates locally case 2}
        \mathrm{K}_d(k(v)) =\left\langle \mathrm{N}_{d} (k' /k(v)) \; |\; k' \in \mathcal{E}_{lk(v)/k(v)}^0 \right\rangle.
    \end{equation}
    Let $k'$ be an extension in $\mathcal{E}^0_{l k(v)/k(v)}$. Denote by $k'_{pnr}/k$ the maximal purely unramified subextension of $k'/k$. We can apply Lemma \ref{lemma not totally j} because $k'/k'_{pnr}$ is a purely ramified $d$-ramified extension such that $k'l/k'$ is purely unramified. Then we can find an extension $k'' \in \mathcal{E}^0_{lk'_{pnr}/k_{pnr}'} \subseteq \mathcal{E}_{l/k}$ contained in $k'$. Since $k''$ is contained in $k'$ we have $\mathrm{N}_d(k'/k(v)) \subseteq \mathrm{N}_d(k''k(v)/k(v))$ where $k''k(v)$ is the composite of $k''$ and $k(v)$ inside $k'$. In particular, $\mathrm{N}_d(k'/k(v)) \subseteq \mathrm{N}_d(k'' \otimes_{k(v)} k(v) /k(v))$. We conclude this case using \eqref{eq Elk generates locally case 2}. \par 
    Finally, let us consider case 4. Since $d\geq 2$ we can find a system of parameters $t_1, \cdots, t_d$ for $k(v)$ such that $t_d$ belongs to $k$ and $l = k(\sqrt[\ell]{t_d})$, see \cite[Proposition II 5.12]{Lang1994AlgebraicNumberTheory}. In particular, we have $lk(v) = k(v)(\sqrt[\ell]{t_d})$. Clearly $t_d$ belongs to $\mathrm{N}_{1}(lk(v)/ k(v))$. Note that for every $u \in \mathcal{O}^{\times}_{k(v)}$ the extension $k(\sqrt[\ell]{ut_d})/k$ belongs to $\mathcal{E}^0_{l/k}$. Then any symbol $\alpha$ in $\mathrm{K}_d(k(v))$ that has either $t_d$ or an element of $\mathcal{O}_{k(v)}^{\times}$ as an entry in 
    \[\left\langle \mathrm{N}_{d} (k' \otimes_k k(v) /k(v)) \; |\; k' \in \mathcal{E}_{l/k}^0 \right\rangle.\]
    We conclude case 4 applying Proposition \ref{prop generators Ktheory dlocal}.
\end{proof}
\subsection*{Step 2: Computation of a Tate-Shafarevich group}
In this step we compute a Tate-Shafarevich group that controls the defect of the \textit{multi-normic} principle for extensions in $\mathcal{E}_{l/k}$.
\begin{prop}
    Assume that $i_{\leq j}(C) = 1$. Let $l/k$ be a finite Galois extension totally $j$-ramified of prime degree $\ell$. Given $K_1, \cdots, K_r \in \mathcal{E}_{l/k}$ such that $K_1$ and $K_2$ are linearly disjoint over $K$, consider the Galois module $\hat{T}$ defined by the following exact sequence  
    \begin{equation}
        0 \to \mathbf{Z} \to \mathbf{Z}[E/K] \to \hat{T} \to 0,
    \end{equation}
    where $E := K_1 \times \cdots \times K_r$. Then $\Sha^2(K,\hat{T})$ is divisible. 
\end{prop}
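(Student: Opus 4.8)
The plan is to transport the statement through Poitou--Tate duality so that it becomes a vanishing, and then to prove that vanishing by a dévissage built on the computations of Section~\ref{sec coh compu}. Since $\hat T$ is torsion free, dualising the defining sequence $0\to\mathbf Z\to\mathbf Z[E/K]\to\hat T\to 0$ gives $0\to\check T\to\mathbf Z[E/K]\to\mathbf Z\to 0$, so $\check T$ is exactly the module occurring in Lemma~\ref{lemma norms Ktheory and motivic}. Setting $T:=\check T\otimes\mathbf Z(d+1)$, Poitou--Tate duality identifies the finite group $\overline{\Sha^2(K,\hat T)}$ (the quotient of $\Sha^2(K,\hat T)$ by its maximal divisible subgroup) with the dual of $\Sha^{d+2}(K,T)$. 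Hence $\Sha^2(K,\hat T)$ is divisible if and only if $\Sha^{d+2}(K,T)=0$, and this is the reformulation I would work with.

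Next I would compute $\Sha^{d+2}(K,T)$ from the dual sequence. Tensoring $0\to\check T\to\mathbf Z[E/K]\to\mathbf Z\to 0$ with $\mathbf Z(d+1)$ and passing to cohomology, Shapiro's lemma identifies $\mathrm H^{\bullet}(K,\mathbf Z[E/K]\otimes\mathbf Z(d+1))$ with $\bigoplus_i\mathrm H^{\bullet}(K_i,\mathbf Z(d+1))$, while the Beilinson--Lichtenbaum vanishing gives $\mathrm H^{d+2}(K_i,\mathbf Z(d+1))=0=\mathrm H^{d+2}(K,\mathbf Z(d+1))$. Applying the same vanishing at every completion $K_v$, the local and global long exact sequences collapse and exhibit $\Sha^{d+2}(K,T)$ as a subquotient of $\Sha^{d+1}(K,\mathbf Z(d+1))$, built from the connecting map $\mathrm K_{d+1}(K)\to\mathrm K_{d+1}(K)/\langle\mathrm N_{d+1}(K_i/K)\rangle$ of Lemma~\ref{lemma norms Ktheory and motivic}. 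I would then show that this subquotient is divisible: reducing modulo $N$ through the norm--residue isomorphism $\mathrm K_{d+1}(K)/N\simeq\mathrm H^{d+1}(K,\mathbf Z/N(d+1))$ turns the question into the finite-coefficient local--global vanishing $\Sha^{d+1}(K,\mathbf Z/N(d+1))=0$, which one controls with the injectivity of \cite[Theorem 3.3.6]{HHK2014LGGaloisCohomology} exactly as in Proposition~\ref{prop sha as kernel of residues}. Being simultaneously divisible and finite (by Poitou--Tate), $\Sha^{d+2}(K,T)$ must vanish.

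The two hypotheses enter in the bookkeeping of local conditions needed to make these Sha sequences genuinely exact rather than mere complexes, and I expect this to be the main obstacle: because each $\Sha$ is a kernel of a kernel, exactness has to be verified spot by spot. The assumption that $K_1$ and $K_2$ are linearly disjoint over $K$ produces places at which the factors of $E$ split independently, forcing the relevant restriction maps to be injective (the analogue of hypotheses (H1)--(H3) of Proposition~\ref{prop horrible}); and the assumption $i_{\leq j}(C)=1$ is used, through the argument of Lemma~\ref{lemma torsion of kernel and surj shas}, to make the auxiliary restriction maps on $\Sha^2(\,\cdot\,,\mathbf Z)$ surjective with \emph{trivial} kernel, so that no finite obstruction survives the dévissage. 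Once exactness is secured, the divisibility of $\Sha^2(K,\mathbf Z)$ and of $\Sha^2(K,\mathbf Z[E/K])$ recorded in Section~\ref{sec preliminary} propagates to $\Sha^2(K,\hat T)$, in agreement with the vanishing of $\Sha^{d+2}(K,T)$ obtained above.
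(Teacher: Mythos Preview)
Your reformulation via Poitou--Tate is correct: divisibility of $\Sha^2(K,\hat T)$ is equivalent to the vanishing of $\Sha^{d+2}(K,T)$. The gap is in the second paragraph. You assert that the long exact sequences ``exhibit $\Sha^{d+2}(K,T)$ as a subquotient of $\Sha^{d+1}(K,\mathbf Z(d+1))$'', but this is false. A class in $\Sha^{d+2}(K,T)$ is represented by some $x\in\mathrm K_{d+1}(K)$ which, at every place $v$, lies in the subgroup generated by norms from the $K_i\otimes_K K_v$; there is no reason such an $x$ should vanish locally, i.e.\ lie in $\Sha^{d+1}(K,\mathbf Z(d+1))$. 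Long exact sequences in cohomology do not restrict to long exact sequences of Tate--Shafarevich groups, and the failure of exactness here is exactly the multi-normic local--global principle you are trying to prove. You acknowledge this obstacle in your third paragraph, but the proposed fix is too vague to be a proof, and in any case cannot salvage the subquotient claim. Your appeal to $\Sha^{d+1}(K,\mathbf Z/N(d+1))=0$ via \cite[Theorem 3.3.6]{HHK2014LGGaloisCohomology} is also misplaced: that result concerns the top degree $d+2$, not $d+1$, and the vanishing you assert is neither proved in the paper nor true in general.

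The paper's argument is quite different and works directly on $\Sha^2(K,\hat T)$ without dualising. The hypothesis $i_{\leq j}(C)=1$ is used concretely to prove that the restriction
\[
f_0:\Sha^2(K,\hat T)\longrightarrow\Sha^2(K_1,\hat T)\oplus\Sha^2(K_2,\hat T)
\]
is injective. One picks a finite set $S\subset C^{(1)}$ with $\gcd\big([k(v):k]_j\mid v\in S\big)=1$; since $k_I/k$ is unramified at levels $j+1,\dots,d$, the decomposition subgroup $G_v\le\mathrm{Gal}(K_I/K)$ has index dividing $[k(v):k]_j$, so the $G_v$ for $v\in S$ have pairwise coprime indices and therefore generate the Galois group. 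This forces $\mathrm H^2(K_I/K,\hat T)\hookrightarrow\prod_{v\in S}\mathrm H^2(K_{I,v}/K_v,\hat T)$, and inflation--restriction then gives injectivity of $\Sha^2(K,\hat T)\to\Sha^2(K_I,\hat T)$, which factors through $f_0$. With $f_0$ injective, the rest is a verbatim transcription of \cite[Proposition 4.5]{DiegoLuco2024KKp-adicFunction}, replacing each use of \cite[Lemma 3.12]{DiegoLuco2024KKp-adicFunction} by Lemma~\ref{lemma torsion of kernel and surj shas} and using that $i_0(C)$ divides $i_{\leq j}(C)=1$.
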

\begin{proof}
    We have a complex 
    \[ \Sha^2(K,\hat{T}) \xrightarrow{f_0} \Sha^2(K_1,\hat{T}) \oplus \Sha^2(K_2,\hat{T}) \xrightarrow{g_0} \Sha^2(K_1K_2,\hat{T})\]
    where $f_0$ and $g_0$ are defined as follows
    \begin{align*}
        f_0(\alpha) &= (\mathrm{res}_{K_1/K}(\alpha),\mathrm{res}_{K_2/K}(\alpha)) \\
        g_0(\alpha,\beta) &= \mathrm{res}_{K_1K_2/K_1}(\alpha)-\mathrm{res}_{K_1K_2/K_2}(\beta).
    \end{align*}
    \begin{lemma}
        $f_0$ is injective.
    \end{lemma}
    \begin{proof}
        We fix an algebraic closure $\overline{K}$ of $K$ and all fields are considered inside $\overline{K}$. Let $K_I$ be the Galois closure of the composite $K_1 \cdots K_r$ and $k_I/k$ the corresponding extension. Let $S \subseteq C^{(1)}$ be a finite family of points such that 
        \[ \mathrm{gcd} \left([k(v):k]_j \, | \, v \in S \right) =1. \] 
        For every $v \in S$ denote by $G_v \leq \mathrm{Gal}(K_I/K) = \mathrm{Gal}(k_I/k)$ the decomposition group of $v$ in $K_I/K$. We have the following identification of subgroups of $\mathrm{Gal}(k_I/ k)$
        \[G_v = \mathrm{Gal}(k_I/ k_I \cap k(v))\]
        Noting that for every $i \in \{j+1, \cdots, d\}$ the extension $k_I/k$ is not $i$-ramified, we deduce that the index of $G_v$ in $\mathrm{Gal}(k_I/k)$ divides $[k(v):k]_j$. A small exercise in group theory shows that a family of subgroups with coprime index must generate the group. For each $v \in S$ we choose a place $w$ of $K_I$ above $v$ and denote by $K_{I,v}$ the completion $K_{I,w}$. Thus, the restriction map 
        \[\mathrm{H}^2(K_I/K, \hat{T}) \to \prod_{v \in S} \mathrm{H}^2(K_{I,v}/K_v, \hat{T})\]
        is injective. By inflation-restriction we have the following commutative diagram with exact row
        \begin{equation*}
        \begin{tikzcd}
            0 \ar[r] & \mathrm{H}^2(K_I/K, \hat{T}) \ar[r] \ar[d] & \mathrm{H}^2(K, \hat{T}) \ar[r] \ar[d] & \mathrm{H}^2(K_I, \hat{T}) \ar[d] \\
            0 \ar[r] & \prod_{v \in S} \mathrm{H}^2(K_{I,v}/K_v, \hat{T}) \ar[r] & \prod_{v \in S} \mathrm{H}^2(K_v, \hat{T}) \ar[r]& \prod_{v \in S} \mathrm{H}^2(K_{I,v}, \hat{T}).
        \end{tikzcd}
    \end{equation*}
    We deduce that the morphism 
    \[ \ker \left(\mathrm{H}^2(K, \hat{T}) \to  \prod_{v \in S} \mathrm{H}^2(K_v, \hat{T})\right) \to \ker \left(\mathrm{H}^2(K_I, \hat{T}) \to  \prod_{v \in S} \mathrm{H}^2(K_{I,v}, \hat{T})\right) \]
    is injective. In particular, the restriction $\Sha^2(K,\hat{T}) \to \Sha^2(K_I,\hat{T})$ is also injective. We conclude because $\Sha^2(K,\hat{T}) \to \Sha^2(K_I,\hat{T})$ factors through $\Sha^2(K,\hat{T}) \to \Sha^2(K_1,\hat{T})$ and $\Sha^2(K,\hat{T}) \to \Sha^2(K_2,\hat{T})$. 
    \end{proof}
    Now we can follow the proof of \cite[Proposition 4.5]{DiegoLuco2024KKp-adicFunction} replacing every instance of \cite[Lemma 3.12]{DiegoLuco2024KKp-adicFunction} by Lemma \ref{lemma torsion of kernel and surj shas} and noting that $i_0(C) = 1$ since $i_0(C)$ divides $i_{\leq j}(C)$.
    \end{proof}
    \subsection*{Step 3: Conclusion of the argument}
    \begin{proof}[Proof of Theorem \ref{thm Elk generated Ktheory}]
        Consider $x \in \mathrm{K}_{d+1}(K)$. For every $v \in C^{(1)}$ we have 
    \[\mathrm{K}_{d+1}(K_v) =\left\langle \mathrm{N}_{d+1} (K' \otimes_K K_v /K_v) \; |\; K' \in \mathcal{E}_{l/k} \right\rangle.\]
    Hence, we can find $K_1, \cdots, K_r \in \mathcal{E}_{l/k}$ such that the class of $x$ in 
    \[\mathrm{K}_{d+1}(K) \, \big / \, \left\langle  \mathrm{N}_{d+1} (K_i /K) \, |  \, i \in \{1, \cdots, r \} \right\rangle\]
     belongs to the kernel of the restriction map towards 
    \[ \prod_{v \in C^{(1)}} \mathrm{K}_{d+1}(K_v) \, \big / \, \left\langle  \mathrm{N}_{d+1} (K_i \otimes_K K_v /K_v) \, | \, i \in \{1, \cdots, r \} \right\rangle.\]
    Moreover, up to enlarging the family $K_i$, we may assume that $K_1$ and $K_2$ are linearly disjoint. Consider the étale algebra $E:= K_1 \times \cdots \times K_r$ and the Galois module $\hat{T}$ defined by the following exact sequence
    \[0 \to \mathbf{Z} \to \mathbf{Z}[E/K] \to \hat{T} \to 0. \]
    Set $\check{T} := \mathrm{Hom}_K \;(\hat{T}, \mathbf{Z})$ and $T:= \check{T} \otimes \mathbf{Z}(d+1)$. Then under the identifications in Lemma \ref{lemma norms Ktheory and motivic} the class of $x$ belongs to $\Sha^{d+2}(K,T)$. On the other hand, the group $\Sha^2(K,\hat{T})$ is divisible and Poitou-Tate duality \cite[Theorem 0.1]{Diego2015LGPrincipleHigherLocal} implies that $\Sha^{d+2}(K,T)$ is trivial. Then $x$ belongs to $\left\langle  \mathrm{N}_{d+1} (K_i /K) \, |  \, i \in \{1, \cdots, r \} \right\rangle$, concluding the proof of Theorem \ref{thm Elk generated Ktheory}.
    \end{proof}

    \subsection*{Application to the \texorpdfstring{$C_{j+1}^{d+1}$}{Kato Kuzumaki} property}
    Theorem \ref{thm Elk generated Ktheory} allows us to prove the following result.
    \begin{thm}\label{thm iramchi torsion}
        Let $K$ be the function field of a smooth projective curve $C$ defined over a $d$-local field $k$ such that the $1$-local residue field of $k$ is $p$-adic. Let $l/k$ be a finite Galois extension, a number $j \in \{1,\cdots , d\}$ and set $L:=lK$. Assume that $l/k$ is not $i$-ramified for every $i\in \{j+1, \cdots , d\}$. Let $Z$ be a proper $K$-variety. Set $s_{l/k}$ to be the number of (not necessarily distinct) prime factors of the total ramification index of $l/k$. Then the quotient
        \[ \mathrm{K}_{d+1}(K) \, \big/ \, \left\langle \mathrm{N}_{d+1}(L/K) , \mathrm{N}_{d+1}(Z/K)\right\rangle\]
        is $i_{\leq j}^{ram}(C) \cdot \chi_K(Z,E)^{2 s_{l/k} +4}$-torsion for every coherent sheaf $E$ over $Z$.
    \end{thm}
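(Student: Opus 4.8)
The plan is to deduce this from the norm-generation statement of Theorem~\ref{thm Elk generated Ktheory} together with the purely unramified case (Theorem~\ref{thm chi2 torsion}), organised as a restriction--corestriction induction on $s_{l/k}$. Two preliminary normalisations would come first. Fixing a prime $\ell$, it suffices to bound the $\ell$-primary part of the quotient; restricting and corestricting along the fixed field of an $\ell$-Sylow subgroup of $\mathrm{Gal}(l/k)$ (the composite being multiplication by an integer prime to $\ell$) then reduces to the case where $l/k$ is an $\ell$-extension, so that $\mathrm{Gal}(l/k)$ is an $\ell$-group and $s_{l/k}=v_\ell(e_{\leq j}(l/k))$. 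Next, Proposition~\ref{prop reduction to index 1} trivialises $i_0(C)$ at no cost, after which a single ramified base change---choosing auxiliary points realising the relevant gcd, with residual degrees made prime to $\ell$ by the ampleness argument of Theorem~\ref{thm chisquare tosion smooth}---reduces to the situation $C(k)\neq\emptyset$ at the price of the factor $i_{\leq j}^{ram}(C)$.

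Granting $C(k)\neq\emptyset$ (so that $i_{\leq j'}(C_{k'})=1$ for every finite $k'/k$ and every $j'$), I would then show by induction on $s=s_{l/k}$ that $\chi_K(Z,E)^{2s+4}$ annihilates the quotient. The case $s=0$ is precisely Theorem~\ref{thm chi2 torsion}, which even gives the stronger exponent $2$. For the inductive step, let $j_0\leq j$ be the highest level at which $l/k$ ramifies. Its $j_0$-th ramification subgroup is a nontrivial normal $\ell$-subgroup of $\mathrm{Gal}(l/k)$, hence meets the centre nontrivially, so one can select a central subgroup $H$ of order $\ell$ inside it with $l/l^{H}$ totally $j_0$-ramified. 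Writing $k':=l^{H}$ and $K':=k'K$, the extension $k'/k$ is Galois, is not $i$-ramified for $i>j$, and satisfies $s_{k'/k}=s-1$; the induction hypothesis applied to $k'/k$ gives
\[\chi_K(Z,E)^{2s+2}\,\mathrm{K}_{d+1}(K)\subseteq\left\langle \mathrm{N}_{d+1}(K'/K),\,\mathrm{N}_{d+1}(Z/K)\right\rangle.\]

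It then remains to trade the norms from $K'$ for norms from $L=lK$. Since $i_{\leq j_0}(C_{k'})=1$, Theorem~\ref{thm Elk generated Ktheory} applied to the totally $j_0$-ramified prime-degree extension $l/k'$ writes $\mathrm{K}_{d+1}(K')$ as the span of the $\mathrm{N}_{d+1}(K''/K')$ for $K''\in\mathcal{E}_{l/k'}$. For each such $K''$ the extension $K''L/K''$ is purely unramified (because $k''l/k''$ is, as observed after the definition of $\mathcal{E}_{l/k}$), so Theorem~\ref{thm chi2 torsion} over $K''$ combined with the transitivity relations $\mathrm{N}_{K''/K'}\circ\mathrm{N}_{K''L/K''}=\mathrm{N}_{L/K'}\circ\mathrm{N}_{K''L/L}$ and $\mathrm{N}_{K'/K}\circ\mathrm{N}_{L/K'}=\mathrm{N}_{L/K}$, together with $\chi_{K''}(Z_{K''},E)=\chi_K(Z,E)$, yields after two corestrictions
\[\chi_K(Z,E)^{2}\,\mathrm{N}_{d+1}(K'/K)\subseteq\left\langle \mathrm{N}_{d+1}(L/K),\,\mathrm{N}_{d+1}(Z/K)\right\rangle.\]
Multiplying the inclusion from the previous paragraph by $\chi_K(Z,E)^{2}$ and substituting this containment closes the induction with exponent $2s+4$; undoing the two normalisations reinstates the factor $i_{\leq j}^{ram}(C)$.

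I expect the difficulty to be concentrated in the two normalisation steps rather than in the inductive core. The group-theoretic extraction of the central order-$\ell$ subgroup $H$ peeling off exactly one factor of the $j_0$-ramification should follow from the normality of the higher ramification groups in the upper numbering together with the nilpotence of $\ell$-groups, but must be stated carefully for higher local fields. The genuinely delicate point is the index normalisation: obtaining precisely $i_{\leq j}^{ram}(C)$, rather than the full corestriction degree $[k':k]$, forces one to choose the auxiliary points so that their residual degree is prime to $\ell$ and their ramification above level $j$ does not intervene, and reconciling $e_{\leq j}$ with the total local degree is where the argument will need the most care.
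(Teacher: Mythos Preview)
Your inductive core is correct and is organised dually to the paper's. The paper first strips off the purely unramified part via Theorem~\ref{thm chi2 torsion} (cost $\chi^2$), then applies Theorem~\ref{thm Elk generated Ktheory} over $l_{pnr}$ to a prime-degree totally $j'$-ramified piece $m/l_{pnr}$ extracted at the \emph{bottom}, and runs the induction over each $k'\in\mathcal{E}^0_{m/l_{pnr}}$ for the extension $lk'/k'$ (automatically Galois since $l/k$ is). You instead peel from the \emph{top}: extract $k'=l^{H}$ with $l/k'$ totally $j_0$-ramified of prime degree, apply the induction hypothesis to $k'/k$, and close with Theorem~\ref{thm Elk generated Ktheory} followed by Theorem~\ref{thm chi2 torsion}. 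The Sylow step is the price of this organisation---you need $H$ normal, hence central in an $\ell$-group---whereas the paper gets Galois-ness of $lk'/k'$ for free and never passes to a Sylow. One small correction: the claim that $k''l/k''$ is purely unramified is right, but it is not literally ``as observed after the definition of $\mathcal{E}_{l/k}$''; you first need $k''l\in\mathcal{E}^0_{l/k'}$ and then apply that observation to the inclusion $k''\subseteq k''l$.

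Your suspicion about the normalisation is well founded, and this is where the sketch does not yet close. Reducing all the way to $C(k)\neq\emptyset$ by base-changing to some $k_r$ with $C(k_r)\neq\emptyset$ costs $[k_r:k]$ under restriction--corestriction, not $e_{\leq j}(k_r/k)$; the gcd over $r$ then produces the full index of $C$, not $i_{\leq j}^{ram}(C)$. Neither Proposition~\ref{prop reduction to index 1} (which asserts surjectivity of certain purely unramified norms, not a trivialisation of $i_0$) nor the ampleness argument from the proof of Theorem~\ref{thm chisquare tosion smooth} (which controls $[k(w):k]_0$ for closed points of $C$, not $e_{\leq j}$) bridges this. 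The paper's remedy is to stop short of a rational point: it passes only to $m_r$, the maximal subextension of $k_r/k$ unramified at levels $>j$, reaching it from $K_{r,pnr}$ at cost exactly $[m_r:k_{r,pnr}]=e_{\leq j}(k_r/k)$ after a preliminary $\chi^2$ coming from Theorem~\ref{thm chi2 torsion}. Over $m_r$ one has only $i_{\leq j}(C_{m_r})=1$, not $C(m_r)\neq\emptyset$, and that is precisely what the paper's form of the induction requires. To make your route give the stated bound you must either adopt this normalisation and run your induction under the weaker hypothesis $i_{\leq j}(C)=1$---then checking separately that $i_{\leq j_0}(C_{k'})=1$ for $k'=l^{H}$, which your rational-point assumption was buying you for free---or accept a larger constant than $i_{\leq j}^{ram}(C)\cdot\chi^{2s_{l/k}+4}$.
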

    \begin{proof}
        We first assume that $i_{\leq j}(C) = 1$, and prove that the quotient in the statement is $\chi_K(Z,E)^{2s_{l/k} +2 }$-torsion for every coherent sheaf $E$ over $Z$. We proceed by induction on $s_{l/k}$. The case $s_{l/k} = 0$ is the content of Theorem \ref{thm chi2 torsion}. Assume that $s_{l/k}$ is greater than zero. Denote by $l_{pnr}$ the maximal subextension of $l/k$ such that $l_{pnr}/k$ is purely unramified, and set $L_{pnr} = l_{pnr}K$. By Theorem \ref{thm chi2 torsion} the quotient
        \[ \mathrm{K}_{d+1}(K) \, \big/ \, \left\langle \mathrm{N}_{d+1}(L_{pnr}/K) , \mathrm{N}_{d+1}(Z/K)\right\rangle\] 
        is $\chi_K(E,Z)^2$-torsion. Let $j' \in\{1, \cdots, j \}$ be the smallest index such that $l/l_{pnr}$ is $j'$-ramified. Denote by $l_{\leq j'}/l_{pnr}$ the maximal subextension of $l/l_{pnr}$ that not $i$-ramified for every $i \in \{j'+1,\cdots, d\}$. Since $l_{\leq j'}/l_{pnr}$ is a solvable extension, we can extract a subextension $m/l_{pnr}$ totally $j'$-ramified of prime degree $\ell$. Set $M = mK$. In one hand Theorem \ref{thm Elk generated Ktheory} tells us that
        \[\mathrm{K}_{d+1}(L_{pnr}) = \left\langle \mathrm{N}_{d+1}(K'/K) \, | \, K' \in \mathcal{E}_{m/l_{pnr}} \right\rangle.\]
        On the other hand, for each $k' \in \mathcal{E}^0_{m/l_{pnr}}$ the total ramification degree of $lk'/k'$ strictly divides that of $l/k$. Indeed, we can see that $[mk':k'] \cdot e_{tot}(lk'/k')$ divides $e_{tot}(l/k)$. Then $e_{tot}(l/k) =e_{tot}(lk'/k')$ would imply $m=k'$ which gives a contradiction because $e_{tot}(l/k) = \ell \cdot e_{tot}(l/m)$. So we can apply our induction hypothesis to deduce that for every $K' \in \mathcal{E}_{m/l_{pnr}}$ the quotient
        \[ \mathrm{K}_{d+1}(K') \, \big/ \, \left\langle \mathrm{N}_{d+1}(K'L/K') , \mathrm{N}_{d+1}(Z \otimes_K K'/K')\right\rangle\]
        is $\chi_{K}(Z,E)^{2 s_{l/k}}$-torsion. Putting these facts together we deduce that 
        \[\mathrm{K}_{d+1}(K) \, \big/ \, \left\langle \mathrm{N}_{d+1}(L/K) , \mathrm{N}_{d+1}(Z/K) \right\rangle\]
        is $\chi_K(Z,E)^{2s_{l/k} +2 }$-torsion. \par 
        We no longer assume $i_{\leq j}(C) =1$. Let $k_1, \cdots ,k_n$ be finite extensions of $k$ such that for every $ r \in \{1, \cdots , n \}$ the curve $C$ has a $k_r$-point, and
        \[ i_{\leq j}^{ram}(C) = \mathrm{gcd}\left( e_{\leq j}(k_r/k) \, | \, r \in \{1, \cdots , n\} \right).\]
        For each index $r$, denote by $k_{r,pnr}$ the maximal subextension of $k_r/k$ such that $k_{r,pnr}/k$ is purely unramified, and set $K_{r,pnr} := k_{r,pnr} K$. Thanks to Theorem \ref{thm chi2 torsion} we know that the quotient
        \[ \mathrm{K}_{d+1}(K) \, \big/ \, \left\langle \mathrm{N}_{d+1}(K_{r,pnr}/K) , \mathrm{N}_{d+1}(Z/K)\right\rangle\]
        is $\chi_K(Z,E)^2$-torsion for every $r$. For each $r$ let $m_r/k$ be the maximal subextension of $k_r/k$ such that $m_r/k$ is $i$-unramified for every $i \in \{j+1, \cdots ,d \}$, and set $M_r := m_rK$. Note that $k_{r,pnr}$ is contained in $m_r$ and  $[m_r:k] = [k_r:k]_j$. A restriction-corestriction argument shows that 
        \[ \mathrm{K}_{d+1}(K_{r,pnr}) / \mathrm{N}_{d+1}(M_{r}/K_{r,pnr})\]
        is $[m_r :k_{r,pnr}]$-torsion. Note that $[m_r: k_{r,pnr}] = e_{\leq j}(k_r/k)$. We deduce that the quotient
        \[\mathrm{K}_{d+1}(K) \, \big/ \, \left\langle \mathrm{N}_{d+1}(M_1/K), \cdots , \mathrm{N}_{d+1}(M_n/K) , \mathrm{N}_{d+1}(Z/K)\right\rangle\]
        is $i_{\leq j}^{ram}(C) \cdot \chi_K(Z,E)^2$-torsion. Since over $m_r$ we have $i_{\leq j}(C \otimes_k m_r) =1$ we can apply the first case to see that 
        \[\mathrm{K}_{d+1}(M_r) \, \big/ \, \left\langle \mathrm{N}_{d+1}(M_r L/M_r) , \mathrm{N}_{d+1}(Z \otimes_K M_i/M_i)\right\rangle\]
        is $\chi_K(Z,E)^{2 s_{l/k} +2}$-torsion. Then 
        \[\mathrm{K}_{d+1}(K) \, \big/ \, \left\langle \mathrm{N}_{d+1}(L/K) , \mathrm{N}_{d+1}(Z/K)\right\rangle\]
        is $i_{\leq j}^{ram}(C) \cdot \chi_K(Z,E)^{2 s_{l/k} +4}$-torsion.
    \end{proof}
    \noindent As corollaries, we get the following results about the $C_{j+1}^{d+1}$ property.
    \begin{cor}
        Let $K$ be the funciton field of a smooth projective curve $C$ defined over a $d$-local field $k$ such that the $1$-local residue field of $k$ is $p$-adic and $j \in \{0, \cdots , d \}$. Then, for every $m,n \geq 1$ and hypersurface $Z$ in $\mathbf{P}^{n}_K$ of degree $m$ such that $m^{j+1} \leq n$, the quotient $\mathrm{K}_{d+1}(K) / \mathrm{N}_{d+1}(Z/K)$ is $i_{\leq d-j}^{ram}(C)$-torsion
    \end{cor}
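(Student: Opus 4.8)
The plan is to reduce the statement to a single application of Theorem \ref{thm iramchi torsion}, with its ramification parameter set equal to $d-j$, so that the resulting torsion bound is exactly $i_{\leq d-j}^{ram}(C)$ once two things are arranged: that the relevant Euler characteristic equals $1$, and that the auxiliary norm group $\mathrm{N}_{d+1}(L/K)$ is absorbed into $\mathrm{N}_{d+1}(Z/K)$. Concretely, writing $j' := d-j$, I would first produce a finite \emph{Galois} extension $l/k$ that is not $i$-ramified for any $i \in \{j'+1, \dots, d\} = \{d-j+1, \dots, d\}$ and such that $Z(lK) \neq \emptyset$, set $L := lK$, and then invoke Theorem \ref{thm iramchi torsion} (when $j' \geq 1$, i.e.\ $j \leq d-1$) or Theorem \ref{thm chi2 torsion} (in the boundary case $j=d$, where $j'=0$ and $i_{\leq 0}^{ram}(C)=1$).

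Two elementary reductions come first. Since $m^{j+1} \leq n$ and $m \geq 1$ we have $m \leq m^{j+1} \leq n$, so the resolution $0 \to \mathcal{O}_{\mathbf{P}^n_K}(-m) \to \mathcal{O}_{\mathbf{P}^n_K} \to \iota_* \mathcal{O}_Z \to 0$ gives $\chi_K(Z, \mathcal{O}_Z) = 1$, exactly as in the proof of Theorem \ref{thm Cd1d1}; in particular the factor $\chi_K(Z,\mathcal{O}_Z)^{2 s_{l/k}+4}$ appearing in Theorem \ref{thm iramchi torsion} equals $1$. Next, if $Z(L) \neq \emptyset$ then any $L$-point factors through a closed point $z \in Z$ with $\kappa(z) \hookrightarrow L$, so by $\mathrm{N}_{L/K} = \mathrm{N}_{\kappa(z)/K} \circ \mathrm{N}_{L/\kappa(z)}$ we get $\mathrm{N}_{d+1}(L/K) = \mathrm{N}_{L/K}(\mathrm{K}_{d+1}(L)) \subseteq \mathrm{N}_{\kappa(z)/K}(\mathrm{K}_{d+1}(\kappa(z))) \subseteq \mathrm{N}_{d+1}(Z/K)$, whence $\left\langle \mathrm{N}_{d+1}(L/K), \mathrm{N}_{d+1}(Z/K)\right\rangle = \mathrm{N}_{d+1}(Z/K)$. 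Feeding this and $E = \mathcal{O}_Z$ into Theorem \ref{thm iramchi torsion} with parameter $j'$ then gives that $\mathrm{K}_{d+1}(K)/\mathrm{N}_{d+1}(Z/K)$ is $i_{\leq d-j}^{ram}(C)$-torsion, which is the claim.

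The heart of the argument is therefore producing the extension $l$. Let $k^{\sharp}$ be the maximal algebraic extension of $k$, inside a fixed separable closure, that is not $i$-ramified for any $i \in \{d-j+1, \dots, d\}$; equivalently, $k^{\sharp}$ leaves the top $j$ valuations of the henselian tower unramified while allowing arbitrary ramification and residual extensions at the remaining $d-j$ valuations. I would check that its $j$-th residue field $(k^{\sharp})^{(j)}$ is the separable closure of the $(d-j)$-local field $k^{(j)}$, hence (being of characteristic zero when $j \leq d-1$) algebraically closed, so in particular a $C_0$ field; iterating Lang's theorem up the $j$ unramified henselian layers $(k^{\sharp})^{(j)} \subset \cdots \subset (k^{\sharp})^{(0)} = k^{\sharp}$ then shows $k^{\sharp}$ is a $C_j$ field, and Tsen--Lang gives that the function field $k^{\sharp}(C)$ of $C_{k^{\sharp}}$ is $C_{j+1}$. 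Since $m^{j+1} \leq n$, the hypersurface $Z_{k^{\sharp}(C)}$ acquires a rational point; as $k^{\sharp}$ is the filtered union of the finite subextensions $l_0/k$ that are not $i$-ramified for $i \in \{d-j+1, \dots, d\}$, this point is already defined over some such finite $l_0 K$. Replacing $l_0$ by its Galois closure $l$ over $k$ preserves the non-ramification condition (a compositum of conjugate extensions unramified at a given valuation remains unramified there) and only enlarges the field of definition, producing the Galois $l/k$ required above. The boundary case $j=d$ is identical with $k^{\sharp} = k^{pnr}$ and Theorem \ref{thm chi2 torsion} in place of Theorem \ref{thm iramchi torsion}.

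The step I expect to be the main obstacle is the structural claim that $k^{\sharp}$ is a genuine $C_j$ field (equivalently, that $k^{\sharp}(C)$ is $C_{j+1}$): one must verify that making precisely the top $j$ valuations unramified collapses the $(d-j)$-local residue field to its separable closure, thereby removing the residue-characteristic pathologies, and then that Lang's theorem may be iterated along the \emph{henselian} rather than complete tower. This is the exact analogue, for intermediate $j$, of the assertion ``$L^{pnr}$ satisfies $C_{d+1}$'' used in the proof of Theorem \ref{thm Cd1d1}, and I would dispatch it by the same structure theory of higher local fields together with Tsen--Lang.
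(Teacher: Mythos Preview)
Your proposal is correct and follows essentially the same route as the paper: both define the maximal extension $k^{\sharp}$ (the paper writes $k^{\leq d-j}$) that is not $i$-ramified for $i\in\{d-j+1,\dots,d\}$, invoke Tsen--Lang to see that $k^{\sharp}(C)$ is $C_{j+1}$, descend the resulting point to a finite Galois $l/k$ with the same non-ramification constraints, and then apply Theorem~\ref{thm iramchi torsion} with parameter $d-j$ together with $\chi_K(Z,\mathcal{O}_Z)=1$ and the inclusion $\mathrm{N}_{d+1}(L/K)\subseteq\mathrm{N}_{d+1}(Z/K)$. Your write-up is in fact slightly more careful than the paper's in two places: you explain why passing to the Galois closure preserves the non-ramification condition, and you treat the boundary case $j=d$ (where the theorem's parameter would be $0$, outside its stated range) separately via Theorem~\ref{thm chi2 torsion}, observing that $i_{\leq 0}^{ram}(C)=1$.
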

    \begin{proof}
        Let $Z$ be a hypersurface of $\mathbf{P}^n_k$ of degree $m$ with $m^{j+1} \leq n$. Let $k^{\leq d -j}/k$ be the maximal extension of $k$ that is not $i$-ramified for any $i \in \{d-j+1, \cdots,d\}$. We can combine Tsen's Theorem \cite[Theorem 2a]{Nagata1957NotesLang} and Lang's Theorem \cite[Theorem 12]{Lan1952QAlgClosure} to deduce that the field $k^{\leq d-j}(C)$ satisfies $C_{j+1}$. Then there exists a finite Galois extension $l/k$ such that $Z(lK)\neq \emptyset$ and $l/k$ is not $i$-ramified for every $i \in \{d-j+1,\cdots, d\}$. Theorem \ref{thm iramchi torsion} ensures that the quotient
        \[\mathrm{K}_{d+1}(K) \big/ \left\langle \mathrm{N}_{d+1}(lK) , \mathrm{N}_{d+1}(Z/K) \right\rangle = \mathrm{K}_{d+1}(K) \big/ \left\langle\mathrm{N}_{d+1}(Z/K) \right\rangle\]
        is $i_{\leq d-j}^{ram}(C) \cdot \chi_K(Z,E)^{2s_{l/k}+4}$-torsion for any coherent sheaf $E$. But since $m\leq n$ we can deduce from \cite[Theorem III 5.1]{Hartshorne1983AlgebraicGeometry} and the exact sequence
        \[0 \to \mathcal{O}_{\mathbf{P}^n_K}(-d) \to \mathcal{O}_{\mathbf{P}^n_K} \to \iota_* \mathcal{O}_Z \to 0\]
        where $\iota: Z \to \mathbf{P}^n_K$ stands for the closed immersion, that $\chi_K(Z,\mathcal{O}_Z) = 1$. Thus, $\mathrm{K}_{d+1}(K) \big/ \left\langle\mathrm{N}_{d+1}(Z/K) \right\rangle$ is $i^{ram}_{\leq d-j}(C)$-torsion.
    \end{proof}
    \begin{cor}
        Let $K$ be the funciton field of a smooth projective curve $C$ defined over a $d$-local field $k$ such that the $1$-local residue field of $k$ is $p$-adic and $j \in \{0,\cdots, d\}$. Assume that $i_{\leq d-j}^{ram} (C) = 1$ Then, for every $m,n \geq 1$ and hypersurface $Z$ in $\mathbf{P}^{n}_K$ of degree $m$ such that $m^{j+1} \leq n$, we have $\mathrm{K}_{d+1}(K)  =  \mathrm{N}_{d+1}(Z/K)$.
    \end{cor}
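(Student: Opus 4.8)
The plan is to obtain this statement as an immediate specialisation of the preceding corollary, since all of the substantial work has already been carried out there. First I would note that the hypotheses here are exactly those of the previous corollary together with the extra assumption $i_{\leq d-j}^{ram}(C) = 1$. Applying that corollary to the given hypersurface $Z$ in $\mathbf{P}^n_K$ of degree $m$ with $m^{j+1} \leq n$, I obtain that the quotient $\mathrm{K}_{d+1}(K)/\mathrm{N}_{d+1}(Z/K)$ is $i_{\leq d-j}^{ram}(C)$-torsion. No new geometric or cohomological input is needed for this step beyond checking that the ambient framework coincides verbatim (function field of a smooth projective curve over a $d$-local field whose first residue field is $p$-adic), which it does.

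The second and final step is to feed in the hypothesis $i_{\leq d-j}^{ram}(C) = 1$. A group that is $1$-torsion has every element annihilated by the integer $1$, and is therefore trivial. Consequently $\mathrm{K}_{d+1}(K)/\mathrm{N}_{d+1}(Z/K)$ vanishes, which is precisely the asserted equality $\mathrm{N}_{d+1}(Z/K) = \mathrm{K}_{d+1}(K)$.

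There is no genuine obstacle at this stage. The entire difficulty is encapsulated in the preceding corollary, whose proof passes through Theorem \ref{thm iramchi torsion}, uses the solvability of $Z$ over the maximal non-$i$-ramified extension (via the theorems of Tsen and Lang) to produce a suitable Galois extension $l/k$ with $Z(lK) \neq \emptyset$ and $l/k$ not $i$-ramified for $i \in \{d-j+1,\cdots,d\}$, and invokes the vanishing $\chi_K(Z,\mathcal{O}_Z) = 1$ for hypersurfaces with $m \leq n$ to collapse the Euler-characteristic factor $\chi_K(Z,E)^{2s_{l/k}+4}$. Once that torsion bound of $i_{\leq d-j}^{ram}(C)$ is in hand, the present corollary is simply the case where this bound is set equal to $1$.
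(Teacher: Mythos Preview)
Your proposal is correct and matches the paper's treatment: the paper gives no explicit proof for this corollary, since it is the immediate specialisation of the preceding one under the hypothesis $i_{\leq d-j}^{ram}(C)=1$, exactly as you describe.
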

    \section*{Acknowledgemnts}
    I would like to thank Diego Izquierdo for his advice, suggestions and support. Without him this work would not have seen the light of day.
    \bibliographystyle{alpha}
    \bibliography{ref.bib}
\end{document}